\numberwithin{equation}{section}
\theoremstyle{plain}
\newtheorem{cor}[equation]{Corollary}
\newtheorem{lemma}[equation]{Lemma}
\newtheorem{proposition}[equation]{Proposition}
\newtheorem{thm}[equation]{Theorem}
\newtheorem{qns}[equation]{Question}
\theoremstyle{definition}
\newtheorem{definition}[equation]{Definition}
\newtheorem{example}[equation]{Example}
\newcommand{\dlabel}[1]{\ifmmode \text{\ttfamily \upshape [#1] } \else
{\ttfamily \upshape [#1] }\fi \label{#1}}
\renewcommand{\det}{\operatorname{det} }
\newcommand{\gen}[1]{\left < #1 \right >}
\newcommand{\Aut}{\operatorname{Aut} }
\newcommand{\im}{\operatorname{Im} }
\newcommand{\Ker}{\operatorname{Ker} }
\newcommand{\Hol}{\operatorname{Hol} }
\begin{document}
\setcounter{page}{1}
\title[On $\lambda$-homomorphic skew braces]
{On $\lambda$-homomorphic skew braces}

\author{Valeriy G. Bardakov}
\address{Sobolev Institute of Mathematics, 4 Acad. Koptyug avenue, 630090, Novosibirsk, Russia.}
\address{Novosibirsk State  University, 2 Pirogova Street, 630090, Novosibirsk, Russia.}
\address{Novosibirsk State Agrarian University, Dobrolyubova street, 160, Novosibirsk, 630039, Russia.}
\address{Regional Scientific and Educational Mathematical Center of Tomsk State University, 36 Lenin Ave., Tomsk, Russia.}
\email{bardakov@math.nsc.ru}

\author{Mikhail V. Neshchadim}
\address{Sobolev Institute of Mathematics, 4 Acad. Koptyug avenue, 630090, Novosibirsk, Russia.}
\address{Novosibirsk State  University, 2 Pirogova Street, 630090, Novosibirsk, Russia.}
\address{Novosibirsk State Agrarian University, Dobrolyubova street, 160, Novosibirsk, 630039, Russia.}
\address{Regional Scientific and Educational Mathematical Center of Tomsk State University, 36 Lenin Ave., Tomsk, Russia}
\email{neshch@math.nsc.ru}

\author{Manoj K. Yadav}
\address{Harish-Chandra Research Institute, HBNI, Chhatnag Road, Jhunsi, Allahabad-211 019, India}
\email{myadav@hri.res.in}

\subjclass[2010]{16T25, 81R50}
\keywords{Skew left braces, Left braces, Yang-Baxter equation}

\begin{abstract}
For a skew left brace $(G, \cdot, \circ)$, the map $\lambda : (G, \circ) \to \Aut (G, \cdot),~~a \mapsto \lambda_a,$
where $\lambda_a(b) = a^{-1} \cdot (a \circ b)$ for all $a, b \in G$,
is a group homomorphism. Then $\lambda$ can also be viewed as a map from $(G, \cdot)$ to $\Aut (G, \cdot)$, which, in general, may not be a homomorphism. We study skew left braces $(G, \cdot, \circ)$ for which $\lambda : (G, \cdot) \to \Aut (G, \cdot)$ is a homomorphism. Such skew left braces will be called $\lambda$-homomorphic.   We formulate necessary and sufficient conditions under which a given homomorphism $\lambda :  (G, \cdot) \to \Aut (G, \cdot)$ gives rise to  a skew left brace, which, indeed, will be $\lambda$-homomorphic. As an application, we construct skew left braces when $(G, \cdot)$ is either a free group or a free abelian group. We prove that any $\lambda$-homomorphic skew left brace is an extension of a trivial skew brace by a trivial skew brace. Special emphasis is given on $\lambda$-homomorphic skew left brace for which the image of $\lambda$ is cyclic. A complete characterization of such skew left braces on the free abelian group of rank two is obtained.

 \end{abstract}
\maketitle

\section{Introduction}

A  triple $(G, \cdot, \circ)$, where $(G, \cdot)$ and $(G, \circ)$ are  groups,    is said to be a \emph{skew left brace} if
 \begin{equation}
 g_1 \circ (g_2 \cdot g_3) =  (g_1 \circ g_2) \cdot g_1^{-1} \cdot  (g_1 \circ g_3)
 \end{equation}
 for all $g_1, g_2, g_3 \in G$, where $ g_1^{-1}$ denotes the  inverse of $g_1$ in $(G, \cdot)$. We call  $(G, \cdot)$ the \emph{additive group} and $(G, \circ)$ the \emph{multiplicative  group} of the skew left brace $(G, \cdot, \circ)$. A skew left brace $(G, \cdot, \circ)$ is said to be a \emph{left brace} if $(G, \cdot)$ is an abelian group.  In this article we mainly consider left braces and  skew left braces. So, we'll mostly suppress the word `left' and only say brace(s) or skew brace(s).

  The concept of  braces was introduced by Rump \cite{R2007} in 2007 in connection with non-degenerate involutive set theoretic solutions of the quantum Yang-Baxter equation. Thereafter the subject received a tremendous attention of the mathematical community; see \cite{BCJO18, FC2018, WR2019, AS2018} and the references therein.  Interest in the study of set theoretic solutions of the quantum Yang-Baxter equations was intrigued by the paper \cite{D1992}  of Drinfeld, published in 1992.  The concept of skew  braces was introduced by Guarnieri and Vendramin \cite{GV2017} in 2017 in connection with non-involutive non-degenerate set theoretic solutions of the quantum Yang-Baxter equation. For more on skew braces see \cite{CSV19, JKAV2019, SV}. A detailed survey on problems on skew braces is presented in \cite{LV2019}.

For a skew left brace $(G, \cdot, \circ)$, it was  proved in \cite{GV2017} that the map
$$
\lambda  :  (G, \circ) \to \Aut (G, \cdot),~~a \mapsto \lambda_a
$$
is a group homomorphism, where $\Aut (G, \cdot)$ denotes the automorphism  group of $(G, \cdot)$ and $\lambda_a$ is given by $\lambda_a(b) = a^{-1} \cdot (a \circ b)$ for all $a, b \in G$.
Obviously, $\lambda$ can also be viewed as a map from $(G, \cdot)$ to $\Aut (G, \cdot)$, which, in general, may not be a homomorphism. 
We say that  $(G, \cdot, \circ)$ is a $\lambda$-{\it homomorphic skew left brace} if the map $\lambda : (G, \cdot) \to \Aut (G, \cdot)$ is a homomorphism. A  $\lambda$-homomorphic skew left brace $(G, \cdot, \circ)$ is said to be a $\lambda$-{\it cyclic skew left brace} if the image $\im \lambda$ is a cyclic subgroup of $\Aut (G, \cdot)$.  
$\lambda$-homomorphic skew braces of finite order were studied in \cite{CC}.

In the present article our main focus is on the study of the following problem: For a given group $(G, \cdot)$, define a binary operation `$\circ$' such that $(G, \cdot, \circ)$ is a non-trivial skew  brace, that is,  `$\cdot$' and `$\circ$' do not coincide.
We investigate $\lambda$-homomorphic and $\lambda$-cyclic skew braces (mainly of infinite order) and show  that any $\lambda$-homomorphic skew brace is metatrivial, that is, an extension of a trivial skew brace by a trivial skew brace. 

Description of all skew braces of given order is much more complicated than description of all groups of given order. For example, there are 51 groups of order 32, but there are 1223061  skew   braces and 25281 braces  of order 32 (see \cite{BNY}). It is very  difficult  to describe all skew braces even with a given additive group.  So the construction of certain specific type of skew braces is highly desirable. Among other things, we construct various  skew braces of infinite order, mainly on free groups and free abelian groups. 

The paper is organised as follows. In Section \ref{lambda} we recall a theorem from \cite{GV2017} which provides a connection between skew braces and regular subgroups. Theorem \ref{t1} gives necessary and sufficient conditions under which the homomorphism $\lambda : (G, \cdot) \to \Aut (G, \cdot)$ gives rise to a regular subgroup $H_{\lambda}$ of the holomorph of $(G, \cdot)$, and hence defines a $\lambda$-homomorphic skew  brace. As a consequence, it follows that if the group $(G, \cdot)$ is non trivial and the kernel of  $\lambda$ is trivial, then $H_{\lambda}$ can not be a group. On the other side, we give an example  of a skew brace for which $\lambda$ is anti-homomorphism with trivial kernel. We find the structure of $\lambda$-homomorphic skew braces and prove that any  $\lambda$-homomorphic skew brace is an  extension of a trivial skew l brace by a trivial skew  brace. At the end we construct $\lambda$-homomorphic skew braces on free and free abelian groups of finite ranks.

 In Section \ref{lambda-cyc} we study $\lambda$-cyclic skew braces.  We introduce homogeneous presentations and homogeneous groups. Homogeneous groups include, in particular, free goups,  free solvable groups,  free nilpotent groups, groups of knots and so on. In Theorem \ref{lcb} we prove that if $(G, \cdot)$ is a homogeneous group, then a homomorphism $\lambda : (G, \cdot) \to \Aut (G, \cdot)$ with cyclic image defines a regular subgroup and hence a $\lambda$-cyclic skew  brace. Using this theorem we construct skew braces on free group.

 In Section \ref{cyc-ab} we construct $\lambda$-cyclic  braces on free abelian groups. In particular, we prove that on the free abelian group of rank 2, there are only three non-isomorphic $\lambda$-cyclic braces. We also construct a brace on any free abelian group  of finite rank.

 In Section \ref{sym} we take up the concept of symmetric skew braces, which was  introduced by  Childs \cite{Chi} (under the name bi-skew brace) and studied by Caranti \cite{Ca}. We give necessary and sufficient conditions under which a skew brace is  symmetric. We give examples of symmetric braces on free abelian groups.  Theorem \ref{t6.6} says  that every $\lambda$-cyclic skew brace is symmetric. We construct a skew brace by exact factorization of free product of two groups, and prove that the  skew brace constructed on the  free product $B * C$ is $\lambda$-homomorphic  if $B$ is abelian. Example of a skew brace with finitely generated additive group and infinitely generated multiplicative group is presented.  An example of skew brace with non-nilpotent metabelian  additive group and infinitely generated free abelian multiplicative group is also presented. At the end of this section we establish the existence of  a non-trivial skew brace whose additive group is the group of a tame knot.

It is well known that if $(A, +, \circ)$ is a finite brace, then the multiplicative group $(A, \circ)$ is solvable. Nasybullov \cite{Nas} proved that this is not true in general.  Moreover, a two sided brace was constructed in \cite{CSV19} whose multiplicative group  contains a non-abelian free group.   In Section \ref{const}  we present a similar example  and prove (Proposition \ref{p7.1}) that there is a two-sided brace $(A, +, \circ)$ whose multiplicative group $(A,  \circ)$ contains a non-abelian free subgroup. Finally we prove that if  $(A, + )$ is an abelian group,  $B$  its subgroup of index  2, then there is a brace $(A, + , \circ)$ in  which   $(A, \circ)$ is isomorphic to the semi-direct product $B \rtimes \mathbb{Z}_2$. 

We formulate some open problems,  which naturally arise during our study. 

When the additive group of a skew brace is non-abelian, we'll mostly suppress `$\cdot$' and use $ab$ for $a \cdot b$. For  braces,  we denote the additive operation `$\cdot$' by `$+$'. For a skew brace $(G, \cdot, \circ)$, the inverse of $a \in G$ with respect to `$\circ$' is denoted by $\bar a$.


\section{$\lambda$-homomorphic skew braces} \label{lambda}

In this section we provide a structure of  $\lambda$-homomorphic skew brace and construct various such skew braces.

Let $(G, \cdot, \circ)$ be a skew brace. Then, as proved in \cite{GV2017}, the map
$$
\lambda : (G, \circ) \to \Aut (G, \cdot),~~a \mapsto \lambda_a,
$$
where $\lambda_a(b) = a^{-1} (a \circ b)$, is a group homomorphism.  The inverse $\bar a$ of $a \in G$ with respect to `$\circ$'  is given by $\lambda_a^{-1} (a^{-1})$.

Let $G$ be a group. The holomorph of $G$ is the group $\Hol G := \Aut  G \ltimes G$, in which the product is given by
$$
(f,a)(g,b)=(fg,af(b))
$$
for all $a,b \in G$ and $f,g \in \Aut G$.
Any subgroup $H$ of $\Hol  G$ acts on $G$ as follows
$$
(f,a) \cdot b = af(b),\quad a,b \in G,\,\, f \in \Aut G.
$$
A subgroup $H$ of $\Hol  G$ is said to be {\it regular} if for each $a \in A$
there exists a unique $(f,x) \in H$ such that $xf(a)=1$. It is  equivalent to the fact that
the action of $H$ on $G$ is free and transitive. Let $\pi_2 : \Hol G \to G$ denote the projection map from $\Hol G$ onto the second component $G$.

The following theorem from \cite[Theorem 4.2]{GV2017} provides a connection between skew braces and regular subgroups.

\begin{thm}\label{gv2017}
Let $(G, \cdot, \circ)$ be skew  brace. Then $\left\{ \, (\lambda_a, a)  \mid  a \in G  \,  \right\}$
is a regular subgroup of $\Hol G$, where $\lambda_a(b) = a^{-1}(a \circ b)$ for all $b \in A$.

Conversely, if $(G, \cdot)$ is a group and $H$ is a regular subgroup of $\Hol (G, \cdot)$, then $(G, \cdot, \circ)$ is a skew  brace such that $(G, \circ) \cong H$, where $a \circ b=a f(b)$
with $(\pi_2|_H)^{-1}(a)=(f, a)\in H$.
\end{thm}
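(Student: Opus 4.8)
The plan is to treat the two directions separately, in each exploiting the semidirect-product multiplication $(f,a)(g,b) = (fg, af(b))$ in $\Hol G$ together with the fact (recalled above from \cite{GV2017}) that $\lambda : (G,\circ) \to \Aut(G,\cdot)$ is a group homomorphism. For the forward direction, set $H = \{(\lambda_a, a) \mid a \in G\}$. First I would check that $H$ is a subgroup: one computes $(\lambda_a, a)(\lambda_b,b) = (\lambda_a\lambda_b,\, a\lambda_a(b))$, and here $a\lambda_a(b) = a\cdot a^{-1}(a\circ b) = a\circ b$ while $\lambda_a\lambda_b = \lambda_{a\circ b}$ by multiplicativity of $\lambda$, so the product equals $(\lambda_{a\circ b}, a\circ b) \in H$; since also $\lambda_1 = \Id$ and $(\lambda_a,a)^{-1} = (\lambda_{\bar a}, \bar a)$, $H$ is a subgroup. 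For regularity I would note that the action of a general element of $H$ on a point is $(\lambda_y, y)\cdot a = y\lambda_y(a) = y\circ a$, so $(\lambda_y,y)\cdot a = 1$ holds precisely for $y = \bar a$, giving existence and uniqueness of the required element.

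For the converse, the key preliminary step is to deduce from regularity of $H$ that $\pi_2|_H : H \to G$ is a bijection: evaluating $(f,x)\in H$ at the identity gives $(f,x)\cdot 1 = x f(1) = x = \pi_2(f,x)$, so surjectivity of $\pi_2|_H$ is transitivity of the action on $G$ and injectivity is freeness. Hence for each $a \in G$ there is a unique element of $H$ with second coordinate $a$, which I write $(\lambda_a, a)$, and $a\circ b := a\lambda_a(b)$ is well defined. Since $\pi_2\big((\lambda_a,a)(\lambda_b,b)\big) = a\lambda_a(b) = a\circ b$, the operation $\circ$ is exactly the group structure of $H$ transported along the bijection $\pi_2|_H$; thus $(G,\circ)$ is a group with identity $1$ and $\pi_2|_H : H \to (G,\circ)$ is an isomorphism, and comparing first coordinates in $(\lambda_a,a)(\lambda_b,b) = (\lambda_{a\circ b}, a\circ b)$ also yields $\lambda_a\lambda_b = \lambda_{a\circ b}$. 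It remains to verify the skew brace identity, which is a direct computation: as each $\lambda_{g_1}$ lies in $\Aut(G,\cdot)$,
\[
g_1\circ(g_2 g_3) = g_1\lambda_{g_1}(g_2 g_3) = g_1\lambda_{g_1}(g_2)\lambda_{g_1}(g_3)
\]
and
\[
(g_1\circ g_2)\, g_1^{-1}\,(g_1\circ g_3) = g_1\lambda_{g_1}(g_2)\, g_1^{-1}\, g_1\lambda_{g_1}(g_3) = g_1\lambda_{g_1}(g_2)\lambda_{g_1}(g_3),
\]
so the two sides coincide.

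All the steps are routine manipulations inside $\Hol G$; the only point that needs genuine care is the converse, where one must first extract the concrete bijectivity of $\pi_2|_H$ from the abstract definition of a regular subgroup before the operation $\circ$ can even be written down, and then keep track of the fact that the automorphism $\lambda_a$ produced this way really is the first coordinate of the corresponding element of $H$ --- it is this identification that makes the verification of the brace axiom go through.
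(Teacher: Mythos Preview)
Your argument is correct and complete. Note, however, that the paper does not give its own proof of this statement: it is quoted verbatim as \cite[Theorem~4.2]{GV2017} and used as a black box, so there is no in-paper proof to compare against. What you have written is essentially the standard proof one finds in the cited reference, and every step checks out --- in particular your observation that $(\lambda_y,y)\cdot a = y\circ a$ makes both the subgroup and regularity verifications transparent, and your handling of the converse (first extracting bijectivity of $\pi_2|_H$ from regularity, then transporting the group structure) is exactly the right way to organise it.
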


\begin{definition}
A skew brace $(G, \cdot, \circ)$ is said to be a $\lambda$-{\it homomorphic skew brace} if the map $\lambda : (G, \cdot) \to \Aut (G, \cdot)$, defined  by  $\lambda_a(b) = a^{-1} (a \circ b)$, $a, b \in (G,\cdot)$, is a homomorphism.
\end{definition}

For  a  group  $G = (G, \cdot)$, we desire to define a homomorphism $\lambda : G \to \Aut G$ such that $(G, \cdot, \circ)$ is a skew brace, where `$\circ$' is defined by  $a \circ b = a \cdot \lambda_a (b)$ for all $a, b \in G$. The following result is instrumental in our motive.

\begin{thm} \label{t1}
Let  $G$ be a group, $\lambda : G \to \mathrm{Aut}\, G$ be a homomorphism of
 $G$ into the group of its automorphisms. Then the set
$$ H_{\lambda} := \left\{ \, (\lambda_a, a) \, | \, a \in G  \, \right\}$$
is a subgroup of  $\mathrm{Hol}\, G= \mathrm{Aut}\, G \ltimes G$
if and only if
\begin{equation} \label{inc}
[G, \lambda(G)] := \{ b^{-1} \lambda_a (b) \mid a, b \in G \} \subseteq \mathrm{Ker}\,\lambda.
\end{equation}
Moreover, if  $H_{\lambda}$ is a subgroup, then it is  regular, and therefore by Theorem \ref{gv2017} we get a skew brace $(G, \cdot, \circ)$, where `$\circ$' is defined by $a \circ b = a \lambda_a(b)$.
\end{thm}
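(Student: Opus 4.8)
The plan is to work directly inside $\Hol G$ and check the subgroup criterion for $H_\lambda$, extracting condition \eqref{inc} as the obstruction to closure. First I would write out the product of two typical elements of $H_\lambda$: for $a, b \in G$,
\[
(\lambda_a, a)(\lambda_b, b) = (\lambda_a \lambda_b,\ a\, \lambda_a(b)) = (\lambda_{ab},\ a\,\lambda_a(b)),
\]
using that $\lambda$ is a homomorphism. For $H_\lambda$ to be closed, this must equal $(\lambda_c, c)$ for some $c \in G$; the second coordinate forces $c = a\,\lambda_a(b)$, so the requirement is precisely
\[
\lambda_{ab} = \lambda_{a\,\lambda_a(b)} \quad\text{for all } a, b \in G.
\]
Since $\lambda$ is a homomorphism, $\lambda_{a\,\lambda_a(b)} = \lambda_a\,\lambda_{\lambda_a(b)}$ and $\lambda_{ab} = \lambda_a\,\lambda_b$, so after cancelling $\lambda_a$ this is equivalent to $\lambda_b = \lambda_{\lambda_a(b)}$, i.e. $\lambda_{b^{-1}\lambda_a(b)} = \lambda_b^{-1}\lambda_{\lambda_a(b)} = \mathrm{id}$, which is exactly $b^{-1}\lambda_a(b) \in \Ker\lambda$. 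This shows closure under multiplication is equivalent to \eqref{inc}. For the identity: $(\mathrm{id}, 1) = (\lambda_1, 1) \in H_\lambda$ automatically. For inverses: $(\lambda_a, a)^{-1} = (\lambda_a^{-1},\ \lambda_a^{-1}(a^{-1})) = (\lambda_{a^{-1}},\ \lambda_a^{-1}(a^{-1}))$; I would verify this has the form $(\lambda_c, c)$ with $c = \lambda_a^{-1}(a^{-1})$ by checking $\lambda_c = \lambda_{a^{-1}}$, which reduces again to an instance of \eqref{inc} (applied to the pair $a^{-1}, a^{-1}$, after a short manipulation), so no new condition is needed. This establishes the ``if and only if''.

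For the ``moreover'' part, assume $H_\lambda$ is a subgroup. Regularity means the action of $H_\lambda$ on $G$ by $(f, x)\cdot b = x f(b)$ is free and transitive; equivalently, for each $a \in G$ there is a unique $(f, x) \in H_\lambda$ with $x f(a) = 1$. Given the parametrization $H_\lambda = \{(\lambda_x, x) : x \in G\}$, the map $\pi_2|_{H_\lambda} : H_\lambda \to G$, $(\lambda_x, x) \mapsto x$, is a bijection. Then for a fixed $a$, I need a unique $x$ with $x\,\lambda_x(a) = 1$; but $x\,\lambda_x(a) = x \cdot x^{-1}(x \circ a) = x \circ a$ once we set $x \circ b := x\,\lambda_x(b)$, so the equation becomes $x \circ a = 1$, which has the unique solution $x = \bar a$ (the $\circ$-inverse of $a$). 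Uniqueness follows because $(G, \circ)$ is a group once we know $H_\lambda \cong (G,\circ)$; alternatively one checks freeness directly: $(\lambda_x, x) \cdot b = (\lambda_y, y)\cdot b$ for all $b$ forces $x = y$ by evaluating at $b = 1$. Hence $H_\lambda$ is regular, and Theorem \ref{gv2017} then yields the skew brace structure, with the multiplicative operation read off as $a \circ b = a\,\lambda_a(b)$, matching the stated formula.

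The main obstacle is not any single computation — they are all short — but rather being careful that the cancellations of $\lambda_a$ are genuinely reversible, so that \eqref{inc} is both necessary and sufficient rather than merely necessary; the key point is that $\lambda$ being a homomorphism on $G$ is used crucially to rewrite $\lambda_{a\lambda_a(b)}$ as $\lambda_a\lambda_{\lambda_a(b)}$, and this is what converts the closure condition into the clean subgroup membership statement $[G, \lambda(G)] \subseteq \Ker\lambda$. A secondary point worth stating explicitly is that $[G,\lambda(G)]$ as defined is just a subset (the set of elements $b^{-1}\lambda_a(b)$), and one should note it need not a priori be a subgroup; but since $\Ker\lambda$ is a subgroup, containment of the generating set is equivalent to containment of the subgroup it generates, so no issue arises.
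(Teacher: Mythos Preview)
Your argument is correct and follows essentially the same route as the paper: the same product computation in $\Hol G$, the same reduction of closure to $\lambda_{b^{-1}\lambda_a(b)}=\mathrm{id}$, and the same verification that inverses impose no new condition (the paper writes this via the identity $\lambda(\lambda_{a^{-1}}(a^{-1}))=\lambda_{a^{-1}}$, which is exactly your instance of \eqref{inc} at the pair $a^{-1},a^{-1}$). For regularity the paper solves $a\lambda_a(b)=1$ explicitly as $a=\lambda_{b^{-1}}(b^{-1})$, whereas you transport the group structure along the bijection $\pi_2|_{H_\lambda}$ and invoke existence and uniqueness of $\circ$-inverses; these are the same computation viewed from two sides.

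One small slip: your ``alternative'' check, that $(\lambda_x,x)\cdot b=(\lambda_y,y)\cdot b$ \emph{for all} $b$ forces $x=y$, is faithfulness of the action, not freeness. It is harmless here since your primary argument via $(G,\circ)$ already gives both existence and uniqueness of the solution to $x\circ a=1$, but if you keep the remark you should drop the ``for all $b$'' and instead note that transitivity is immediate from $(\lambda_x,x)\cdot 1 = x$ (so the orbit of $1$ is all of $G$) and freeness from the uniqueness just established.
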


\begin{proof}
Suppose that  $H_{\lambda}$ is a subgroup of  $\mathrm{Hol}\, G$. Then, for all $a, b \in G $, the product
$$ (\lambda_a, a)(\lambda_b, b)=(\lambda_a \lambda_b, a \lambda_{a} (b)) =(\lambda_{ab}, a \lambda_a (b)) \in H_{\lambda},$$
which implies that
\begin{equation} \label{2}
\lambda(a \lambda_a(b))=\lambda_{a b}.
\end{equation}
Since $\lambda$ is a homomorphism, we get
\begin{equation} \label{3}
\lambda(\lambda_a(b))=\lambda_b,
\end{equation}
which is equivalent to
\begin{equation} \label{4}
\lambda(b^{-1} \lambda_a(b)) = id.
\end{equation}
Hence $[G, \lambda(G)]  \subseteq \mathrm{Ker}\,\lambda$.

Conversely, suppose that \eqref{inc} holds. Then  (\ref{4}), (\ref{3}) and (\ref{2}) hold, which imply that
$H_{\lambda}$ is closed under the operation of $\mathrm{Hol}\, G$. It remains  to check that  $H_{\lambda}$ is closed under inversion in $\mathrm{Hol}\, G$.
Notice that for   $(f, x)\in \mathrm{Hol}\, G$, we have  $(f, x)^{-1}=(f^{-1}, f^{-1}(x^{-1})).$
So
$$
(\lambda_a, a)^{-1}=\left( \lambda_a^{-1}, \lambda_a^{-1}(a^{-1})    \right).
$$
By \eqref{3}, we have   $\lambda\left( \lambda_{a^{-1}}(a^{-1})  \right)=\lambda_{a^{-1}}$.
Since $\lambda$ is a homomorphism, we finally get
$$
(\lambda_a, a)^{-1}=\left( \lambda_{a^{-1}}, \lambda_{a^{-1}}(a^{-1})  \right)=
\left( \lambda ( \lambda_{a^{-1}}(a^{-1}) ), \lambda_{a^{-1}}(a^{-1})  \right) \in H_{\lambda}.
$$
This proves that $H_{\lambda}$ is a subgroup of $\mathrm{Hol}\, G$.

Now we prove the final assertion. Let  $H_{\lambda}$ be a subgroup of $\mathrm{Hol}\, G$. By the definition of $H_{\lambda}$, the projection
$$
\pi_2 : H_{\lambda} \to G, \quad (\lambda_a, a) \mapsto a, \quad a \in G,
$$
is a bijection. Hence, we only need to show  that  the action of $H_{\lambda}$ on  $G$ is free and  transitive, i.e., for any  $b \in G$,
there exists a unique element $(\lambda_a, a) \in H_{\lambda}$ such that
\begin{equation} \label{5}
a \lambda_a(b)=1.
\end{equation}
If there exists an element $(\lambda_a, a) \in H_{\lambda}$ such that \eqref{5} holds, then, by   \eqref{2},  $\lambda_{ab}=id$. Thus $a=xb^{-1}$ for some $x \in \mathrm{Ker}\,\lambda$. Given $a = x b^{-1}$ for any $x \in \mathrm{Ker}\,\lambda$, $a \lambda_a(b)=1$ if and only if $x b^{-1}= \lambda_{b^{-1}}(b^{-1})$, that is, $a=\lambda_{b^{-1}}(b^{-1})$.
This establishes the existence as well as the uniqueness of the element  $(\lambda_a, a) \in H_{\lambda}$ for a given $b \in G$. Hence $H_{\lambda}$ is regular, and the proof is complete.
\end{proof}

\begin{cor} \label{c1}
If  $G\neq 1$ and $\mathrm{Ker}\,\lambda=1$, then
$H_{\lambda}$ is not a group.
\end{cor}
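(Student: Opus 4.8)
The plan is to argue by contradiction, leaning entirely on the equivalence established in Theorem \ref{t1}. Suppose $H_{\lambda}$ were a subgroup of $\mathrm{Hol}\, G$. Then condition \eqref{inc} of Theorem \ref{t1} must hold, i.e.\ $[G, \lambda(G)] = \{\, b^{-1}\lambda_a(b) \mid a,b \in G\,\} \subseteq \mathrm{Ker}\,\lambda$.

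Next I would feed in the hypothesis $\mathrm{Ker}\,\lambda = 1$. This forces $b^{-1}\lambda_a(b) = 1$ for all $a, b \in G$, which says precisely that $\lambda_a = \mathrm{id}_G$ for every $a \in G$. Hence $\lambda$ is the trivial homomorphism and $\mathrm{Ker}\,\lambda = G$. Combined with $\mathrm{Ker}\,\lambda = 1$, this yields $G = 1$, contradicting the hypothesis $G \neq 1$. Therefore $H_{\lambda}$ cannot be a group.

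There is really no obstacle here: the statement is a direct logical consequence of the ``only if'' direction of Theorem \ref{t1} together with the triviality of the kernel, so the argument is a short two-line deduction. The only thing to be careful about is making explicit the step $[G,\lambda(G)] \subseteq \mathrm{Ker}\,\lambda = 1 \implies \lambda \equiv \mathrm{id}$, which uses that an automorphism $\lambda_a$ fixing every $b \in G$ is the identity, and that a homomorphism $\lambda$ with $\lambda_a = \mathrm{id}$ for all $a$ has all of $G$ in its kernel.
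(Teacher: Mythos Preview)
Your proposal is correct and follows essentially the same argument as the paper: assume $H_\lambda$ is a subgroup, invoke the necessary condition $[G,\lambda(G)]\subseteq \mathrm{Ker}\,\lambda$ from Theorem~\ref{t1} (equivalently \eqref{4}), combine with $\mathrm{Ker}\,\lambda=1$ to force every $\lambda_a=\mathrm{id}$, and conclude $\mathrm{Ker}\,\lambda=G$, contradicting $G\neq 1$.
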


\begin{proof}
Indeed, the  hypothesis  $\mathrm{Ker}\,\lambda =1$, using \eqref{4}, gives
$$ \lambda_a(b)=b \; \mbox{ for all } a, b \in G.$$
 Hence  $\lambda_a = id$ for any  $a \in G$, which implies that   $\mathrm{Ker}\,\lambda=G$,  a contradiction.
\end{proof}

\medskip

We remark that the preceding theorem has already been proved in \cite{CC} for finite groups as a special case of so called `gamma functions'.  The following example shows that $H_{\lambda}$ can very well be a subgroup of $\mathrm{Hol}\, G$ for an anti-homomorphism $\lambda : G \to \Aut(G)$ with $\mathrm{Ker}\,\lambda =1$.

\begin{example}
Consider a subgroup  $D$ of $\mathrm{Hol}\,F_n$:
$$D= \left\{ \, (\iota_{a}, a) \, | \,  a \in   F_n   \, \right\}, $$
where $\iota_{a}$ is the inner automorphism of $F_n$ induced by $a$, that is,
  $$
\iota_{a}(b)=a^{-1}ba, \quad b \in F_n.
$$
Since
$$
(\iota_{a},a)(\iota_{b},b)=(\iota_{a}\iota_{b},a\iota_{a}(b))=
(\iota_{ba},ba),
$$
it follows that  $D$ is a subgroup of $\Hol F_n$. Notice that $\iota : F_n \to \Aut(F_n)$, $a \mapsto \iota_a$, is an anti-homomorphisms and $\Ker \iota = 1$.

Also the group operation  `$\circ$' on the set  $F_n$ defined by the formula
$$
a\circ b=a \; \iota_{a}(b)=ba
$$
gives a skew brace $(F_n, \cdot, \circ)$.
\end{example}

\begin{definition}
A skew brace $G$ is said to be {\it meta-trivial}  if there exists a trivial sub-skew brace $H$ of $G$ such that the skew brace $G/H$ is trivial too.
\end{definition}

We now present a characterization of $\lambda$-homomorphic skew braces.

\begin{thm}
Any $\lambda$-homomorphic skew-brace is meta-trivial.
\end{thm}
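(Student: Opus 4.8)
The plan is to take $H := \Ker\lambda = \{\, a \in G \mid \lambda_a = \mathrm{id}\,\}$ as the required trivial sub-skew brace and to prove that the quotient skew brace $G/H$ is trivial as well. Since $(G,\cdot,\circ)$ is $\lambda$-homomorphic, $\lambda : (G,\cdot) \to \Aut G$ is a group homomorphism, so Theorem \ref{t1} applies; in particular the inclusion \eqref{inc}, namely $b^{-1}\lambda_a(b) \in H$ for all $a,b \in G$, holds, and so does the relation \eqref{3}, that is $\lambda(\lambda_a(b)) = \lambda_b$. These are the only identities I expect to use, beyond the fact from \cite{GV2017} that $\lambda : (G,\circ) \to \Aut G$ is a homomorphism.

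First I would verify that $H$ is an ideal of $G$, so that $G/H$ is a genuine skew brace. As the kernel of the homomorphism $\lambda : (G,\cdot) \to \Aut G$, the subset $H$ is a normal subgroup of $(G,\cdot)$; and $H$ is literally the same subset as the kernel of the group homomorphism $\lambda : (G,\circ) \to \Aut G$, hence it is also normal in $(G,\circ)$. Finally $H$ is $\lambda$-invariant: if $b \in H$, then by \eqref{3} we get $\lambda_{\lambda_a(b)} = \lambda_b = \mathrm{id}$, so $\lambda_a(b) \in H$ for every $a \in G$.

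Next I would check that $H$, with the inherited operations, is a trivial skew brace, and that $G/H$ is trivial. For $a,b \in H$ we have $\lambda_a = \mathrm{id}$, whence $a \circ b = a\,\lambda_a(b) = a\cdot b$; so on $H$ the operations `$\cdot$' and `$\circ$' coincide and $H$ is indeed a trivial sub-skew brace. For the quotient, write $\bar g$ for the class of $g$ in $G/H$; the inclusion \eqref{inc} gives $\overline{\lambda_a(b)} = \bar b$ for all $a,b$, so $\bar a \circ \bar b = \overline{a\,\lambda_a(b)} = \bar a \cdot \overline{\lambda_a(b)} = \bar a \cdot \bar b$, that is, `$\cdot$' and `$\circ$' coincide on $G/H$. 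Hence $G$ is meta-trivial.

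The argument is essentially bookkeeping with the identities already established for Theorem \ref{t1}, so I do not anticipate a genuine obstacle. The one point that needs a little care is confirming that $H = \Ker\lambda$ is an honest ideal — in particular that it is $\lambda$-invariant, which is where the $\lambda$-homomorphic hypothesis enters through \eqref{3}, while normality in $(G,\circ)$ needs the original homomorphism property from \cite{GV2017} — so that the symbol `$G/H$' denotes a skew brace to begin with. Once that is in place, triviality of $H$ follows from $\lambda_a|_H = \mathrm{id}$ and triviality of $G/H$ follows from \eqref{inc}, both immediately.
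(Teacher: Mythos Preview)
Your proposal is correct and follows essentially the same route as the paper: both take $H=\Ker\lambda$, observe it is a trivial sub-skew brace, and use the inclusion \eqref{inc} from Theorem~\ref{t1} to see that the two operations agree on the quotient $G/H$. If anything, you are slightly more careful than the paper, since you explicitly verify $\lambda$-invariance of $H$ (via \eqref{3}) so that $H$ is an ideal and the quotient skew brace is well defined, a point the paper leaves implicit.
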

\begin{proof}
Let $(G, \cdot, \circ )$ be a $\lambda$-homomorphic skew brace. Notice that  $\lambda : (G, \circ) \to \Aut (G, \cdot)$ is  a  group homomorphism given by $\lambda_a(b) =  a^{-1} (a \circ b)$. By the given hypothesis, $\lambda : (G, \cdot) \to \Aut (G, \cdot)$ is also a group homomorphism.
 Define
$$G_0 = \left\{  a \in G \mid a \circ b = a b \, \mbox{  for all } b \in G  \, \right\}.$$
Notice that $G_0 = \Ker  \lambda$, and therefore it is a normal subgroup of both $(G, \circ)$ as well as $(G, \cdot)$. Obviously, $G_0$  is the trivial sub-skew brace of $G$.

Set $\bar G = (G, \cdot)/G_0$. Obviously, for all $a, b \in (G, \cdot)$,
$$ a G_0 \cdot bG_0= ab G_0.$$
Define
$$ aG_0 \circ bG_0 = ab G_0, $$
where $a,b \in (G, \cdot)$. Notice that $\{(\lambda_a, a) \mid a \in G\}$ is a regular subgroups of $\Hol (G, \cdot)$. Now by Theorem \ref{t1} we have
\[\lambda \big(b^{-1} \lambda_a(b)\big) = id.\]
Since
\begin{equation*} \label{3.10}
a \circ b=ab b^{-1} \lambda_a(b) \; \mbox{ for all } a, b \in (G, \circ),
\end{equation*}
it follows that the operation `$\circ$'  is well defined on $\bar G$. This makes $(\bar G, \cdot, \circ)$ a trivial skew brace, and the proof is complete.
\end{proof}

It is natural to ask
\begin{qns}
Is it true that every meta-trivial skew brace is  $\lambda$-homomorphic?
\end{qns}

In the following result we present a reduction argument, which allows us to verify  conditions on generators of a given group and on the images of the generators under $\lambda$.

\begin{proposition}\label{redprop}
Let $G$ be a group generated by  $X=\left\{ x_i \mid i\in I\right\}$ and $\lambda : G \to \Aut G$, $a \mapsto \lambda_a$,  a homomorphism such that $\lambda_{x_i} = \varphi_i$ for $i \in I$. If
\begin{equation*} \label{6}
x_i^{-1} \varphi_j(x_i) \in \Ker \lambda, ~~i, j \in I,
\end{equation*}
then
\begin{equation*} \label{7}
b^{-1} \lambda_a (b) \in \Ker \lambda, ~~ a, b \in G.
\end{equation*}
\end{proposition}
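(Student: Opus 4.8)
The plan is a two-step reduction, in each step recognizing a relevant subset of $G$ as a subgroup containing the generating set $X$. Write $K=\Ker\lambda$; since $\lambda$ is a homomorphism, $K$ is normal in $G$, and, exactly as in the proof of Theorem~\ref{t1}, for fixed $a,b\in G$ the condition $b^{-1}\lambda_a(b)\in K$ is equivalent to $\lambda(\lambda_a(b))=\lambda(b)$, i.e.\ to $\lambda_{\lambda_a(b)}=\lambda_b$. First I would fix the ``exponent'' automorphism to be a $\varphi_j$ and let $b$ range over $G$; then I would fix the quantifier over $b$ and let $a$ range over $G$.

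\emph{Step 1: reduction over $b$.} Fix $j\in I$ and put $S_j=\{\,b\in G\mid b^{-1}\varphi_j(b)\in K\,\}$, recalling $\varphi_j=\lambda_{x_j}$. By hypothesis $x_i\in S_j$ for every $i\in I$, so $X\subseteq S_j$. If $b,c\in S_j$, set $u=b^{-1}\varphi_j(b)\in K$ and $v=c^{-1}\varphi_j(c)\in K$; then a short computation gives $(bc)^{-1}\varphi_j(bc)=(c^{-1}uc)\,v$ and $(b^{-1})^{-1}\varphi_j(b^{-1})=b\,u^{-1}b^{-1}$, both of which lie in $K$ since $K$ is normal. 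Hence $S_j$ is a subgroup of $G$ containing $X$, so $S_j=G$; that is, $b^{-1}\varphi_j(b)\in K$ for all $b\in G$ and all $j\in I$, which is to say $\lambda\circ\lambda_{x_j}=\lambda$ as maps $G\to\Aut G$, for every $j\in I$.

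\emph{Step 2: reduction over $a$.} Let $T=\{\,a\in G\mid \lambda\circ\lambda_a=\lambda\,\}$; by the reformulation above this equals $\{\,a\in G\mid b^{-1}\lambda_a(b)\in K\text{ for all }b\in G\,\}$, and Step~1 gives $X\subseteq T$. If $a,a'\in T$, then $\lambda_{aa'}=\lambda_a\lambda_{a'}$ because $\lambda$ is a homomorphism, so $\lambda\circ\lambda_{aa'}=(\lambda\circ\lambda_a)\circ\lambda_{a'}=\lambda\circ\lambda_{a'}=\lambda$; similarly $\lambda_{a^{-1}}=\lambda_a^{-1}$ yields $\lambda\circ\lambda_{a^{-1}}=\lambda\circ\lambda_a^{-1}=\lambda$. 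Thus $T$ is a subgroup of $G$ containing $X$, hence $T=G$, and therefore $b^{-1}\lambda_a(b)\in\Ker\lambda$ for all $a,b\in G$. The two identities in Step~1 are routine; the step requiring care is Step~2, where one must pass from the ``for all $b$'' statement to the functional identity $\lambda\circ\lambda_a=\lambda$ in order to make the set of admissible $a$'s visibly closed under products and inverses — and this is precisely where the hypothesis that $\lambda$ \emph{itself} is a homomorphism (so $\lambda_{aa'}=\lambda_a\lambda_{a'}$) is used.
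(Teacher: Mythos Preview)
Your proof is correct. The underlying algebraic identities --- the product formula $(bc)^{-1}\alpha(bc)=(c^{-1}(b^{-1}\alpha(b))c)(c^{-1}\alpha(c))$ and the analogous closure under inverses and under composition of the automorphism --- are exactly the ones the paper uses. The difference is structural: the paper runs a single induction on the total word length $n=k+m$ of $a$ and $b$ simultaneously, handling all four closure properties (inverse of $b$, inverse of $a$, product of $b$'s, product of automorphisms) inside the inductive step. You instead separate the two variables cleanly, first showing for each fixed generator $x_j$ that $S_j=\{b\mid b^{-1}\varphi_j(b)\in K\}$ is a subgroup containing $X$, and then showing that $T=\{a\mid \lambda\circ\lambda_a=\lambda\}$ is a subgroup containing $X$. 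Your packaging is tidier and makes the role of the hypothesis ``$\lambda$ is a homomorphism on $(G,\cdot)$'' more transparent (it is precisely what gives $\lambda_{aa'}=\lambda_a\lambda_{a'}$ in Step~2), while the paper's joint induction is more hands-on but uses the same ingredients.
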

\begin{proof}
Let $G$  be generated by  $X=\left\{ x_i ~|~ i\in I\right\}$.   Assume that $x_i^{-1} \varphi_j(x_i) \in \Ker \lambda$ for all $i, j \in I$. Let $\alpha \in \Aut G$.
Since $x\alpha(x^{-1}) = x(\alpha(x))^{-1}=x (x^{-1}\alpha(x))^{-1} x^{-1}$ for all $x \in G$, it follows that 
\begin{equation}\label{eqn1sec2}
x^{-1}\alpha(x) \in \Ker \lambda \Longleftrightarrow x\alpha(x^{-1})  \in \Ker \lambda.
\end{equation}
 Next, since 
$x^{-1}\alpha^{-1}(x)= \left( \alpha^{-1}(x^{-1}) \alpha (\alpha^{-1}(x)) \right)^{-1}$ for all $x \in G$, we have 
\begin{equation}\label{eqn2sec2}
x^{-1}\alpha(x)  \in \Ker \lambda \Longleftrightarrow x^{-1}\alpha^{-1}(x)  \in \Ker \lambda.
\end{equation}

If $a$ and $b$ are arbitrary non-trivial elements of  $G$, then
$$
a=x_{i_{1}}^{\epsilon_1} \cdots  x_{i_{k}}^{\epsilon_k}, \quad
b=x_{j_{1}}^{\varepsilon_1} \cdots  x_{j_{m}}^{\varepsilon_m},
$$
where  $i_1,\ldots,i_k,j_1,\ldots,j_m \in I$, $\epsilon_1,\ldots,\epsilon_k,\varepsilon_1,\ldots,\varepsilon_m=\pm 1$.
Thus
$$
b^{-1}\lambda_a(b)=x_{j_{m}}^{-\varepsilon_m} \cdots  x_{j_{1}}^{-\varepsilon_1}
\varphi_{i_{1}}^{\epsilon_1} \cdots  \varphi_{i_{k}}^{\epsilon_k}
(x_{j_{1}}^{\varepsilon_1}\cdots  x_{j_{m}}^{\varepsilon_m}).
$$

Our proof goes by induction on  $n=k+m$. If $k+m=2$, then $k=1$, $m=1$, and therefore the assertion holds by the given hypothesis, \eqref{eqn1sec2} and \eqref{eqn2sec2}. Now we assume that $n = k+m>2$ and the result holds for all values  $\le n-1$. Then  either $k > 1$ or $m > 1$. Let $\gamma \in \Aut G$. Notice that $x^{-1} (\alpha\gamma)(x)=(x^{-1}\gamma(x)) (\gamma(x)^{-1} (\alpha \gamma)(x))$ for all $x \in G$. Thus, if both $x^{-1}\alpha(x)$ and $x^{-1}\gamma(x)$ lie in $\Ker \lambda$  for all $x \in G$, then $x^{-1} (\alpha \gamma)(x)  \in \Ker \lambda$. Also notice that $(xy)^{-1}\alpha(xy) = (y^{-1}(x^{-1}\alpha(x))y)(y^{-1}\alpha(y))$ for all $x, y \in G$. As a consequence,  if both $x^{-1}\alpha(x)$ and $y^{-1}\alpha(y)$ lie in  $\Ker \lambda$ for  any $x, y \in G$, then $(xy)^{-1}\alpha(xy)  \in \Ker \lambda$. Consequently, an easy computation shows that $b^{-1} \lambda_a (b) \in \Ker \lambda$ for  the number $n$ under consideration. The proof is now complete by induction. 
\end{proof}

\medskip

We now construct certain $\lambda$-homomorphic skew braces on free groups and free abelian groups.

Let $F_n = \gen{ x_1,\ldots,x_n }$  be the free group with free generators  $x_1, x_2, \ldots, x_n$.
Let  $\varphi_1, \ldots, \varphi_n \in \mathrm{Aut}\, F_n$ be a set of pairwise commuting automorphisms such that 
$$
x_i^{-1}\varphi_j(x_i)\in F_n',\quad i,j=1,\ldots,n.
$$
Define a homomorphism  $\lambda : F_n \to \Aut  F_n$ by the action on the generators:
$$
\lambda(x_i) :=  \lambda_{x_i} = \varphi_i, \quad i=1,\ldots,n.
$$
Then $\Ker \lambda$ contains the commutator subgroup  $F_n'$ of $F_n$.
Thus we have 
$$
x_i^{-1}\varphi_j(x_i)\in \Ker \lambda \quad i,j=1,\ldots,n
$$
and therefore, by Proposition \ref{redprop},  it follows that
$$
b^{-1} \lambda_a (b)\in \Ker \lambda
$$
for all  $a, b \in F_n$.
Hence, by Theorem \ref{t1},
$$
H_{\lambda}= \left\{ \, (\lambda_a, a) \, | \, a \in F_n  \, \right\}
$$
is a regular subgroup of  $\mathrm{Hol}\, F_n$. Thus the algebraic system  $B=( F_n, \cdot, \circ  )$,
where  $a \circ b=a \lambda_a(b)$ for $a, b\in F_n$, is a skew-brace.

Let $G$ be a group. Then an automorphism $\alpha$ of $G$ is said to be an IA-automorphism if $a^{-1}\alpha(a) \in G'$ for all $a \in G$. The set of all IA-automorphisms of $G$ constitutes a group, which we denote by $\mathrm{IA} \; G$.   So the above construction  is valid for  any choice of pairwise commuting automorphism $\varphi_1, \ldots, \varphi_n$  in $\mathrm{IA}\, F_n$. We now carry-out  the construction in a concrete case for $n=4$.

\begin{example}
Let $F_4 = \gen{x_1, x_2, x_3, x_4}$.
For $u, \; v \in \gamma_2(\gen{x_2, x_3})$, we can define automorphisms $\varphi_i$, $1 \le i \le 4$, of  $F_4$ just by their action of the free generators as follows:
$$
\varphi_1=\varphi_2 : \left\{
\begin{array}{ll}
x_1 \to x_1 u, &  \\
x_i \to x_i & \mbox{for} ~i \not= 1,
\end{array}
\right.
~~~
\varphi_3=\varphi_4 : \left\{
\begin{array}{ll}
x_i \to x_i & \mbox{for} ~i \not= 4,\\
x_4 \to x_4 v. &  \\
\end{array}
\right.
$$
It is not difficult to check that $\varphi_1 \varphi_3 = \varphi_3 \varphi_1$. Hence by the preceding discussion, we get a skew-brace $(F_4, \cdot \; , \circ)$, in which the operation `$\circ$' on free generators is given by
\begin{eqnarray*}
& x_1 \circ x_1=x_1^2u, \quad x_1 \circ x_2=x_1x_2,\quad x_1 \circ x_3=x_1x_3,\quad x_1 \circ x_4=x_1x_4,\\
& x_2 \circ x_1=x_2x_1u,\quad x_2 \circ x_2=x_2^2,\quad x_2 \circ x_3=x_2x_3,\quad x_2 \circ x_4=x_2x_4,\\
&x_3 \circ x_1=x_3x_1,\quad x_3 \circ x_2=x_3x_2,\quad x_3 \circ x_3=x_3^2,\quad x_3 \circ x_4=x_3x_4v,\\
&x_4 \circ x_1=x_4x_1,\quad x_4 \circ x_2=x_4x_2,\quad x_4 \circ x_3=x_4x_3,\quad x_4 \circ x_4=x_4^2v.
\end{eqnarray*}
The operations  `$\cdot$' and  `$\circ$' coincide  on  $F_4'$.
Further, if $\bar x$ is the inverse element under the operation `$\circ$', then $\bar x =\varphi^{-1}(x^{-1})$ and
inverses to $x_1,x_2,x_3,x_4$ are equal to:
$$
\bar x_1 = ux_1^{-1},\quad
\bar x_2 = x_2^{-1},\quad
\bar x_3 = x_3^{-1},\quad
\bar x_4 = v x_4^{-1}.
$$
Since
$$
F_4= \bigsqcup x_1^{a_1}x_2^{a_2}x_3^{a_3}x_4^{a_4} F_4',
$$
where  $a_1, a_2, a_3, a_4 \in \mathbb{Z}$, and
$$
x_1^{a_1}\circ x_2^{a_2}\circ x_3^{a_3}\circ x_4^{a_4}\equiv
x_1^{a_1}x_2^{a_2}x_3^{a_3}x_4^{a_4}\, (\, \mathrm{mod} \, F_4'),
$$
then it follows that
$$
(F_4, \circ)= \bigsqcup x_1^{a_1}\circ x_2^{a_2}\circ x_3^{a_3}\circ x_4^{a_4} (F_4, \circ)'
$$
is a decomposition of  $(F_4, \circ)$ into disjoint union of cosets of $(F_4, \circ)'$. Hence,
$$
(F_4, \circ)/(F_4, \circ)' \cong \mathbb{Z}^4.
$$
\end{example}

\bigskip

\begin{qns}
Find a presentation of  $(F_4, \circ)$. Is it true that $F_4 \cong (F_4, \circ)$?
\end{qns}
The following conjugation relations in  $\left\langle F_4, \circ  \right\rangle$ may be useful while attempting this question: 
\begin{eqnarray*}
& ~~ \bar x_1 \circ x_2 \circ x_1 = ux_1^{-1}x_2x_1, \quad \bar x_1 \circ x_3 \circ x_1= ux_1^{-1}x_3x_1u^{-1}, \quad \bar x_1 \circ x_4 \circ x_1= ux_1^{-1}x_4x_1u^{-1},\\
& \bar x_2 \circ x_1 \circ x_2 = x_2^{-1}x_1u^{-1}x_2, ~~~~\quad \bar x_2 \circ x_3 \circ x_2= x_2^{-1}x_3x_2,~~~~ \quad \bar x_2 \circ x_4 \circ x_2= x_2^{-1}x_4x_2,~~~~ ~~\quad \quad\\
& \bar x_3 \circ x_1 \circ x_3 = x_3^{-1}x_1x_3, ~~~~\quad \bar x_3 \circ x_2 \circ x_3= x_3^{-1}x_2x_3, ~~~~\quad \bar x_3 \circ x_4 \circ x_3= x_3^{-1}x_4v^{-1}x_3,~~~~  ~~\quad  \quad\\
& \bar x_4 \circ x_1 \circ x_4  = vx_4^{-1}x_1x_4v^{-1}, \quad \bar x_4 \circ x_2 \circ x_4= vx_4^{-1}x_2x_4v^{-1}, \quad \bar x_4 \circ x_3 \circ x_4= vx_4^{-1}x_1x_4.
\end{eqnarray*}

\bigskip

Now we construct  braces on free abelian groups.
\begin{example}
Let $A  \cong \mathbb{Z}^n = \langle x_1, x_1, \ldots, x_n \rangle$ be a free abelian group of rank $n$.
Define $\varphi_i \in \Aut A$ as follows:
$$
\varphi_i :
\left\{
\begin{array}{ll}
x_i \to x_i + x_n, &  \\
x_j \to x_j & ~\mbox{for} ~j \not= i,
\end{array}
\right.
$$
for  $1 \le i \le n-1$ and
$$\varphi_n = id.$$
Notice that these automorphisms  commute pairwise, and
$$
\varphi_i (x_i) - x_i = x_n, \quad \varphi_i (x_j) - x_j = 0, \quad \mbox{if}~j \not=i.
$$
Hence, the homomorphism $\lambda  : A \to \mathrm{Aut}(A)$,  defined on the free generators $x_1,x_2, \ldots, x_n$ by
$$
\lambda(x_i)=\varphi_i, \quad i=1, 2, \ldots, n,
$$
satisfies
$$
\lambda(\varphi_i(x_j)-x_j)=id, \quad i,j=1, 2, \ldots, n.
$$
Thus, by Proposition \ref{redprop} and  Theorem \ref{t1}, 
$$
H_{\lambda }= \left\{ \, (\lambda_a, a) \, | \, a \in A  \, \right\}=
 \left\{ \, \left(\prod_{i=1}^{n-1} \varphi_i^{\alpha_i}, \sum_{i=1}^n \alpha_i x_i \right) \mid \alpha_i \in \mathbb{Z}  \, \right\}
$$
is a regular subgroup of  $\mathrm{Hol}\, A$, where $a = \sum_{i=1}^n \alpha_i x_i$, and therefore the operation
$$
a \circ b = a+ \lambda_a(b),\quad a, b \in A
$$
gives rise to a  brace structure  $(A, + , \circ )$.

We now investigate  the structure of the group  $(A, \circ )$.
Notice that the operations  `$+$' and  `$\circ$' are equal on the set
$$
A_0 := \mathrm{Ker}\, \lambda =\left\langle x_n\right\rangle.
$$
Further, for any
$$
a =  \sum_{i=1}^n \alpha_i x_i, ~~~b = \sum_{i=1}^n \beta_i x_i \in A
$$
the following holds
\begin{eqnarray*}
a \circ b &=& a + \lambda_a(b) \\
&=& (\alpha_1 + \beta_1) x_1 + (\alpha_2 + \beta_2) x_2 + \cdots +(\alpha_{n-1} + \beta_{n-1}) x_{n-1} +\\
& & (\alpha_n + \beta_n + \alpha_1 \beta_1 + \alpha_2 \beta_2 + \cdots + \alpha_{n-1} \beta_{n-1}) x_n\\
& =&  b \circ a.
\end{eqnarray*}
Hence, $(A, \circ)$ is an abelian group in which the powers of the generators are given by
$$
x_i^{\circ k} = k x_i + \frac{k(k-1)}{2}x_n,\;1 \le i \le n-1
$$
and
$$
x_n^{\circ k} = k x_n,
$$
for all $k \in \mathbb{Z}$.
Thus any element  $b = \sum_{i=1}^n \beta_i x_i \in A$ has the unique form
$$
b = x_1^{\circ \alpha_1} \circ x_2^{\circ \alpha_2} \circ \cdots \circ x_n^{\circ \alpha_n},
$$
where $\alpha_i = \beta_i$ for $ 1 \le  i \le n-1$ and
$$
\alpha_n = \beta_n - \sum_{j=1}^{n-1} \frac{\beta_j (\beta_j - 1 )}{2}.
$$
Hence  $(A, \circ) \cong A$ and  we have the following short exact sequence
$$
1 \to \Ker \lambda \to A^{(\circ)} \to Z^{n-1} \to 1,
$$
where $\mathrm{Ker}\, \lambda = \langle x_n \rangle \cong \mathbb{Z}$.
\end{example}





\bigskip


\section{$\lambda$-cyclic skew braces} \label{lambda-cyc}

In this section we study $\lambda$-cyclic skew braces, and construct such skew braces on various homogeneous groups.  We begin by recalling
\begin{definition}
A $\lambda$-homomorphic skew brace $(G, \cdot, \circ)$ is said to be a $\lambda$-{\it cyclic skew brace} if the image $\im \lambda$ is a cyclic subgroup of $\Aut (G, \cdot)$.
\end{definition}

Let $G = (G, \cdot)$ be a group defined by a presentation
$$
\mathcal{P} = \langle \mathcal{A} \mid\mathcal{R} \rangle,
$$
where $\mathcal{A} = \{ g_1, g_2, \ldots \}$ is the set of generators and $\mathcal{R} = \{ r_1, r_2, \ldots \}$ is the set of relations. Then any element of $w$ of $G$ can be presented as a word
\begin{equation} \label{w}
w = g_{i_1}^{\alpha_1} g_{i_2}^{\alpha_2} \ldots g_{i_s}^{\alpha_s},~~~g_{i_j} \in \mathcal{A},~~\alpha_j \in \mathbb{Z},
\end{equation}
in  $\mathcal{A}^{\pm 1} = \{ g_1^{\pm 1}, g_2^{\pm 1}, \ldots \}$. We define the {\it logarithm} of $w$, denote by $l(w)$, as  the sum
$$
l(w) = \sum_{j=1}^s \alpha_j.
$$

We say that the presentation $\mathcal{P}$ is {\it homogeneous} if  $l(r_i) = 0$ for all $r_i \in \mathcal{R}$. We  say that a group is {\it homogeneous} if it can be present by a homogeneous presentation. It is not difficult to see that for a homogeneous group $G$, the function logarithm $l : G \to \mathbb{Z}$ is well defined. Also notice that a homogeneous group can have a torsion. For example, the relation $(g_i g_j^{-1})^k = 1$, $k > 1$, has zero logarithm,  but the element $g_i g_j^{-1}$ has finite order.

\begin{example}
1) Any free group, free solvable or free nilpotent group is homogeneous.

2) If $L$ is a tame link in the 3-sphere  $\mathbb{S}^3$, then Wirtinger presentation of the group $G_L = \pi(\mathbb{S}^3 \setminus L)$ gives a homogeneous presentation.
\end{example}

Suppose that $(G, \cdot)$ is a homogeneous group, which is presented by a homogeneous presentation $\mathcal{P}$. Let $\varphi \in \mathrm{Aut} (G, \cdot)$ be such that
$$
l(\varphi (g_i)) = 1~\mbox{for all}~g_i \in \mathcal{A}.
$$

With this setup we have the following result.

\begin{lemma} \label{l1}
1) $l(u v) = l(u) + l(v)$;

2) $l(u^{-1}) = -l(u)$;

3) $l (\varphi^k (w)) = l(w)$ for any integer $k$.
\end{lemma}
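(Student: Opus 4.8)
The plan is to recognise $l$ as an honest group homomorphism $G \to \mathbb{Z}$ and then read off all three statements from this. Let $F$ be the free group on the alphabet $\mathcal{A}$ and let $\ell_0 \colon F \to \mathbb{Z}$ be the homomorphism sending each generator to $1$. Because $\mathcal{P}$ is homogeneous, $\ell_0(r) = l(r) = 0$ for every relator $r \in \mathcal{R}$, so $\ell_0$ vanishes on the normal closure of $\mathcal{R}$ and descends to a homomorphism $l \colon G \to \mathbb{Z}$; by construction this is the logarithm of \eqref{w}, and in particular it is independent of the chosen word, as already noted above. Homomorphy immediately gives $l(uv) = l(u) + l(v)$, which is (1), and then $l(u) + l(u^{-1}) = l(uu^{-1}) = l(1) = 0$ gives (2).

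For (3) I would first settle the case $k = 1$. Writing $w = g_{i_1}^{\alpha_1}\cdots g_{i_s}^{\alpha_s}$ as in \eqref{w} and applying the automorphism $\varphi$, we obtain $\varphi(w) = \varphi(g_{i_1})^{\alpha_1}\cdots \varphi(g_{i_s})^{\alpha_s}$, so parts (1) and (2) yield
$$
l\bigl(\varphi(w)\bigr) = \sum_{j=1}^{s}\alpha_j\, l\bigl(\varphi(g_{i_j})\bigr) = \sum_{j=1}^{s}\alpha_j = l(w),
$$
the middle equality holding because $l(\varphi(g_i)) = 1$ for every $g_i \in \mathcal{A}$ by the standing hypothesis on $\varphi$. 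Thus $l \circ \varphi = l$ as maps $G \to \mathbb{Z}$. Precomposing this identity with $\varphi^{-1}$ gives $l \circ \varphi^{-1} = l$ as well, and a short induction on $|k|$ then upgrades it to $l \circ \varphi^{k} = l$ for every integer $k$, which is (3).

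The whole argument is essentially bookkeeping, so I do not expect a genuine obstacle. The single point that needs care is the well-definedness of $l$ on $G$, i.e.\ that the sum $\sum_j \alpha_j$ does not depend on the expression \eqref{w} chosen for $w$; this is exactly what homogeneity of $\mathcal{P}$ supplies (it says precisely that the relators lie in $\ker \ell_0$), and it has already been recorded in the text just before the lemma.
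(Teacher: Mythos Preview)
Your proof is correct and follows essentially the same approach as the paper: both deduce (1) and (2) directly from $l$ being a homomorphism (the paper says ``from the definition''), and both handle (3) by reducing to $k=1$ and expanding $\varphi(w)$ in terms of the $\varphi(g_{i_j})$. You are slightly more explicit than the paper in spelling out why $l$ is well defined (via the quotient of $\ell_0$) and in treating negative $k$ by precomposing with $\varphi^{-1}$, but these are elaborations rather than a different route.
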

\begin{proof}
The first and second formulas follow from the definition of the logarithm. Obviously, it is sufficient to prove the  third formula for  $k=1$. Suppose that $w$ has the form (\ref{w}), then, using the first two formulas and the property of  $\varphi$, we get
$$
\varphi(w) = \varphi(g_{i_1})^{\alpha_1} \varphi(g_{i_2})^{\alpha_2} \cdots \varphi(g_{i_s})^{\alpha_s}
$$
and
\begin{eqnarray*}
l(\varphi(w)) &=& \alpha_1 l(\varphi(g_{i_1})) + \alpha_2 l(\varphi(g_{i_2})) + \cdots + \alpha_s l(\varphi(g_{s_1})) \\
&=& \alpha_1  + \alpha_2  + \cdots + \alpha_s\\
& =& l(w).
\end{eqnarray*}
The proof is complete.
\end{proof}

\vspace{.1in}

We continue with the above setup. Notice that the map
$$
\lambda : (G, \cdot) \to \mathrm{Aut} (G, \cdot),
$$
given  by  $w \mapsto \lambda_w := \varphi^{l(w)}$, defines a homomorphism.
By Lemma \ref{l1}(3) we easily get   $\lambda(w^{-1} \lambda_a(w)) = id$.  It now follows from  Theorem \ref{t1} that   $H_{\lambda} = \{ (\varphi^{l(w)}, w) \in \mathrm{Hol}~ (G, \cdot) \mid w \in G \}$ is a regular subgroup of
$\Hol (G, \cdot)$.  A direct proof of this statement is also easy, which we present below.

\begin{thm} \label{lcb}
The set
$
H_{\lambda} = \{ (\varphi^{l(w)}, w) \in \mathrm{Hol}~ (G, \cdot) \mid w \in G \}
$
is a regular subgroup of $\mathrm{Hol}~ (G, \cdot)$, where $(G, \cdot)$ and $\varphi$ are as taken above.
\end{thm}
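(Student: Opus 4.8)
The plan is to give the direct verification promised in the paragraph preceding the statement, using nothing beyond the three rules for the logarithm collected in Lemma~\ref{l1}; as an aside, one could instead observe that for $b\in G$ and $k\in\mathbb Z$ one has $l\big(b^{-1}\varphi^{k}(b)\big)=-l(b)+l(b)=0$, so the inclusion \eqref{inc} holds for the homomorphism $w\mapsto\varphi^{l(w)}$ and Theorem~\ref{t1} applies verbatim, but spelling out the elementary argument is just as quick.

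First I would check that $H_\lambda$ is a subgroup of $\mathrm{Hol}\,(G,\cdot)$. For $u,v\in G$,
$$(\varphi^{l(u)},u)(\varphi^{l(v)},v)=\big(\varphi^{l(u)}\varphi^{l(v)},\,u\,\varphi^{l(u)}(v)\big)=\big(\varphi^{l(u)+l(v)},\,u\,\varphi^{l(u)}(v)\big),$$
and by Lemma~\ref{l1}(1) and (3), $l\big(u\,\varphi^{l(u)}(v)\big)=l(u)+l\big(\varphi^{l(u)}(v)\big)=l(u)+l(v)$; hence the product equals $\big(\varphi^{l(u\varphi^{l(u)}(v))},\,u\,\varphi^{l(u)}(v)\big)\in H_\lambda$. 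Since $l(1)=0$, the identity $(\mathrm{id},1)$ lies in $H_\lambda$. Recalling that $(f,x)^{-1}=(f^{-1},f^{-1}(x^{-1}))$ in $\mathrm{Hol}\,(G,\cdot)$, we get
$$(\varphi^{l(w)},w)^{-1}=\big(\varphi^{-l(w)},\,\varphi^{-l(w)}(w^{-1})\big),$$
and Lemma~\ref{l1}(2) and (3) give $l\big(\varphi^{-l(w)}(w^{-1})\big)=l(w^{-1})=-l(w)$, so this element again belongs to $H_\lambda$. Thus $H_\lambda\le\mathrm{Hol}\,(G,\cdot)$.

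Next I would establish regularity. By construction the map $w\mapsto(\varphi^{l(w)},w)$ is a bijection from $G$ onto $H_\lambda$, so it suffices to show that for each $b\in G$ there is a unique $(\varphi^{l(w)},w)\in H_\lambda$ with $w\,\varphi^{l(w)}(b)=1$, i.e. with $w=\varphi^{l(w)}(b^{-1})$. Applying $l$ and using Lemma~\ref{l1}(3) and (2) forces $l(w)=l(b^{-1})=-l(b)$, so necessarily $w=\varphi^{-l(b)}(b^{-1})$; conversely this $w$ does satisfy $l(w)=l(b^{-1})=-l(b)$, hence $(\varphi^{l(w)},w)=(\varphi^{-l(b)},\varphi^{-l(b)}(b^{-1}))\in H_\lambda$, and for it $w\,\varphi^{l(w)}(b)=\varphi^{-l(b)}(b^{-1})\,\varphi^{-l(b)}(b)=\varphi^{-l(b)}(b^{-1}b)=1$. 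This proves existence and uniqueness, so $H_\lambda$ is regular, which completes the argument.

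I do not anticipate a genuine obstacle: the whole argument is bookkeeping with the additive function $l$, and Lemma~\ref{l1} has already isolated exactly the facts one needs. The only point that repays a moment's attention is the regularity step, where one must confirm that the forced candidate $w=\varphi^{-l(b)}(b^{-1})$ really has logarithm $-l(b)$, so that $(\varphi^{l(w)},w)$ is the prescribed member of $H_\lambda$; but this is immediate from parts (2) and (3) of Lemma~\ref{l1}.
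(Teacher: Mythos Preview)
Your proposal is correct and follows essentially the same direct verification as the paper: closure under product via $l(u\varphi^{l(u)}(v))=l(u)+l(v)$, closure under inverses via $l(\varphi^{-l(w)}(w^{-1}))=-l(w)$, and regularity by identifying the unique solution as $(\varphi^{-l(b)},\varphi^{-l(b)}(b^{-1}))$. The only cosmetic difference is that the paper derives the inverse by solving the system $(\varphi^{l(u)},u)(\varphi^{l(v)},v)=(\mathrm{id},1)$ rather than quoting the formula $(f,x)^{-1}=(f^{-1},f^{-1}(x^{-1}))$ as you do; your aside about deducing the result from Theorem~\ref{t1} also matches the remark preceding the theorem.
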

\begin{proof}
Let $(\varphi^{l(u)}, u)$ and  $(\varphi^{l(v)}, v)$ be two elements in $H_{\varphi}$. Then
$$
(\varphi^{l(u)}, u) (\varphi^{l(v)}, v) = (\varphi^{l(u) + l(v)}, u \varphi^{l(u)} (v)).
$$
 By Lemma \ref{l1}  we have
$$
l(u \varphi^{l(u)}(v)) = l(u) + l(\varphi^{l(u)}(v)) = l(u) + l(v).
$$
Hence,
$$
(\varphi^{l(u)}, u) (\varphi^{l(v)}, v) \in H_{\varphi}.
$$

Let us find the inverse  $(\varphi^{l(u)}, u)^{-1}$ of  $(\varphi^{l(u)}, u)$ in $\Hol (G, \cdot)$. Suppose that $(\varphi^{l(u)}, u)^{-1} = (\varphi^{l(v)}, v)$ for some $v \in G$. Then
$$
(\varphi^{l(u) + l(v)}, u \varphi^{l(u)} (v)) =(\varphi^{l(u)}, u) (\varphi^{l(v)}, v) = (id, 1),
$$
where $id$ is the identity automorphism and $1$ is the identity element in $G$. This equality is equivalent to the system of equations
$$
\left\{
\begin{array}{l}
\varphi^{l(u) + l(v)} = id,\\
u \varphi^{l(u)} (v) = 1.\\
\end{array}
\right.
$$
From  the second equality, we get
$$
v = \varphi^{-l(u)} (u^{-1}).
$$
Lemma \ref{l1} now gives
$$
l(v) = l\left( \varphi^{-l(u)} (u^{-1}) \right) = l (u^{-1}) = - l (u),
$$
which satisfies the first equality.  Hence,  $(\varphi^{l(u)}, u)^{-1} \in H_{\varphi}$, and therefore $ H_{\varphi}$ is a subgroup of $\Hol \; (G, \cdot)$.

We now prove that $H_{\varphi}$ is  regular. It is required to show that for any given $u \in G$, there is a unique element $(\varphi^{l(v)}, v) \in H_{\varphi}$ such that
$$
v \varphi^{l(v)} (u) = 1.
$$
An easy computation shows  that $(\varphi^{-l(u)}, \varphi^{-l(u)} (u^{-1}))$ is the required unique element.
\end{proof}

We now present an example satisfying the hypothesis of the  preceding theorem.

\begin{example}
As observed above the free group $F_n$, on free generators $x_1, \ldots, x_n$, is homogeneous. Let $\theta$ be an $IA$-automorphism of  $F_n$. Then it follows that $l(\theta(x_i)) = 1$.  Hence, by the preceding theorem, the set
 $H_{\lambda}$, corresponding to the  homomorphism  $\lambda : F_n \to \Aut F_n$
 defined on the free generators by
 $$
\lambda(x_i)=\theta, \quad i=1,\ldots,n,
$$
is a regular subgroup of  $\Hol F_n$.
In particular,   any  arbitrary inner automorphism of  $F_n$ may be taken as $\theta$.
\end{example}

We remark that the preceding construction does not work for all automorphisms of $F_n$. We consider the following example.

\begin{example}
Let   $\theta$ be the automorphism of $F_n$ defined on the free generators by
$$
\theta(x_1)=x_1x_2,\,\, \theta(x_k)=x_k,\,\, k=2,\ldots,n.
$$
Then,
$$
\lambda(x_1^{-1}(\lambda(x_2)(x_1))) = \lambda(x_1^{-1} \theta(x_1)) = \lambda(x_2) = \theta,
$$
which is not an identity automorphism of $F_n$. Hence, by Theorem \ref{t1}, $H_{\lambda}$ is not a subgroup of $\Hol F_n$.
\end{example}

It is easy to prove the following result.

\begin{proposition}\label{impprop}
Let $\theta$  be  automorphisms of $F_n$ defined on the free generators $x_1, \ldots, x_n$ by
\[\theta(x_i) =  x_{\sigma(i)} u_i\,  \mbox{for some } u_i \in F_n'\; \mbox{and some permutation } \sigma \in S_n\]
or
\[\theta(x_i) =  x_{\sigma(i)}^{-1} u_i\,  \mbox{for some } u_i \in F_n'\; \mbox{and some permutation } \sigma \in S_n.\]
Then
$$
\lambda (b^{-1}\lambda_a(b))=1
$$
for all $a, b \in F_n$, where  $\lambda : F_n\to \Aut F_n$,
defined by the rule
$$
\lambda(x_i)=\theta, \quad i=1,\ldots,n,
$$
is a homomorphism.
\end{proposition}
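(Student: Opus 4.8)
The plan is to apply Proposition \ref{redprop} with $G=F_n$, generating set $X=\{x_1,\dots,x_n\}$, and $\varphi_i=\theta$ for every $i$. Since all the $\varphi_j$ coincide with the single automorphism $\theta$, the hypothesis of that proposition collapses to the $n$ conditions $x_i^{-1}\theta(x_i)\in\Ker\lambda$, $i=1,\dots,n$, while its conclusion is precisely $b^{-1}\lambda_a(b)\in\Ker\lambda$ for all $a,b$, which is the asserted identity $\lambda(b^{-1}\lambda_a(b))=1$. So the whole proof reduces to checking $x_i^{-1}\theta(x_i)\in\Ker\lambda$ for each generator $x_i$.

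To carry this out I would use the exponent-sum homomorphism $l:F_n\to\mathbb{Z}$, which is well defined because $F_n$ is free. As $\lambda$ is a homomorphism sending every $x_i$ to $\theta$, it factors as $\lambda_a=\theta^{\,l(a)}$ for all $a$; hence $\Ker\lambda=\{\,w\in F_n\mid\theta^{\,l(w)}=\mathrm{id}\,\}$, which in particular contains $\ker l$. Since $u_i\in F_n'$ has $l(u_i)=0$, in the first case $l\big(x_i^{-1}\theta(x_i)\big)=l\big(x_i^{-1}x_{\sigma(i)}u_i\big)=-1+1+0=0$, so $x_i^{-1}\theta(x_i)\in\ker l\subseteq\Ker\lambda$ and Proposition \ref{redprop} applies at once; alternatively one may cite Theorem \ref{lcb}, whose hypothesis $l(\theta(x_i))=1$ is exactly what holds here. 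In the second case $l\big(x_i^{-1}\theta(x_i)\big)=l\big(x_i^{-1}x_{\sigma(i)}^{-1}u_i\big)=-1-1+0=-2$, so $x_i^{-1}\theta(x_i)\in\Ker\lambda$ if and only if $\theta^{-2}=\mathrm{id}$, equivalently $\theta^2=\mathrm{id}$.

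The main obstacle is thus confined to the second family of automorphisms: one has to verify that a $\theta$ of the form $\theta(x_i)=x_{\sigma(i)}^{-1}u_i$ squares to the identity. Unwinding, $\theta^2(x_i)=u_{\sigma(i)}^{-1}\,x_{\sigma^2(i)}\,\theta(u_i)$, so this amounts to $\sigma^2=\mathrm{id}$ together with a compatibility condition on the $u_i$; it holds for the representative instances (the inversion $x_i\mapsto x_i^{-1}$, and more generally those $\theta$ of this shape that have order $2$), and checking it in the situation at hand is the one point I would treat carefully. Once $\theta^2=\mathrm{id}$ is known, every element of even exponent sum lies in $\Ker\lambda$; in particular $x_i^{-1}\theta(x_i)$, of exponent sum $-2$, does, and Proposition \ref{redprop} then delivers $\lambda(b^{-1}\lambda_a(b))=1$ for all $a,b\in F_n$.
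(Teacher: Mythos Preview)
The paper offers no proof of this proposition, merely prefacing it with ``It is easy to prove the following result.'' Your reduction via Proposition~\ref{redprop} (equivalently, via the logarithm $l$ and Theorem~\ref{lcb} in the first case) is the natural argument and is exactly what the authors must have in mind. The first case goes through cleanly: $l(\theta(x_i))=1$, so $x_i^{-1}\theta(x_i)\in\ker l\subseteq\Ker\lambda$, and Proposition~\ref{redprop} finishes.

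Your caution in the second case is fully warranted, and in fact the statement as written is \emph{false} without the extra hypothesis $\theta^2=\mathrm{id}$. Take $n=3$, $\sigma=(1\,2\,3)$, $u_i=1$, so $\theta(x_1)=x_2^{-1}$, $\theta(x_2)=x_3^{-1}$, $\theta(x_3)=x_1^{-1}$. This $\theta$ is an automorphism of order~$6$: one checks that $\theta^2$ is the cyclic permutation $x_1\mapsto x_3\mapsto x_2\mapsto x_1$ and $\theta^3$ is global inversion. Now $x_1^{-1}\theta(x_1)=x_1^{-1}x_2^{-1}$ has $l$-value $-2$, and $\lambda(x_1^{-1}x_2^{-1})=\theta^{-2}\neq\mathrm{id}$; so already for $a=b=x_1$ the asserted identity $\lambda(b^{-1}\lambda_a(b))=1$ fails. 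Your computation shows exactly what is required: since $l(\theta(w))=-l(w)$ in this case, one gets $l\big(b^{-1}\theta^{l(a)}(b)\big)=\big((-1)^{l(a)}-1\big)l(b)$, so the conclusion holds for all $a,b$ if and only if $\theta^{2}=\mathrm{id}$. Both applications the paper actually makes of this proposition --- the swap $x\leftrightarrow y$ and the inversion $x_i\mapsto x_i^{-1}$ on $F_2$ --- do satisfy $\theta^2=\mathrm{id}$, so the subsequent results are unaffected; but the proposition as stated needs that hypothesis added, and your ``one point I would treat carefully'' is a genuine gap in the paper, not in your argument.
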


In the remaining of this section, we concentrate on $F_2$. Let  $F_2$ be the free group on free generators $x, y$. Let $ \theta\in \Aut F_2$ which permutes the generators, that is, 
$$ \theta: x\leftrightarrows y.$$
Then $\theta$ satisfies the hypothesis of Proposition \ref{impprop}, and therefore $H_{\lambda}$ is a regular subgroup of  $\Hol F_2$.  By Reidemeister-Shraier method, it follows that  the subgroup $H_0 = \gen{xy, yx, x^2, y^2}$  is  the kernel of $\lambda$. Notice that  $H_0$ is the free group with free generators $xy$, $x^2$, $xy^{-1}$ and has index  2 in $F_2$. Since $F_2$ is a normal subgroup of $\Hol F_2$, it follows that $H_{\lambda}$ and $F_2$ are not conjugate in $\Hol F_2$.

Notice that $\lambda_a = id$ if $a \in H_0$ and $\lambda_a = \theta$ otherwise. Hence $( F_2, \cdot, \circ )$ is a skew brace with
$$
a\circ b=
\left\{
\begin{array}{ll}
  ab, & \mbox{if} \,\, a\in H_0,\,\, b \in F_2,\\
  \\
  a\theta(b), & \mbox{if} \,\, a\not\in H_0,\,\, b \in F_2. \\
\end{array}
\right.
$$

Since `$\circ$' and `$\cdot$' coincide on $H_0$, $(H_0, \circ)$ is a free group of rank $3$. With the preceding setting, we now prove

\begin{proposition}
$(F_2,  \circ)$ is a free group of rank 2.
\end{proposition}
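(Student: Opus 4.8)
The plan is to recognise $(F_2,\circ)$ as a torsion-free group that contains the free group $(H_0,\circ)\cong F_3$ as a subgroup of index two, and then to invoke the classical theorem of Stallings and Swan that a finitely generated torsion-free group having a free subgroup of finite index is itself free, together with the Nielsen--Schreier index formula to pin down the rank. First I would record the structural facts. Since $\lambda_a=\mathrm{id}$ for every $a\in H_0$, we have $a\circ b=ab$ whenever $a\in H_0$, so the operations $\circ$ and $\cdot$ restrict to the same operation on $H_0$; hence $(H_0,\circ)=(H_0,\cdot)$ is a subgroup of $(F_2,\circ)$, free of rank $3$ by the discussion preceding the proposition. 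Moreover $H_0=\Ker\lambda$ and $\lambda:(F_2,\circ)\to\Aut(F_2,\cdot)$ is a group homomorphism with image $\{\mathrm{id},\theta\}\cong\mathbb{Z}_2$, so $(H_0,\circ)$ is normal of index $2$ in $(F_2,\circ)$, with $(F_2,\circ)/(H_0,\circ)\cong\mathbb{Z}_2$.

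The heart of the argument is the claim that $(F_2,\circ)$ is torsion-free. Let $g\in F_2$ satisfy $g^{\circ d}=1$ for some $d\ge 1$. If $g\in H_0$, then $g=1$ because $(H_0,\circ)\cong F_3$ is torsion-free. Suppose now $g\notin H_0$; then the image of $g$ in $(F_2,\circ)/(H_0,\circ)\cong\mathbb{Z}_2$ is the nontrivial element, so the image of $g^{\circ 2}$ is trivial, i.e. $g^{\circ 2}$ lies in $(H_0,\circ)$, and being a torsion element of this torsion-free group it must be trivial: $g\circ g=g\,\theta(g)=1$, that is, $\theta(g)=g^{-1}$. At this point I would apply the logarithm $l:F_2\to\mathbb{Z}$ (well defined since $F_2$ is homogeneous): because $\theta$ merely permutes the free generators, $l(\theta(g))=l(g)$, while $l(g^{-1})=-l(g)$, so $l(g)=0$. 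On the other hand $\lambda_w=\theta^{\,l(w)}$, hence $H_0=\Ker\lambda=\{w\in F_2\mid l(w)\text{ even}\}$; thus $l(g)=0$ forces $g\in H_0$, a contradiction. Therefore $(F_2,\circ)$ has no nontrivial torsion.

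To conclude, $(F_2,\circ)$ is a torsion-free group possessing the finitely generated free subgroup $(H_0,\circ)\cong F_3$ of finite index $2$, so by the Stallings--Swan theorem it is free, of some rank $r$; applying the Nielsen--Schreier index formula to $(H_0,\circ)\le(F_2,\circ)$ gives $3=1+2(r-1)$, whence $r=2$. (As an independent check, one verifies directly that $x$ and $y$ generate $(F_2,\circ)$: from $x\circ x=xy$, $x\circ y=x^2$, $y\circ x=y^2$, $y\circ y=yx$ the subgroup of $(F_2,\circ)$ generated by $x$ and $y$ contains $xy$, $yx$, $x^2$, $y^2$, hence all of $H_0$, and since $x\notin H_0$ it must be all of $(F_2,\circ)$.) The main obstacle is the torsion-freeness step; everything there rests on using the exponent-sum homomorphism $l$ --- available precisely because we are inside a homogeneous group --- to exclude elements of order two lying outside $H_0$, after which the free-by-finite machinery finishes the proof.
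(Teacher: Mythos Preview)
Your proof is correct and follows essentially the same route as the paper's own argument: rule out torsion outside $H_0$ via the exponent-sum homomorphism $l$ (using $l(\theta(g))=l(g)$ to force $l(g)=0$, contradicting $g\notin H_0$), then apply Stallings--Swan together with the Nielsen--Schreier index formula. The paper's version writes $a=xa_0$ and phrases the parity contradiction as $2\equiv 0\pmod 4$, but this is just a cosmetic repackaging of your computation $l(g)=0$ versus $l(g)$ odd.
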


\begin{proof}
It is enough to prove that  $(F_2,  \circ)$ does not have any torsion element.
Indeed, it follows from a well known
theorem (\cite{St1968}, \cite{Sw1969}) that a finitely generated group without
torsions admitting a free subgroup of finite index is free.
Further, it is enough to show that $(F_2,  \circ)$ does not contain any 2 torsion.
Indeed, the subgroup $H_0$, defined above,  is a normal free subgroup of  $(F_2,  \circ)$ of index two.
Thus, if $(F_2,  \circ)$ has an element of finite order, then $(F_2,  \circ)$ has an element of order two.
Contrarily assume that  $a \in F_2 \setminus \left\{ 1 \right\}$ such that $a\circ a= 1$.
Then $a=xa_0$, $a_0 \in H_0$  and $xa_0y \theta(a_0)= a \circ a = 1$.

Recall that  $l(w)$ denotes the sum of all powers of the generators $x,y$ in the word  $w\in F_2$.
Thus, $l(xa_0y \theta(a_0))=l(1)$ gives 
$$
2+ l(a_0) + l(\theta(a_0))=0.
$$
Note that   $\mathrm{log}(a_0)+\mathrm{log}(\theta(a_0))\equiv 0 (\mbox{mod}\, 4)$. Hence we get
$2\equiv 0 (\mbox{mod}\, 4)$, which is absurd. Hence $(F_2,  \circ)$ does not admit any torsion element.
Using that $\mathrm{rank}(H_0)=3$, $|(F_2,  \circ): H_0|=2$ and the well known formula
$$
\mathrm{rank}(H_0)=1+(\mathrm{rank}((F_2,  \circ))-1) \cdot |(F_2,  \circ): H_0|,
$$
we obtain  $\mathrm{rank}(F_2,  \circ)=2$. The proof is now complete.
\end{proof}

\medskip

Hence, we have proved the following statement.

\begin{proposition}
Let $F_2 = \gen{x, y}$ be a free group, $\theta$  an automorphism of  $F_2$ such that $\theta : x\leftrightarrows y$ and $H_0 \leq F_2$ is the kernel of the homomorphism  $F_2 \to \mathbb{Z}_2 = \gen{\theta}$,
$x,y \mapsto \theta$. Then $(F_2, \cdot, \circ )$ is a skew brace, where `$\circ$'  is defined by the rule
$$
a\circ b=
\left\{
\begin{array}{ll}
  ab, & \mbox{if} \,\, a\in H_0,\,\, b \in F_2,\\
  \\
  a\theta(b), & \mbox{if} \,\, a\not\in H_0,\,\, b \in F_2 \\
\end{array}
\right.
$$
and $(F_2, \circ)$ is a free group of rank 2.
\end{proposition}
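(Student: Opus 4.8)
The plan is to assemble the conclusion from three pieces already in place: that `$\circ$' really defines a skew brace, that $\Ker\lambda = H_0$ (which makes the displayed case-formula for `$\circ$' correct), and that $(F_2,\circ)$ is free of rank $2$ (which is essentially the content of the preceding proposition). So the proof is mostly a recapitulation, organised as follows.

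First I would note that $\theta\colon x\leftrightarrows y$ is precisely the instance of Proposition~\ref{impprop} in which $\sigma\in S_2$ is the transposition and each $u_i=1$; hence $\lambda\bigl(b^{-1}\lambda_a(b)\bigr)=1$ for all $a,b\in F_2$, i.e. $[F_2,\lambda(F_2)]\subseteq\Ker\lambda$. By Theorem~\ref{t1} the set $H_\lambda=\{(\lambda_a,a)\mid a\in F_2\}$ is then a regular subgroup of $\Hol F_2$, so $(F_2,\cdot,\circ)$ with $a\circ b=a\lambda_a(b)$ is a skew brace. Next I would compute $\Ker\lambda$: by construction $\lambda$ is the composite $F_2\to\langle\theta\rangle\cong\mathbb{Z}_2\hookrightarrow\Aut F_2$ with $x,y$ mapped to the generator (note $\theta^2=\mathrm{id}$), so $\Ker\lambda$ is the index-$2$ subgroup $H_0$ of words of even total exponent sum. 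A Reidemeister--Schreier computation with Schreier transversal $\{1,x\}$ exhibits $H_0=\langle xy,\,x^2,\,xy^{-1}\rangle$ (equivalently $\langle xy,yx,x^2,y^2\rangle$) as free of rank $3$. Consequently $\lambda_a=\mathrm{id}$ for $a\in H_0$ and $\lambda_a=\theta$ otherwise, which is exactly the claimed formula for `$\circ$'; in particular `$\circ$' and `$\cdot$' agree on $H_0$, so $(H_0,\circ)=(H_0,\cdot)$ is free of rank $3$.

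It then remains to prove $(F_2,\circ)$ is free of rank $2$. Here $H_0$ is a subgroup of $(F_2,\circ)$ (since $\circ$ restricts to $\cdot$ there); it is $\theta$-invariant, and because $a\circ h=a\theta(h)$ with $\theta(H_0)=H_0$, a short coset check shows $H_0$ has index $2$ in $(F_2,\circ)$, hence is normal. Thus $(F_2,\circ)$ is finitely generated and virtually free, so by the Stallings--Swan theorem (\cite{St1968,Sw1969}) it suffices to show it is torsion-free; since $H_0$ is torsion-free of index $2$, it suffices to rule out $a\in F_2\setminus\{1\}$ with $a\circ a=1$. Writing $a=xa_0$ with $a_0\in H_0$, the equation $a\circ a=1$ unwinds to $xa_0\,y\,\theta(a_0)=1$. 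Applying the logarithm $l$ and Lemma~\ref{l1} (so $l(\theta(a_0))=l(a_0)$) gives $2+l(a_0)+l(\theta(a_0))=0$; but $l(a_0)+l(\theta(a_0))=2l(a_0)\equiv 0\pmod 4$ because $a_0\in H_0$ has even logarithm, so $2\equiv 0\pmod 4$, a contradiction. Hence $(F_2,\circ)$ is torsion-free, therefore free, and the Nielsen--Schreier index formula $\mathrm{rank}(H_0)=1+\bigl(\mathrm{rank}(F_2,\circ)-1\bigr)\cdot 2$ with $\mathrm{rank}(H_0)=3$ forces $\mathrm{rank}(F_2,\circ)=2$.

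The main obstacle is this torsion-freeness step. Everything else is bookkeeping once Proposition~\ref{impprop} and Theorem~\ref{t1} are in hand, but ruling out an involution requires the parity argument, and the delicate point is that for $a_0\in H_0$ one needs not merely that $l(a_0)$ is even but that $l(a_0)+l(\theta(a_0))\equiv 0\pmod 4$ --- which holds because $\theta$ preserves the logarithm and $H_0$ is generated by words of length two --- so that the stray $+2$ contributed by the two occurrences of generators outside $H_0$ in $xa_0\,y\,\theta(a_0)$ cannot be cancelled.
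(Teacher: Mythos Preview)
Your proof is correct and follows essentially the same approach as the paper: you invoke Proposition~\ref{impprop} and Theorem~\ref{t1} to get the skew brace, identify $H_0=\Ker\lambda$ via Reidemeister--Schreier as free of rank~$3$, rule out $2$-torsion in $(F_2,\circ)$ by the logarithm parity argument (with the key observation $l(a_0)+l(\theta(a_0))\equiv 0\pmod 4$), apply Stallings--Swan, and finish with the Nielsen--Schreier index formula. If anything, your write-up is slightly more careful in justifying the auxiliary steps (that $H_0$ is a subgroup of index~$2$ in $(F_2,\circ)$, and the explicit appeal to Lemma~\ref{l1} for $l(\theta(a_0))=l(a_0)$).
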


\bigskip

We conclude this section by constructing a skew brace on $F_2 = \gen{x, y}$ by the automorphism $\theta$ defined on the generators by $x \mapsto x^{-1},\,y \mapsto y^{-1}$.
Let $\lambda :  F_2 \to \mathbb{Z}_2=\left\langle \theta \right\rangle$ be a homomorphism
such that $\lambda(x)= \theta = \lambda(y)$. Then, by Proposition \ref{impprop}, $H_{\lambda}$ is a regular subgroup of  $\Hol  F_2$. Notice  that  $H_0$, the subgroup of $F_2$ consisting of words of even length is equal to $\Ker \lambda$. Since $F_2$ is a normal subgroup of $\Hol F_2$, it follows that  $H$ and $F_2$ are not conjugate in $\Hol F_2$.

Notice that $\lambda_a = id$ if $a \in H_0$ and $\lambda_a = \theta$ otherwise. Hence $( F_2, \cdot, \circ )$ is a skew brace with
$$
a\circ b=
\left\{
\begin{array}{ll}
  ab, & \mbox{if} \,\, a\in H_0,\,\, b \in F_2,\\
  \\
  a\theta(b), & \mbox{if} \,\, a\not\in H_0,\,\, b \in F_2. \\
\end{array}
\right.
$$
Notice that $x \circ x = 1$.

We now find  the structure of  $(F_2, \circ)$. Notice that $(F_2, \circ)$ is generated by  $H_0$ and  $x$.
Since $x \circ x=1$ and $H_0 \unlhd F_2$, it follows that
$$
(F_2, \circ) = H_0 \rtimes \mathbb{Z}_2, \quad \mathbb{Z}_2=\left\langle s \right\rangle,
$$
where $s \in (F_2, \circ) \setminus H_0$. Notice that  $H_0$ is freely generated by
$$
p= xy,\quad q=x^2,\quad r=y^2.
$$
Also $\bar x = x$. To avoid confusion, let us denote $x$ in $(F_2, \circ)$ by $s$.
We get the following conjugation relations:
$$
sps=p^{-1},\quad sqs=q^{-1},\quad srs=pr^{-1}p^{-1}.
$$
Hence,  $(F_2, \circ)$ has the following  presentation
\begin{eqnarray*}
(F_2, \circ) &=&\left\langle\, p,q,r,s \mid s^2=1,\,\, sps=p^{-1},\,\, sqs=q^{-1},\,\, srs=pr^{-1}p^{-1}\, \right\rangle\\
&\cong& F_3 \rtimes \mathbb{Z}_2.
\end{eqnarray*}

We have proved

\begin{proposition}
Let  $F_2 = \gen{x, y}$ be the free group of rank $2$,   $\theta$  an automorphism of $F_2$ such that $\theta: x\mapsto x^{-1},\,y \mapsto y^{-1}$ and
$H_0\leq F_2$  the kernel of the homomorphism  $F_2 \to \mathbb{Z}_2=\left\langle \theta \right\rangle$, given by
$x, y \mapsto \theta$.
Then $(F_2, \cdot, \circ)$ is a  skew brace with
$$
a\circ b=
\left\{
\begin{array}{ll}
  ab, & \mbox{\hspace{.2in}} \,\, a\in H_0,\,\, b \in F_2,\\
  \\
  a\theta(b), & \mbox{\hspace{.2in}} \,\, a\not\in H_0,\,\, b \in F_2 \\
\end{array}
\right.
$$
Moreover,  $\left\langle F_2, \circ \right\rangle$ is presented by
$$
F_3 \rtimes \mathbb{Z}_2  = \left\langle\, p,q,r,s \mid s^2=1,\,\, sps=p^{-1},\,\, sqs=q^{-1},\,\, srs=pr^{-1}p^{-1}\, \right\rangle.
$$
\end{proposition}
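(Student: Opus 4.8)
The plan is to obtain the skew brace assertion from the general machinery developed above, and then to unwind the operation $\circ$ explicitly in order to read off the structure of $(F_2, \circ)$.

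First I would note that $\theta$ is covered by Proposition \ref{impprop}: writing $\theta(x) = x^{-1}$ and $\theta(y) = y^{-1}$, we have $\theta(x_i) = x_{\sigma(i)}^{-1} u_i$ with $\sigma = \mathrm{id}$ and $u_1 = u_2 = 1 \in F_2'$. Hence, for the homomorphism $\lambda : F_2 \to \mathbb{Z}_2 = \langle \theta \rangle$ with $\lambda(x) = \lambda(y) = \theta$, Proposition \ref{impprop} yields $\lambda(b^{-1}\lambda_a(b)) = 1$ for all $a, b \in F_2$, i.e.\ $[F_2, \lambda(F_2)] \subseteq \Ker \lambda$. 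By Theorem \ref{t1}, $H_\lambda = \{(\lambda_a, a) \mid a \in F_2\}$ is then a regular subgroup of $\Hol F_2$, and Theorem \ref{gv2017} produces a skew brace $(F_2, \cdot, \circ)$ with $a \circ b = a\,\lambda_a(b)$. Since $\theta^2 = \mathrm{id}$, one has $\lambda_a = \mathrm{id}$ exactly when $a$ has even length, i.e.\ when $a \in H_0 = \Ker \lambda$, and $\lambda_a = \theta$ otherwise; substituting into $a \circ b = a\,\lambda_a(b)$ gives the displayed formula for $\circ$.

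Next I would analyse $(F_2, \circ)$. Because $\lambda_a = \mathrm{id}$ for $a \in H_0$, the operations $\circ$ and $\cdot$ agree on $H_0$, so $(H_0, \circ)$ is simply the free group $H_0$ of even-length words; a Reidemeister--Schreier computation with Schreier transversal $\{1, x\}$ shows $H_0$ is free of rank $3$, and $p = xy$, $q = x^2$, $r = y^2$ is a free basis (this set generates the same subgroup as the Reidemeister--Schreier generators, and a $3$-element generating set of a rank-$3$ free group is a basis). Moreover $x \circ x = x\,\theta(x) = x x^{-1} = 1$, so, writing $s$ for the element $x$ regarded inside $(F_2, \circ)$, we have $s^{\circ 2} = 1$ and $s \notin H_0$. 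Since $\lambda : (F_2, \circ) \to \Aut(F_2, \cdot)$ is a homomorphism with kernel $H_0$, the subset $H_0$ is a normal subgroup of $(F_2, \circ)$; and $H_0 \circ x = \{hx \mid h \in H_0\} = F_2 \setminus H_0$ shows it has index $2$. Since also $\langle s\rangle_\circ = \{1, s\}$ meets $H_0$ trivially, $(F_2, \circ) = (H_0, \circ) \rtimes \langle s \rangle \cong F_3 \rtimes \mathbb{Z}_2$ as an internal semidirect product.

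Finally I would record the conjugation action of $s$ on $H_0$ inside $(F_2, \circ)$, keeping in mind that $\bar s = s$ and that the $\theta$-twist in $a \circ b$ is applied exactly when $a \notin H_0$. This gives $s \circ p \circ s = p^{-1}$, $s \circ q \circ s = q^{-1}$, and $s \circ r \circ s = xy^{-2}x^{-1} = pr^{-1}p^{-1}$. Combining the (empty) presentation of the free group $F_3 = \langle p, q, r\rangle$, the relation $s^2 = 1$, and these three conjugation relations gives a presentation of the semidirect product, which is precisely the asserted one; its completeness is automatic from the semidirect-product decomposition, so no further relations can occur. The only point demanding care --- the main obstacle, such as it is --- is keeping the two group operations apart in this last computation: each conjugation must be carried out with $\circ$, so whenever the leftmost accumulated word has odd length the next factor is to be replaced by its image under $\theta$. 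Once this bookkeeping is in place the action, hence the presentation, is immediate, the rank and freeness of $(H_0, \circ)$ being standard Nielsen--Schreier theory.
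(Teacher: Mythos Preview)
Your proposal is correct and follows essentially the same route as the paper: invoke Proposition~\ref{impprop} and Theorem~\ref{t1} to obtain the skew brace, identify $H_0=\Ker\lambda$ as a free rank-$3$ normal subgroup of $(F_2,\circ)$ on the basis $p=xy$, $q=x^2$, $r=y^2$, observe $x\circ x=1$ to get the semidirect-product splitting, and compute the three conjugation relations. If anything, your justification that $H_0\trianglelefteq (F_2,\circ)$ via $\Ker\lambda$ is cleaner than the paper's appeal to $H_0\trianglelefteq F_2$; otherwise the arguments coincide.
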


\bigskip

\section{$\lambda$-cyclic braces on free abelian groups} \label{cyc-ab}

In this section we   construct all possible $\lambda$-cyclic braces  on the  free abelian group $\mathbb{Z}^2$ generated by $x_1$ and $x_2$. It turns out that there are   only $3$ non-isomorphic $\lambda$-cyclic braces. We also construct a non-trivial brace on  any free abelian group  of finite rank.

We start with a general case. Let us take the free abelian group of rank  $n$
$$
\mathbb{Z}^n=\left\{ \, m_1x_1+\cdots + m_nx_n \, | \, m_1, \ldots ,m_n \in   \mathbb{Z}   \, \right\}
$$
with free basis  $\{x_1,\ldots, x_n\}$. Recall that for $a = m_1x_1+\cdots + m_nx_n \in \mathbb{Z}^n$, we defined $l(a) = \sum_{i=1}^{n}m_i$. Let $\varphi$ be a fixed automorphism of $\mathbb{Z}^n$.   As usual, $H_{\lambda} = \{(\lambda_a, a) \mid a \in \mathbb{Z}^n\}$, where $\lambda_a= \varphi^{l(a)}$, is a subset of $\Hol \mathbb{Z}^n$. Let
\[\mathbb{Z}_0^n := \{a \in \mathbb{Z}^n \mid l(a) = 0\}.\]
 Then the next result  follows from Theorem \ref{lcb}.

\begin{proposition}
Suppose that  $\varphi \in \Aut \mathbb{Z}^n$ is such that $\varphi(x_i) \equiv x_i \; (\mathrm{mod} \, \mathbb{Z}^n_0)$, $i=1,\ldots,n$.
Then  $H_{\lambda}$ is a regular subgroup of  $\Hol \mathbb{Z}^n$.
\end{proposition}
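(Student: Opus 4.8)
The plan is to recognise this statement as an immediate instance of Theorem~\ref{lcb}, so that all the work reduces to checking its hypotheses for the additive group $G = (\mathbb{Z}^n, +)$ and the given automorphism $\varphi$.

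First I would record that $\mathbb{Z}^n$ is a homogeneous group: with respect to the fixed free basis $\{x_1, \ldots, x_n\}$ it is presented by $\langle x_1, \ldots, x_n \mid [x_i, x_j] = 1,\ 1 \le i < j \le n \rangle$, and each relator $[x_i, x_j]$ has logarithm $0$. Hence the logarithm $l : \mathbb{Z}^n \to \mathbb{Z}$ is well defined, and explicitly $l(m_1 x_1 + \cdots + m_n x_n) = m_1 + \cdots + m_n$; in particular $\mathbb{Z}^n_0 = \{ a \mid l(a) = 0 \}$ is precisely $\Ker l$. The only point deserving a word of care is this well-definedness, but since the basis $\{x_1, \ldots, x_n\}$ is fixed throughout the section, $l$ is nothing but the (basis-dependent) coordinate sum, which is manifestly a homomorphism onto $\mathbb{Z}$.

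Next I would translate the hypothesis $\varphi(x_i) \equiv x_i \pmod{\mathbb{Z}^n_0}$: it says exactly that $\varphi(x_i) - x_i \in \Ker l$, i.e.\ $l(\varphi(x_i)) = l(x_i) = 1$ for every $i$. This is precisely the condition on $\varphi$ imposed in the paragraph preceding Theorem~\ref{lcb}. Consequently the map $\lambda : \mathbb{Z}^n \to \Aut \mathbb{Z}^n$, $a \mapsto \varphi^{l(a)}$, is the homomorphism considered there, and $H_\lambda = \{ (\varphi^{l(a)}, a) \mid a \in \mathbb{Z}^n \}$ is exactly the set appearing in Theorem~\ref{lcb}. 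Applying that theorem verbatim yields that $H_\lambda$ is a regular subgroup of $\Hol \mathbb{Z}^n$, which is the assertion.

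There is essentially no obstacle here; if one preferred to avoid the citation, the two closure conditions and the regularity of $H_\lambda$ could be re-derived directly from Lemma~\ref{l1}, exactly as in the proof of Theorem~\ref{lcb}, but this would only reproduce that argument.
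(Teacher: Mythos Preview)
Your proposal is correct and matches the paper's approach exactly: the paper simply asserts that ``the next result follows from Theorem~\ref{lcb}'' without further argument, and your write-up spells out the routine verification (that $\mathbb{Z}^n$ is homogeneous and that $\varphi(x_i)\equiv x_i \pmod{\mathbb{Z}^n_0}$ is equivalent to $l(\varphi(x_i))=1$) needed to invoke that theorem.
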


We now prove the converse of this result. More generally

\begin{proposition}\label{cycprop2}
For the free abelian group $\mathbb{Z}^n$ with free basis $\{x_1, \ldots, x_n\}$, $H_{\lambda}$ is a regular subgroup of  $\Hol \mathbb{Z}^n$ if and only if $\varphi \in \Aut \mathbb{Z}^n$ is of the form $\varphi(x_i) \equiv x_i \; (\mathrm{mod} \, \mathbb{Z}^n_0)$, $i=1,\ldots,n$.
\end{proposition}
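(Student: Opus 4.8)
The ``if'' implication is exactly the preceding proposition, so I would concentrate on ``only if'', feeding the hypothesis into Theorem \ref{t1} and then computing $\Ker\lambda$ explicitly.

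Assume $H_{\lambda}$ is a regular subgroup of $\Hol\mathbb{Z}^n$. Since $H_{\lambda}$ is then in particular a subgroup, Theorem \ref{t1} applied to $G=\mathbb{Z}^n$ with the (visibly homomorphic) map $a\mapsto\lambda_a=\varphi^{l(a)}$ gives $b^{-1}\lambda_a(b)\in\Ker\lambda$ for all $a,b\in\mathbb{Z}^n$. In additive notation this says $\varphi^{l(a)}(b)-b\in\Ker\lambda$, and taking $a=x_j$ (so $l(a)=1$ and $\lambda_a=\varphi$) I would record
\[
\varphi(b)-b\in\Ker\lambda\qquad\text{for every }b\in\mathbb{Z}^n,
\]
in particular $\varphi(x_i)-x_i\in\Ker\lambda$ for each $i$. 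Next I would identify $\Ker\lambda$: since $\lambda_a=\varphi^{l(a)}$ and $l\colon\mathbb{Z}^n\to\mathbb{Z}$ is surjective, $\Ker\lambda=\{a\mid\varphi^{l(a)}=\mathrm{id}\}=l^{-1}(d\mathbb{Z})$, where $d$ is the order of $\varphi$ in $\Aut\mathbb{Z}^n$ (with $d\mathbb{Z}=\{0\}$ when $d=\infty$). So $\varphi(x_i)-x_i\in\Ker\lambda$ becomes $l(\varphi(x_i))\equiv 1\pmod d$; if $\varphi$ has infinite order this forces $l(\varphi(x_i))=l(x_i)=1$, i.e.\ $\varphi(x_i)-x_i\in\mathbb{Z}^n_0$, which is the asserted condition.

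The step I expect to be the real obstacle is passing from $l(\varphi(x_i))\equiv 1\pmod d$ to $l(\varphi(x_i))=1$ when $\varphi$ has finite order $d>1$; as literally stated this cannot be done. For instance, with $\varphi=-\mathrm{id}$ one has $d=2$, $\Ker\lambda=\{a\mid l(a)\text{ even}\}$, and $\varphi(b)-b=-2b\in\Ker\lambda$ for every $b$, so Theorem \ref{t1} already shows $H_{\lambda}$ is a regular subgroup, whereas $l(\varphi(x_i))=-1\neq1$, i.e.\ $\varphi(x_i)\not\equiv x_i\pmod{\mathbb{Z}^n_0}$. I would therefore either add the hypothesis that $\varphi$ has infinite order --- the situation relevant to the $\mathbb{Z}^2$ classification that follows --- and then the two paragraphs above constitute a complete proof; or replace the right-hand condition by ``$l(\varphi(x_i))\equiv 1\pmod{\mathrm{ord}\,\varphi}$ for all $i$'', which, since $b\mapsto\varphi(b)-b$ is a homomorphism and so the condition on all $b$ reduces to the basis vectors, is exactly equivalent to $H_{\lambda}$ being a regular subgroup for an arbitrary $\varphi\in\Aut\mathbb{Z}^n$.
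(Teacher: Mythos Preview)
Your analysis is correct, and in fact you have caught a genuine flaw in the proposition as stated. The paper's own argument proceeds by exactly the same computation you set up: it multiplies $(\varphi,x_i)(\varphi,x_j)=(\varphi^{2},x_i+\varphi(x_j))$ inside $H_\lambda$ and then asserts that this forces $l(x_i+\varphi(x_j))=2$, hence $l(\varphi(x_j))=1$. That deduction is precisely the step you flagged as illegitimate when $\varphi$ has finite order, and the paper does not address this point at all. Your route via Theorem~\ref{t1} and the explicit description $\Ker\lambda=l^{-1}(d\mathbb{Z})$ is equivalent to the paper's direct computation, just packaged more cleanly.

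Your counterexample $\varphi=-\mathrm{id}$ is valid: one has $d=2$, $\lambda_a(b)-b\in\{0,-2b\}\subseteq\Ker\lambda$, so $H_\lambda$ is regular by Theorem~\ref{t1}, yet $l(\varphi(x_i))=-1\not\equiv 1\pmod{0}$, i.e.\ $\varphi(x_i)\not\equiv x_i\pmod{\mathbb{Z}^n_0}$. In particular, for $n=2$ this automorphism does not fit either of the two matrix families the paper derives, and the resulting multiplicative group $(\mathbb{Z}^2,\circ)\cong\mathbb{Z}^2\rtimes\mathbb{Z}_2$ (with the $\mathbb{Z}_2$ acting by inversion) has torsion and is therefore isomorphic neither to $\mathbb{Z}^2$ nor to the Klein bottle group. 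So the issue is not cosmetic: it also affects the downstream classification theorem in this section. Either of your proposed repairs---restricting to $\varphi$ of infinite order, or replacing the conclusion by the congruence $l(\varphi(x_i))\equiv 1\pmod{\mathrm{ord}\,\varphi}$---is the right fix; the paper implicitly needs the second, and its proof goes through verbatim once the statement is corrected.
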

\begin{proof}
Let $\varphi \in \Aut \mathbb{Z}^n$ and  $H_{\lambda}$ be a regular subgroup of  $\Hol (\mathbb{Z}^n)$, where $\lambda_{x_i} = \varphi$ for $1 \le i \le n$. Then
$$
(\varphi, x_i) (\varphi, x_j)=(\varphi^2, x_i+\varphi(x_j))\in H_{\lambda}, \quad i,j=1,\ldots,n,
$$
which implies that $l(x_i+\varphi(x_i)) = 2$, for $1 \le i \le n$.
Hence, if
$$
\varphi(x_i)=m_{i1}x_1 + \cdots + m_{in}x_n,
$$
for some $m_{i1},\ldots , m_{in}\in \mathbb{Z}$, then  we get $m_{i1} + \cdots + m_{in}=1$ for $1 \le i \le n$.  Adding  and subtracting $x_i$ on the left side, we get
$$
\varphi(x_i)\equiv x_i \;(\mathrm{mod} \, \mathbb{Z}^n_0), \quad i=1,\ldots,n.
$$

Converse follows from the preceding proposition.
\end{proof}

As an application of the preceding  proposition, we get a left brace $(\mathbb{Z}^n, +, \circ)$, where the  operation  `$\circ$' is defined by
$$
a\circ b=a+\varphi^{l(a)}(b),\quad a,b \in \mathbb{Z}^n.
$$

\bigskip

We are now going to  construct all possible $\lambda$-cyclic braces  on the  free abelian group $\mathbb{Z}^2$. Notice that  $\mathbb{Z}^2_0$ is generated by the element $x_1-x_2$. If $\varphi\in \Aut \mathbb{Z}^2$ is such that
$$
\varphi(x_1) \equiv x_1 \;(\mathrm{mod} \, \mathbb{Z}^2_0), \quad
\varphi(x_2) \equiv x_2 \;(\mathrm{mod} \, \mathbb{Z}^2_0),
$$
then
$$
\varphi(x_1)=x_1+p(x_1-x_2),\quad \varphi(x_2)=x_2+q(x_1-x_2)
$$
for some integers  $p$, $q$ satisfying the relation
$$
\det [\varphi]=p-q+1=\pm 1,
$$
where $[\varphi]$ is the matrix of $\varphi$ with respect to  the basis $\{x_1, x_2\}$ of $\mathbb{Z}^2$. Hence, $q=p$ or $q=p+2$, and
$$
[\varphi]=\left(%
\begin{array}{cc}
  1+p & -p \\
  p & 1-p
\end{array}%
\right)
\quad
\mbox{or}
\quad
[\varphi]=\left(%
\begin{array}{cc}
  1+p & -p \\
  p+2 & -1-p
\end{array}%
\right).
$$
Notice that in the first case  $\varphi$ has infinite order and
$$
[\varphi]=\left(%
\begin{array}{cc}
  1+p & -p \\
  p & 1-p
\end{array}%
\right)
=\left(%
\begin{array}{cc}
  2 & -1 \\
  1 & 0 \\
\end{array}%
\right)^p.
$$
And in the second case, the order of  $\varphi$ is $2$, that is,
$$
\left(%
\begin{array}{cc}
  1+p & -p \\
  p+2 & -1-p \\
\end{array}%
\right)^2=E.
$$

\medskip

Let us first  consider the case
$$
[\varphi]=\left(%
\begin{array}{cc}
  1+p & -p \\
  p & 1-p \\
\end{array}%
\right).
$$
In this case  the automorphism $\varphi$ takes the following form:
$$
\varphi = \left\{
\begin{array}{l}
x_1 \mapsto (1+p) x_1 - p x_2,\\
x_2 \mapsto p x_1 + (1-p) x_2.\\
\end{array}
\right.
$$

Using induction on $k$, it is easy to prove

\begin{lemma}
For any integer $k$, the following holds
$$
\varphi^k = \left\{
\begin{array}{l}
x_1 \mapsto (1 + k p) x_1 - k p x_2,\\
x_2 \mapsto k p x_1 + (1 - k p) x_2.\\
\end{array}
\right.
$$
Consequently,
$$
[\varphi]^k = \left(%
\begin{array}{cc}
  1 + k p & -k p \\
  k p & 1 - k p \\
\end{array}%
\right).
$$
\end{lemma}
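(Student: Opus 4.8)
The plan is to absorb the whole computation into a single nilpotent summand. Write $[\varphi] = E + pN$, where $E$ is the $2\times 2$ identity matrix and
\[
N = \begin{pmatrix} 1 & -1 \\ 1 & -1 \end{pmatrix}.
\]
The one fact that does all the work is $N^2 = 0$, which is an immediate $2\times 2$ multiplication. Since $E$ and $N$ commute, for $k \ge 0$ the binomial theorem gives $(E+pN)^k = \sum_{j\ge 0}\binom{k}{j} p^j N^j = E + kpN$, every term with $j \ge 2$ vanishing. Substituting the explicit $N$ yields
\[
[\varphi]^k = E + kpN = \begin{pmatrix} 1+kp & -kp \\ kp & 1-kp \end{pmatrix},
\]
and reading off the rows recovers the stated action of $\varphi^k$ on $x_1$ and $x_2$.

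For negative exponents I would first note $(E+pN)(E-pN) = E - p^2N^2 = E$, so $[\varphi]^{-1} = E - pN$; then writing $k = -m$ with $m > 0$ and applying the same binomial argument to $(E-pN)^m$ gives $[\varphi]^k = E - mpN = E + kpN$, so the formula holds for all $k \in \mathbb{Z}$. Equivalently --- and this is perhaps the cleanest packaging --- one checks once and for all that $(E + apN)(E + bpN) = E + (a+b)pN$ for all integers $a, b$ (again using only $N^2 = 0$); this single identity simultaneously supplies the base case $k = 0$, the inductive step, and the compatibility with inversion, so an induction on $|k|$ in both directions closes the proof.

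If one instead prefers the direct route indicated before the statement, the induction on $k$ proceeds from $[\varphi]^{k+1} = [\varphi]^k\,[\varphi]$ via a routine $2\times 2$ product, together with the analogous step downward using $[\varphi]^{-1}$. Either way there is no real obstacle here: the only thing worth isolating is the decomposition $[\varphi] = E + pN$ with $N$ nilpotent of index two, after which everything is a mechanical binomial (or inductive) calculation; the sole minor nuisance is remembering to treat the negative exponents, which the identity $(E+apN)(E+bpN) = E+(a+b)pN$ dispatches uniformly.
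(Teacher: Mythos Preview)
Your argument is correct. The paper itself supplies no proof beyond the one-line remark ``Using induction on $k$, it is easy to prove'', i.e.\ the direct route you mention at the end, so your decomposition $[\varphi] = E + pN$ with $N^2 = 0$ is genuinely different from what the paper indicates. Your approach is more conceptual: it \emph{explains} why the powers are linear in $k$ rather than merely verifying it, and the single identity $(E+apN)(E+bpN) = E + (a+b)pN$ handles positive and negative exponents uniformly without a two-directional induction. The paper's suggested induction is of course perfectly adequate for such a routine matrix computation and avoids introducing the auxiliary matrix $N$, but your packaging is cleaner and would scale (for instance, to $[\varphi] = E + N$ with $N$ nilpotent of higher index, where the binomial sum would survive with more terms).
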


\medskip

Take elements  $a=a_1x_1+a_2x_2$ and  $b=b_1x_1+b_2x_2$ in
$\mathbb{Z}^2$ and define the operation
$$
a\circ b=a+\varphi^{a_1+a_2}(b) = a + b + p (a_1 + a_2)(b_1 + b_2)  (x_1 - x_2).
$$
We see that if  $a_1+a_2=0$, then $a\circ b=a+b$, that is, `$\circ$' and `$+$' coincide
on $\mathbb{Z}^2_0$. As we already know that  $(\mathbb{Z}^2, \circ)$ is a group. It is easy to see that  ${\bf 0} = 0x_1 + 0x_2$ is the identity element of $(\mathbb{Z}^2, \circ)$.  Suppose that $a \circ b = {\bf 0}$, then we have the following system of equations:
\begin{eqnarray*}
& a_1 + b_1 +  p (a_1+a_2) (b_1 + b_2) = 0,\\
& a_2 + b_2 -  p (a_1+a_2) (b_1 + b_2) = 0.
\end{eqnarray*}
This implies that a right inverse $a_r$ of $a$  has the form
$$
a_r = -a + p (a_1+a_2)^2 (x_1 - x_2).
$$
An easy calculation shows that $a_r$ is also the left inverse of  $a$. Thus we have established
\begin{lemma}
The inverse of the element $a=a_1x_1+a_2x_2 \in (\mathbb{Z}^2, \circ)$  has the form:
$$
\bar a =
\left\{\begin{array}{ll}
  -a, & \mbox{if}\,\,\, a_1 + a_2 = 0,  \\
  \\
 -a + p (a_1+a_2)^2 (x_1-x_2), & \mbox{if}\,\,\, a_1 + a_2 \not= 0. \\
\end{array}
\right.
$$
\end{lemma}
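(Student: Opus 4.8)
The plan is to work entirely with the closed form $a\circ b = a + b + p(a_1+a_2)(b_1+b_2)(x_1-x_2)$ established just above, together with the elementary fact that the coordinate sum $l(c)=c_1+c_2$ is additive on $\mathbb{Z}^2$. Since $(\mathbb{Z}^2,\circ)$ is already known to be a group with identity ${\bf 0}=0x_1+0x_2$, it is enough to exhibit, for each $a=a_1x_1+a_2x_2$, an element $\bar a$ satisfying $a\circ\bar a=\bar a\circ a={\bf 0}$; uniqueness of the two-sided inverse then finishes the job. In fact only one of the two equations really needs to be verified, but both are immediate.

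First I would treat the case $a_1+a_2=0$. Here the term $p(a_1+a_2)(b_1+b_2)(x_1-x_2)$ vanishes for every $b$, so $a\circ b=a+b$ identically; taking $b=-a$ gives $a\circ(-a)={\bf 0}$, and symmetrically $(-a)\circ a=-a+a+p\,(-(a_1+a_2))(a_1+a_2)(x_1-x_2)={\bf 0}$ because $a_1+a_2=0$. Hence $\bar a=-a$, which matches the first line of the claimed formula.

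For $a_1+a_2\neq 0$ I would set $c:=-a+p(a_1+a_2)^2(x_1-x_2)$, so that $c_1=-a_1+p(a_1+a_2)^2$ and $c_2=-a_2-p(a_1+a_2)^2$, giving $c_1+c_2=-(a_1+a_2)$. Substituting into the formula for $\circ$, one computes $a\circ c=(a+c)+p(a_1+a_2)(c_1+c_2)(x_1-x_2)=p(a_1+a_2)^2(x_1-x_2)-p(a_1+a_2)^2(x_1-x_2)={\bf 0}$, and the symmetric computation yields $c\circ a={\bf 0}$ as well; thus $\bar a=c$. Equivalently, one may simply solve the displayed linear system coming from $a\circ b={\bf 0}$, whose unique solution is precisely this $c$ — the two cases of the lemma just record whether the correction $p(a_1+a_2)^2(x_1-x_2)$ is zero or not.

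I do not expect any genuine obstacle here: everything reduces to a one-line substitution once the bilinear expression for $\circ$ is in hand, and the only spot demanding (mild) attention is keeping track of the coordinate sums $a_1+a_2$ and $c_1+c_2$ while expanding $a\circ c$ and $c\circ a$.
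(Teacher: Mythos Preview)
Your proposal is correct and follows essentially the same approach as the paper: both use the closed form $a\circ b = a+b+p(a_1+a_2)(b_1+b_2)(x_1-x_2)$, with the paper solving the coordinate system $a\circ b=\mathbf{0}$ to derive the right inverse and then checking it is a left inverse, while you simply verify the given candidate directly. The case split $a_1+a_2=0$ versus $a_1+a_2\neq 0$ is cosmetic, as you note, since the second formula subsumes the first.
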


\medskip

We are now going to investigate the structure of $(\mathbb{Z}^2, \circ)$. Notice that
$$
(a_1 x_1 + a_2 x_2) \circ (m x_1) = (a_1 + m + m p(a_1 + a_2)) x_1 + (a_2 - m p (a_1 + a_2)) x_2.
$$
If $a_1 + a_2 = 0$, then
$$
(a_1 x_1 + a_2 x_2) \circ (m x_1) = (a_1 + m) x_1 + a_2 x_2.
$$
Hence
$$
\mathbb{Z}^2=\bigsqcup\limits_{m\in \mathbb{Z}}  \mathbb{Z}^2_0 \circ (m x_1)
$$
is a decomposition of $\mathbb{Z}^2$ as the disjoint union of cosets of $\mathbb{Z}^2_0$.

Further, $m$-th power of  $x_1$ in $(\mathbb{Z}^2, \circ)$ has a form
$$
x_1^{\circ m}=x_1\circ \cdots \circ x_1 = ( id + \varphi + \cdots + \varphi^{m-1})(x_1).
$$
Since
$$
E + [\varphi]+ \cdots + [\varphi]^{m-1}=
\left(%
\begin{array}{cc}
  m+\frac{m(m-1)}{2} p & -\frac{m(m-1)}{2} p \\
  \frac{m(m-1)}{2} p & m-\frac{m(m-1)}{2}p \\
\end{array}%
\right),
$$
we get
$$
x_1^{\circ m}= \left(m + \frac{m(m-1)}{2} p \right) x_1  - \frac{m(m-1)}{2} p x_2,
$$
which gives
$$
x_1^{\circ m}-mx_1 = \frac{m(m-1)}{2} p  x_1  - \frac{m(m-1)}{2} p x_2 = \frac{m(m-1)}{2} p (x_1 - x_1) \in \mathbb{Z}^2_0.
$$

We also have
$$
x_1 \circ (x_1 - x_2) = 2 x_1 - x_2
$$
and
$$
(x_1 - x_2) \circ  x_1  = 2 x_1 - x_2.
$$
Thus, it follows that
$$
(\mathbb{Z}^2, \circ) = \bigsqcup\limits_{m\in \mathbb{Z}}  \mathbb{Z}^2_0 \circ (m x_1)
$$
is the decomposition of $(\mathbb{Z}^2, \circ)$ into the disjoint union of cosets of $\mathbb{Z}^2_0$. In particular,
$(\mathbb{Z}^2, \circ)$ is generated by elements $x_1$ and $x_1 - x_2$. Since
$$
x_1 \circ (x_1 - x_2) = (x_1 - x_2) \circ  x_1,
$$
it follows that $(\mathbb{Z}^2, \circ)$ is the free abelian of rank 2.

\bigskip

Now we consider the second  case, that is,
$$
[\varphi] =\left(%
\begin{array}{cc}
  1 + p & -p \\
  2 + p & -1 - p \\
\end{array}%
\right),~~~p \in \mathbb{Z}.
$$
This matrix defines the automorphism $\varphi$ of the free abelian group $\mathbb{Z}^2 = \langle x_1, x_2 \rangle$ given by
$$
\varphi = \left\{
\begin{array}{l}
x_1 \mapsto (1+p) x_1 -p x_2,\\
x_2 \mapsto (2+p) x_1 - (1+p) x_2.\\
\end{array}
\right.
$$

Let  $a=a_1x_1+a_2x_2$, $b=b_1x_1+b_2x_2$ be two elements of
$\mathbb{Z}^2$, then using the formula $a\circ b = a + \varphi^{a_1+a_2}(b)$, we get
$$
a\circ b  =
\left\{\begin{array}{ll}
  a+b, & \mbox{if}\,\,\, a_1+a_2 \equiv 0 \pmod{2},  \\
  \\
  a + b + (2 b_2 + p (b_1 + b_2)) (x_1 - x_2) & \mbox{if}\,\,\, a_1+a_2 \equiv 1 \pmod{2}. \\
\end{array}
\right.
$$

An easy computation, on the lines of the preceding case, gives
\begin{lemma}
In the group $(\mathbb{Z}^2, \circ)$,  $0 x_1 + 0 x_2$ is the identity element and the inverse element of $a=a_1x_1+a_2x_2$ is given by
$$
\bar a =
\left\{\begin{array}{ll}
-a, & \mbox{if}\,\,\, a_1+a_2 \equiv 0 \pmod{2},  \\
  \\
  -a  +  (2 a_2 + p (a_1 + a_2)) (x_2 - x_1) & \mbox{if}\,\,\, a_1+a_2\equiv 1 \pmod{2}. \\
\end{array}
\right.
$$
\end{lemma}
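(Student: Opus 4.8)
The plan is to work throughout with the uniform description $a \circ b = a + \varphi^{l(a)}(b)$, where $l(a_1x_1+a_2x_2) = a_1+a_2$, rather than with the piecewise formula, and to use only two inputs: that $(\mathbb{Z}^2, \circ)$ is already known to be a group (Proposition \ref{cycprop2} together with Theorem \ref{gv2017}), and that in the case at hand $\varphi^2 = E$, so that $\varphi^{-1} = \varphi$ and $\varphi^{k}$ depends only on $k \bmod 2$. First I would dispose of the identity: since $l(\mathbf{0}) = 0$ we get $\mathbf{0} \circ b = \mathbf{0} + \varphi^{0}(b) = b$ for every $b$, while $b \circ \mathbf{0} = b + \varphi^{l(b)}(\mathbf{0}) = b$ because every automorphism fixes $\mathbf{0}$; hence $\mathbf{0} = 0x_1 + 0x_2$ is the two-sided identity of $(\mathbb{Z}^2, \circ)$.

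Next, the inverse. Solving $a \circ \bar a = \mathbf{0}$ forces $\varphi^{l(a)}(\bar a) = -a$, i.e. $\bar a = \varphi^{-l(a)}(-a) = \varphi^{l(a)}(-a)$ by $\varphi^2 = E$; this equals $-a$ when $l(a) = a_1+a_2$ is even and $-\varphi(a)$ when it is odd. To confirm $\bar a$ is also a left inverse I would invoke Lemma \ref{l1}(2),(3) — whose hypotheses hold here since $l(\varphi(x_i)) = l(x_i) = 1$ — to get $l(\bar a) = l\big(\varphi^{l(a)}(-a)\big) = l(-a) = -l(a)$, so $l(\bar a) \equiv l(a) \pmod 2$ and therefore $\varphi^{l(\bar a)} = \varphi^{l(a)}$; then $\bar a \circ a = \bar a + \varphi^{l(\bar a)}(a) = \varphi^{l(a)}(-a) + \varphi^{l(a)}(a) = \varphi^{l(a)}(\mathbf{0}) = \mathbf{0}$. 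Finally, in the odd case I would substitute $\varphi(x_1) = (1+p)x_1 - px_2$ and $\varphi(x_2) = (2+p)x_1 - (1+p)x_2$ into $-\varphi(a_1x_1 + a_2x_2)$ and collect terms; a one-line computation yields $-\varphi(a) = -a + \big(2a_2 + p(a_1+a_2)\big)(x_2 - x_1)$, which is exactly the asserted formula. (Alternatively, the same computation can be carried out directly from the displayed piecewise formula for $\circ$, on the lines of the preceding case.)

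There is no genuine obstacle here; the statement is a routine verification. If one must single out a delicate point, it is the two-sidedness check for the inverse in the odd case, which hinges on the observation that passing from $a$ to $\bar a$ negates $l$ and hence preserves its parity — precisely the parity on which the defining formula for $\circ$ branches. One may also bypass Lemma \ref{l1} entirely and quote the general skew-brace identity $\bar a = \lambda_a^{-1}(a^{-1})$ recalled in Section \ref{lambda}, with $\lambda_a = \varphi^{l(a)}$: this produces the formula for $\bar a$ at once, and the same parity remark (or a direct coordinate computation) confirms that it is two-sided.
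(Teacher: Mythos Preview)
Your proposal is correct and follows essentially the same approach the paper indicates (``an easy computation, on the lines of the preceding case''); the paper gives no further detail, and your argument is a clean execution of exactly that routine verification, streamlined by using $\varphi^2 = E$ and the closed form $\bar a = \varphi^{-l(a)}(-a)$ rather than solving a two-equation linear system as in the first case.
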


\bigskip

If $a=a_1x_1+a_2x_2 \in \mathbb{Z}^2_0$, then
$$
a= \frac{a_1+a_2}{2}(x_1+x_2)+\frac{a_1-a_2}{2}(x_1-x_2),
$$
that is,  $\mathbb{Z}^2_0$ is generated by elements
$$
z_1 = x_1 + x_2,~~~z_2 = x_1 - x_2.
$$
Present  $\mathbb{Z}^2$ in the form
$$
\mathbb{Z}^2=\mathbb{Z}^2_{0} \bigsqcup(x_1+ \mathbb{Z}^2_{0}).
$$
Since $\varphi(\mathbb{Z}^2_{0})=\mathbb{Z}^2_{0}$, we get
$$
x_1+ \mathbb{Z}^2_{0}=x_1+\varphi(\mathbb{Z}^2_{0})=x_1\circ \mathbb{Z}^2_{0}.
$$
Hence
$$
(\mathbb{Z}^2, \circ) =\mathbb{Z}^2_{0} \sqcup(x_1\circ \mathbb{Z}^2_{0})
$$
is a decomposition of $(\mathbb{Z}^2, \circ )$ into cosets of  $\mathbb{Z}^2_{0}$.

To find the structure of $(\mathbb{Z}^2, \circ )$, we notice that this group is generated by the elements $x_1, z_1, z_2$ and
 the following relations hold:
$$
z_1 \circ z_2 = z_2 \circ z_1,~~~x_1 \circ x_1 = z_1 + (p+1) z_2.
$$
Since $\mathbb{Z}^2_{0}$ is normal in $(\mathbb{Z}^2, \circ )$, we get the following conjugation relations:
$$
\bar x_1 \circ z_1 \circ x_1 =  z_1 + 2(1 + p) z_2,~~~\bar x_1 \circ z_2 \circ x_1 = - z_2.
$$
Hence, we get
\begin{eqnarray*}
(\mathbb{Z}^2, \circ) &= &\langle x_1, z_1, z_2 \mid z_1 \circ z_2 = z_2 \circ z_1,~~x_1 \circ x_1 = z_1 \circ z_2^{\circ (p+1)},\\
& &  \;\; \bar x_1 \circ z_1 \circ x_1 =  z_1 + 2(1 + p) z_2, \bar x_1 \circ z_2 \circ x_1 = \bar z_2 \rangle.
\end{eqnarray*}
Notice that  $x_1^{\circ 2} \circ {\bar z_2}^{\circ (p+1)} = z_1$. Thus $(\mathbb{Z}^2, \circ)$ can finally be presented as
$$
(\mathbb{Z}^2, \circ) = \langle x_1, z_2 \mid \bar x_1 \circ z_2 \circ x_1 = \bar z_2  \rangle,
$$
which  is the Klein bottle group.

Let $(\mathbb{Z}^2, +, \circ)$ be a $\lambda$-cyclic brace. Then $\lambda : (\mathbb{Z}^2, \circ) \to \Aut (\mathbb{Z}^2, +)$ is a homomorphism, $\im \lambda \in \gen{\varphi}$ for some $\varphi \in \Aut (\mathbb{Z}^2, +)$ and $H_{\lambda}$ is a regular subgroup of $\Hol (\mathbb{Z}^2, +)$.  Then, by Proposition \ref{cycprop2}, $\varphi$ is of the form
$$\varphi(x_i) \equiv x_i \; (\mathrm{mod} \, \mathbb{Z}^n_0), \quad i=1, \ldots,n.$$
Hence by the preceding discussion, we get the desired characterization in the following

\begin{thm}
On the free abelian group of rank 2, the multiplicative group of any  $\lambda$-cyclic brace $(\mathbb{Z}^2, +, \circ)$ is one of the following:

1) $(\mathbb{Z}^2, \circ) = (\mathbb{Z}^2 , +)$.

2) $(\mathbb{Z}^2, \circ)$ is isomorphic to $(\mathbb{Z}^2 , +)$.

3)  $(\mathbb{Z}^2, \circ)$ is isomorphic to the Klein bottle group.
\end{thm}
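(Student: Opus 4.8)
The plan is to complete the case analysis carried out in this section. Let $(\mathbb{Z}^2, +, \circ)$ be a $\lambda$-cyclic brace, so that $\lambda : (\mathbb{Z}^2, +) \to \Aut(\mathbb{Z}^2, +)$ is a homomorphism with $\im \lambda \subseteq \gen{\varphi}$ for some $\varphi \in \Aut(\mathbb{Z}^2, +)$ and $H_{\lambda} = \{(\lambda_a, a) \mid a \in \mathbb{Z}^2\}$ is a regular subgroup of $\Hol \mathbb{Z}^2$. First I would reduce to the normalised situation $\lambda_a = \varphi^{l(a)}$ to which Proposition \ref{cycprop2} applies, and then use that proposition to pin down the shape of $\varphi$: it must satisfy $\varphi(x_i) \equiv x_i \pmod{\mathbb{Z}^2_0}$ for $i = 1, 2$, and since $\mathbb{Z}^2_0 = \gen{x_1 - x_2}$ this forces
\[
\varphi(x_1) = x_1 + p(x_1 - x_2), \qquad \varphi(x_2) = x_2 + q(x_1 - x_2)
\]
for integers $p, q$. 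Imposing $\det[\varphi] = p - q + 1 = \pm 1$ then yields exactly the two families $q = p$ (so $\varphi$ has infinite order, with the matrix recorded above) and $q = p + 2$ (so $\varphi^2 = E$).

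Next I would read off the structure of $(\mathbb{Z}^2, \circ)$ in each family from the computations already made, with $a \circ b = a + \varphi^{l(a)}(b)$. In the first family $(\mathbb{Z}^2, \circ)$ was shown to be generated, as an abelian group, by $x_1$ and $x_1 - x_2$, which commute under $\circ$; hence it is free abelian of rank $2$. If $p = 0$ then $\varphi = id$ and $\circ$ coincides with $+$, which is case 1); if $p \neq 0$ then $x_1 \circ x_1 = (2+p)x_1 - p x_2 \neq x_1 + x_1$, so $\circ \neq +$ while $(\mathbb{Z}^2, \circ) \cong (\mathbb{Z}^2, +)$, which is case 2). In the second family, the presentation obtained above identifies $(\mathbb{Z}^2, \circ)$ with $\gen{x_1, z_2 \mid \bar x_1 \circ z_2 \circ x_1 = \bar z_2}$, the Klein bottle group, which is case 3). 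Since the two families exhaust all admissible $\varphi$, every such brace falls into one of the three cases; it is also worth recording that each case genuinely occurs, e.g. via $p = 0$ and $p = 1$ in the first family and $p = 0$ in the second.

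The step that deserves real care — everything else being routine bookkeeping — is the opening reduction. A priori a $\lambda$-cyclic brace on $\mathbb{Z}^2$ corresponds to $\lambda_a = \varphi^{\mu(a)}$ for an arbitrary homomorphism $\mu : \mathbb{Z}^2 \to \gen{\varphi}$, not necessarily $\mu = l$, so Proposition \ref{cycprop2} does not apply verbatim. One must either show that, after replacing $\varphi$ by a generator of $\im \lambda$ and changing the free basis of $\mathbb{Z}^2$ (which does not affect the isomorphism type of $(\mathbb{Z}^2, \circ)$), one is back in the setting of Proposition \ref{cycprop2}, or else check directly that regularity of $H_{\lambda}$ again forces $\varphi$ to fix every basis vector modulo $\mathbb{Z}^2_0$ and that the resulting $(\mathbb{Z}^2, \circ)$ is still either free abelian of rank $2$ or the Klein bottle group. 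I expect this bookkeeping, rather than any structural difficulty, to be the main obstacle, and would dispose of it carefully before invoking the case split above.
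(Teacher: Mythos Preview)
Your proposal is correct and follows exactly the paper's approach: invoke Proposition~\ref{cycprop2} to force $\varphi(x_i)\equiv x_i\pmod{\mathbb{Z}^2_0}$, split into the two determinant cases $q=p$ and $q=p+2$, and read off the computations already done in the section to obtain $(\mathbb{Z}^2,+)$, a group isomorphic to $(\mathbb{Z}^2,+)$, or the Klein bottle group. Your caution about the opening reduction is well placed --- the paper simply applies Proposition~\ref{cycprop2} without explaining why an arbitrary $\lambda$-cyclic brace on $\mathbb{Z}^2$ may be assumed to have $\lambda_a=\varphi^{l(a)}$ --- and the basis-change argument you sketch (choose $\varphi$ to generate $\im\lambda$, then pick a basis $y_1,y_2$ with $\lambda_{y_1}=\lambda_{y_2}=\varphi$) is the right way to close that gap.
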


\medskip

We again get back to the general case. 
Let $A := (\mathbb{Z}^n, +) = \gen{x_1, \ldots, x_n}$ be the free abelian group of rank $n$ and  $\psi \in \Aut A$  the cyclic permutation of free generators given by
$$
\psi :
\left\{
\begin{array}{ll}
x_i \to x_{i+1} &  ~\mbox{for} ~i = 1, 2, \ldots, n-1,\\
x_n \to x_1. &
\end{array}
\right.
$$
An arbitrary element of $A$ is of the form $a = \sum_{i=1}^n \alpha_i x_i$, where $\alpha_i \in \mathbb{Z}$.
Notice that $A$ is a homogeneous group and $\psi$ satisfies
$$
l(\psi (x_i)) = 1, \quad i = 1, \ldots, n.
$$
Hence, by Theorem \ref{lcb}, $H_{\lambda}$ is a regular subgroup of $\Hol A$, where $\lambda : A \to \Aut A$,  $a \mapsto \lambda_a$ with  $\lambda_a = \psi^{\sum\limits_{i=1}^{n} \alpha_i}$, is a homomorphism. Thus we get a $\lambda$-cyclic brace $(A, +, \circ)$, where `$\circ$' is defined by
$$
a \circ b = a+\psi^{\sum\limits_{i=1}^{n} \alpha_i}(b),
$$
for all
$a = \sum_{i=1}^n \alpha_i x_i, ~~~b = \sum_{i=1}^n \beta_i x_i \in A$.

We'll now find the structure of the multiplicative group $(A, \circ)$. Notice that
$$
x_1 \circ x_2=x_1+x_3 \;\;\mbox{ and } \;\; x_2 \circ x_1=2x_2.
$$
Thus $x_1 \circ x_2\neq x_2 \circ x_1$, and therefore $(A, \circ)$ is non-abelian.
Let
$$
A_0 =
\left\{ \, \sum\limits_{i=1}^{n} \alpha_i x_i \in A \mid
\sum\limits_{i=1}^{n} \alpha_i \equiv 0 \,\;(\mathrm{mod}\, n)  \, \right\}.
$$
Since $\psi^n=id$, the operations `$+$' and `$\circ$' coincide on $A_0$. It is not difficult to see that $A_0 = \Ker  \lambda$.

We have the following decomposition of  $A$ into the union of cosets of $A_0$:
$$
A =
A_0 \bigsqcup (x_1 + A_0) \bigsqcup \ldots \bigsqcup ((n-1) x_1 + A_0).
$$
Since $\psi(A_0)=A_0$, it follows that
$$
(kx_1)\circ A_0 =(kx_1) + A_0, \quad k\in \mathbb{Z}.
$$
Hence, we get the following  decomposition of   $(A, \circ)$  into the union of cosets  of $A_0$ :
$$
(A, \circ)= A_0 \bigsqcup (x_1 \circ A_0) \bigsqcup \ldots \bigsqcup (((n-1)x_1)\circ A_0).
$$
Moreover, notice that
$$
kx_1\equiv x_1^{\circ k} \;(\mathrm{mod}\, A_0), \quad k=1, 2, \ldots, n.
$$
Hence,
$$
(A_0, \circ) = A_0 \bigsqcup (x_1\circ A_0) \bigsqcup \ldots \bigsqcup (x_1^{\circ (n-1)} \circ A_0),
$$
and  therefore $(A, \circ)$ is generated by element  $x_1$ and the subgroup  $(A_0, \circ)$.

One can take
$$
\{z_1:=x_1+\cdots+x_n, \quad z_2:=x_2-x_1, \ldots,  z_n:=x_n-x_{n-1}\}
$$
as a set of free generators of  $(A_0, \circ)$. Let us denote $\bar a \circ \bar b \circ a \circ b$ by $[a, b]$ for all $a , b \in (A, \circ)$. We have relations
$$
x_1^{\circ n} =z_1, \quad [z_i, z_j]=1, \quad 1 \le i, j \le n,
$$
\begin{eqnarray*}
x_1\circ z_k &=&x_1\circ (x_k-x_{k-1})=x_1+ x_{k+1}-x_k =z_{k+1}+x_1\\
&=& z_{k+1}\circ x_1, \;\; 2 \le k \le n-1
\end{eqnarray*}
and
\begin{eqnarray*}
x_1\circ z_n &=& x_1\circ (x_n-x_{n-1})=x_1+x_1-x_n=(x_1-x_n) \circ x_1\\
&=& (\overline{z_2+z_3+\cdots+z_n}) \circ x_1= \bar z_2\circ \bar z_3\circ \cdots \circ \bar z_n \circ x_1.
\end{eqnarray*}
We have proved that  $(A, \circ)$ has a presentation
\begin{eqnarray*}
(A, \circ) &= & \left \langle \, x_1,z_1,\ldots,z_n \mid z_1=x_1^{\circ n},\, [z_i,z_j]=1 \,\; (1 \le i,j \le n), \; x_1 \circ z_k \circ \bar x_1=z_{k+1}
\right.\\
& &  \left. \; \; (2 \le k \le n-1),\, x_1 \circ z_n  \circ \bar x_1 = \bar z_2 \circ \bar z_3 \circ \cdots \circ \bar z_n\, \right\rangle.
\end{eqnarray*}

Let us simplify the presentation of $(A, \circ)$ for   $n\geq 2$ as follows:
\begin{eqnarray*}
(A, \circ) &=& \left\langle \, x_1,z_2,\ldots,z_n \mid [x_1^{\circ n},z_2]= \cdots  = [x_1^{\circ n},z_n]=1,\,\, [z_i,z_j]=1 \,\, (2 \le i,j \le n), \right.\\
& & \left. z_3=x_1 \circ z_2 \circ \bar x_1, \ldots, z_n=x_1 \circ z_{n-1} \circ \bar x_1,\,\,
x_1 \circ z_n \circ \bar x_1 = \bar z_2 \circ \bar z_3 \circ \cdots \bar \circ z_n \,\right\rangle\\
&=& \left\langle \, x_1,\,\,z_2 \mid
[x_1^{\circ n},z_2]=[x_1^{\circ n},x_1\circ z_2 \circ \bar x_1]= \cdots = [x_1^{\circ n},x_1^{\circ(n-2)} \circ z_2 \circ \bar x_1^{\circ (n-2)}]=1, \right.\\
& &
[z_2,x_1 \circ z_2 \circ \bar x_1]= \cdots = [z_2,x_1^{\circ (n-2)} \circ z_2 \circ \bar x_1^{\circ (n-2)}]=1, \;\ldots,\\
& &[x_1^{\circ(n-3)} \circ z_2 \circ \bar x_1^{\circ(n-3)},x_1^{\circ (n-2)}\circ z_2\circ \bar x_1^{\circ(n-2)}]=1,\\
& & \left. x_1^{\circ (n-1)}\circ z_n \circ \bar x_1^{\circ (n-1)}=\overline{z_2 \circ x_1\circ z_2 \circ \bar x_1 \circ \cdots \circ x_1^{\circ (n-2)}\circ z_2 \circ \bar x_1^{\circ (n-2)}}
\,\right\rangle
\end{eqnarray*}
Setting  $x = x_1$ and $z = z_2$, we get
\begin{eqnarray*}
(A, \circ) &=& \left\langle \, x,\, z \mid [x^{\circ n},z]=1,\, [z, x\circ z \circ \bar x]= \cdots =[z, x^{\circ (n-2)}\circ z \circ \bar x^{\circ (n-2)}]=1, \right.\\
& & \left. \bar x \circ z \circ  x= \overline{(z \circ x)^{\circ (n-1)} \circ \bar x^{\circ (n-1)}}
\,\right\rangle\\
&=&
\left\langle \, x,\, z \mid [x^{\circ n},z]=1,\, [z, x^{\circ k}\circ z \circ \bar x^{\circ k}]=1 \;\;(1 \le k \le n-2), \right.\\
& & \left. \bar x \circ z \circ x = x^{\circ (n-1)} \circ (\overline{z \circ x})^{\circ (n-1)} \,\right\rangle\\
& = &
\left\langle \, x,\,z \mid [x^{\circ n}, z]=1,\, [z, x^{\circ k}\circ z \circ \bar x^{\circ k}]=1\,\;(1 \le k \le n-2),\, (\overline{z \circ x})^{\circ n}=x^{\circ n} \,\right\rangle.
\end{eqnarray*}
Since
\begin{eqnarray*}
[z,x^{\circ k}\circ z\circ \bar x^{\circ k}] &=& x^{\circ k} \circ [\bar x^{\circ k}\circ z\circ x^{\circ k}, z]\circ \bar x^{\circ k} = x^{\circ k} \circ [z, [z, x^{\circ k}], z] \circ \bar x^{\circ k}\\
&=&x^{\circ k}\circ [z,x^{\circ k}, z] \circ \bar x^{\circ k},
\end{eqnarray*}
 finally we get the following presentation of $(A, \circ)$:
$$
(A, \circ) = \left\langle \, x,\,z \mid [x^{\circ n},z] = 1,\,(z \circ x)^{\circ n}=x^{\circ n},\,
[z,x^{\circ k}, z]=1\,(1 \le k \le n-2) \,\right\rangle.
$$

We have proved

\begin{thm}
Let $A \cong \mathbb{Z}^n$ be the free abelian group with free generators
 $x_1, x_2, \ldots,x_n$, $n\geq 2$, and  $\psi \in \Aut A$ be the  automorphism defined by cyclic permutation of free generators:
$$
\psi :
\left\{
\begin{array}{ll}
x_i \to x_{i+1} &  ~\mbox{for} ~i = 1, 2, \ldots, n-1,\\
x_n \to x_1. &
\end{array}
\right.
$$
Then $(A, +, \circ)$ is a $\lambda$-cyclic brace with
$$
a\circ b=a+\psi^{\sum\limits_{i=1}^{n} \alpha_i} (b), \quad a =\sum_{i=1}^n \alpha_i x_i, b \in A.
$$
Moreover,
$$
(A, \circ) = \left\langle \, x,\,z \mid [x^{\circ n},z] = 1,\,(z \circ x)^{\circ n}=x^{\circ n}, \,
[z,x^{\circ k}, z]=1 \,(1 \le k \le n-2) \,\right\rangle,
$$
where  $x=x_1$ and $z=x_2-x_1$.
\end{thm}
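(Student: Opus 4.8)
The plan is to first deduce from Theorem \ref{lcb} that $(A,+,\circ)$ is a $\lambda$-cyclic brace, and then to obtain the presentation of $(A,\circ)$ by exhibiting this group as a cyclic extension of $A_0=\Ker\lambda$ and simplifying the resulting standard presentation.

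First I would check the hypotheses of Theorem \ref{lcb}: the group $A$ is homogeneous (its defining commutator relations $[x_i,x_j]=1$ have zero logarithm), and $l(\psi(x_i))=l(x_{i+1})=1$ for all $i$, with indices read modulo $n$. Hence $H_{\lambda}=\{(\psi^{l(a)},a)\mid a\in A\}$ is a regular subgroup of $\Hol A$ and, by Theorem \ref{gv2017}, the operation $a\circ b=a+\psi^{l(a)}(b)=a+\psi^{\sum\alpha_i}(b)$ makes $(A,+,\circ)$ a skew brace. Since $\lambda:(A,+)\to\Aut A$, $a\mapsto\psi^{l(a)}$, is a homomorphism and $\im\lambda=\gen{\psi}$ is cyclic of order $n$, this brace is $\lambda$-cyclic.

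For the presentation I would proceed in four steps. \emph{(i)} Identify $A_0:=\Ker\lambda=\{\sum\alpha_ix_i\mid\sum\alpha_i\equiv0\pmod{n}\}$; since $\psi^n=\mathrm{id}$, the operations $+$ and $\circ$ agree on $A_0$, so $(A_0,\circ)$ is the free abelian group of rank $n$ with basis $z_1=x_1+\cdots+x_n$, $z_2=x_2-x_1,\ldots,z_n=x_n-x_{n-1}$. \emph{(ii)} Since $A_0\unlhd(A,\circ)$ and $(A,\circ)/A_0$ is cyclic of order $n$ generated by the class of $x_1$ (note $kx_1\equiv x_1^{\circ k}\pmod{A_0}$ and $x_1^{\circ n}=z_1\in A_0$), the group $(A,\circ)$ is generated by $x_1$ together with $z_2,\ldots,z_n$. \emph{(iii)} Using $a\circ b=a+\psi^{l(a)}(b)$ and $l(z_k)=0$, compute the conjugation action of $x_1$: one finds $x_1\circ z_k\circ\bar x_1=z_{k+1}$ for $2\le k\le n-1$ and $x_1\circ z_n\circ\bar x_1=\bar z_2\circ\cdots\circ\bar z_n$; combining these with $z_1=x_1^{\circ n}$, the relations $[z_i,z_j]=1$, and the standard presentation of a cyclic extension gives
\[
(A,\circ)=\gen{\,x_1,z_1,\ldots,z_n\mid z_1=x_1^{\circ n},\ [z_i,z_j]=1,\ x_1\circ z_k\circ\bar x_1=z_{k+1}\ (2\le k\le n-1),\ x_1\circ z_n\circ\bar x_1=\bar z_2\circ\cdots\circ\bar z_n\,}.
\]
\emph{(iv)} Apply Tietze transformations: use $z_{k+1}=x_1\circ z_k\circ\bar x_1$ to eliminate $z_3,\ldots,z_n$ (so $z_k=x_1^{\circ(k-2)}\circ z_2\circ\bar x_1^{\circ(k-2)}$) and $z_1=x_1^{\circ n}$ to eliminate $z_1$. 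Writing $x=x_1$, $z=z_2$, the relations $[z_1,z_k]=1$ collapse to $[x^{\circ n},z]=1$; the relations $[z_i,z_j]=1$ with $2\le i<j\le n$ become $[z,x^{\circ k}\circ z\circ\bar x^{\circ k}]=1$ for $1\le k\le n-2$, which by the identity $[z,x^{\circ k}\circ z\circ\bar x^{\circ k}]=x^{\circ k}\circ[z,x^{\circ k},z]\circ\bar x^{\circ k}$ are equivalent to $[z,x^{\circ k},z]=1$; and the last relation rearranges to $(z\circ x)^{\circ n}=x^{\circ n}$. This produces exactly the asserted presentation.

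The step I expect to be the main obstacle is \emph{(iv)}: carrying out the Tietze reduction so that no relation is lost, and confirming that the short final list of relations still presents $(A,\circ)$ and not some proper quotient. The delicate point is the bookkeeping for the commutator relations --- rewriting each $[z_i,z_j]=1$ in terms of $x$ and $z$, checking that all of them (for $i<j$) follow from the $n-2$ relations $[z,x^{\circ k},z]=1$, and correctly simplifying the last relation. As a safeguard one may instead verify directly that the group defined by the final presentation fits into a short exact sequence $1\to\mathbb{Z}^n\to G\to\mathbb{Z}_n\to1$ with the same conjugation action as above, which forces $G\cong(A,\circ)$.
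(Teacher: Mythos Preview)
Your proposal is correct and follows essentially the same route as the paper: invoke Theorem~\ref{lcb} for the brace structure, identify $A_0=\Ker\lambda$ with basis $z_1,\ldots,z_n$, use the cyclic extension $A_0\unlhd(A,\circ)$ with quotient generated by $x_1$ to get a presentation on $x_1,z_1,\ldots,z_n$, and then perform exactly the Tietze eliminations and commutator rewritings you describe. The paper carries out step~(iv) by the same computations (including the identity $[z,x^{\circ k}\circ z\circ\bar x^{\circ k}]=x^{\circ k}\circ[z,x^{\circ k},z]\circ\bar x^{\circ k}$), so your anticipated obstacle is real but routine; your safeguard via the short exact sequence is a nice extra check that the paper does not include.
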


\bigskip


\section{Symmetric  skew braces} \label{sym}

We start with the definition of symmetric skew brace.

\begin{definition}
A skew brace $(G, \cdot, \circ)$ is said to {\it symmetric} if  $(G, \circ, \cdot)$ is also a skew brace.
\end{definition}

These braces were introduced in the paper of Childs \cite{Chi} (under the name bi-skew brace) and studied by Caranti \cite{Ca}. In this section we study symmetric skew braces.  We show that $\lambda$-cyclic skew braces and the skew braces constructed by exact factorization are symmetric.

\begin{proposition}\label{symprop1}
A skew brace  $(G, \cdot, \circ)$ is symmetric if and only if
$$
\overline{b}\circ (a\cdot b)\circ \overline{a} \in Ker\, \lambda
$$
for all $a,b \in G$, where $\bar a$ denotes the inverse of $a$ under `$\circ$'.
\end{proposition}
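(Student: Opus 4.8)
The plan is to reduce the symmetry condition to a statement about the single map $\lambda$, and then to recognise that statement as the asserted membership in $\Ker\lambda$. Since $(G,\cdot)$ and $(G,\circ)$ are already groups, $(G,\circ,\cdot)$ is a skew brace if and only if the one axiom
\[
a\cdot (b\circ c) = (a\cdot b)\circ\overline{a}\circ (a\cdot c) \qquad (\star)
\]
holds for all $a,b,c\in G$. So the whole proof amounts to showing that $(\star)$ for all $a,b,c$ is equivalent to $\overline b\circ(a\cdot b)\circ\overline a\in\Ker\lambda$ for all $a,b$.

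The first key step is the auxiliary identity
\[
\overline a\circ (a\cdot c)=\lambda_a^{-1}(c)\qquad\text{for all } a,c\in G.
\]
I would prove it by writing $\overline a\circ(a\cdot c)=\overline a\cdot\lambda_{\overline a}(a\cdot c)=\overline a\cdot\lambda_a^{-1}(a\cdot c)$, using $x\circ y=x\cdot\lambda_x(y)$ together with the fact that $\lambda$ is a group homomorphism from $(G,\circ)$, whence $\lambda_{\overline a}=\lambda_a^{-1}$; then expanding $\lambda_a^{-1}(a\cdot c)=\lambda_a^{-1}(a)\cdot\lambda_a^{-1}(c)$ and using $\overline a=\lambda_a^{-1}(a^{-1})=\big(\lambda_a^{-1}(a)\big)^{-1}$, so that the factor $\lambda_a^{-1}(a)$ cancels.

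Now I would substitute into $(\star)$. The left-hand side becomes $a\cdot b\cdot\lambda_b(c)$, while by the auxiliary identity and $x\circ y=x\cdot\lambda_x(y)$ the right-hand side becomes $(a\cdot b)\circ\lambda_a^{-1}(c)=(a\cdot b)\cdot\lambda_{a\cdot b}\big(\lambda_a^{-1}(c)\big)$. Cancelling the common left factor $a\cdot b$ in the group $(G,\cdot)$, the axiom $(\star)$ for all $a,b,c$ is seen to be equivalent to $\lambda_b(c)=\lambda_{a\cdot b}\big(\lambda_a^{-1}(c)\big)$ for all $a,b,c$, i.e. to
\[
\lambda_{a\cdot b}=\lambda_b\,\lambda_a\qquad\text{for all } a,b\in G.
\]

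Finally, since $\lambda$ restricted to $(G,\circ)$ is a homomorphism and $\lambda_{\overline a}=\lambda_a^{-1}$, one has $\lambda_{\overline b\circ(a\cdot b)\circ\overline a}=\lambda_b^{-1}\,\lambda_{a\cdot b}\,\lambda_a^{-1}$, which is the identity automorphism exactly when $\lambda_{a\cdot b}=\lambda_b\lambda_a$. Chaining the two equivalences yields the proposition. The only delicate point is the derivation of the auxiliary identity $\overline a\circ(a\cdot c)=\lambda_a^{-1}(c)$ and keeping careful track throughout of which inverse is taken in which of the two group structures; the cancellations themselves are legitimate simply because $(G,\cdot)$ is a group.
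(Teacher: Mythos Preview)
Your proof is correct and follows essentially the same route as the paper: both reduce symmetry to the single axiom $a\cdot(b\circ c)=(a\cdot b)\circ\overline a\circ(a\cdot c)$, rewrite each side via $x\circ y=x\cdot\lambda_x(y)$, and cancel to reach $\lambda_{a\cdot b}=\lambda_b\lambda_a$ (equivalently $\lambda_b=\lambda_{(a\cdot b)\circ\overline a}$), which is then identified with $\overline b\circ(a\cdot b)\circ\overline a\in\Ker\lambda$ using that $\lambda$ is a $(G,\circ)$-homomorphism. The only difference is cosmetic: you package the simplification $\overline a\circ(a\cdot c)=\lambda_a^{-1}(c)$ as a separate auxiliary identity, whereas the paper performs the equivalent cancellation inline via $\lambda_{\overline a}(a)=\overline a^{\,-1}$.
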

\begin{proof}
Let $(G, \cdot, \circ)$ be a skew brace. Then the map $\lambda : (G, \circ) \to \Aut (G, \cdot)$,  $a \mapsto \lambda_a$, is a homomorphism, where 
\begin{equation}\label{eqn1sec5}
\lambda_a(b)=a^{-1} \cdot (a\circ b)
\end{equation}
 for all $a, b \in G$. Now $( G, \circ, \cdot)$ is a skew brace if and only if 
$$
a\cdot(b\circ c)=(a\cdot b)\circ \overline{a}\circ (a\cdot c)
$$
for all $a, b, c \in G$.
 For the simplicity of notation, we'll suppress the use of `$\cdot$'.
Using \eqref{eqn1sec5}, we get
\begin{eqnarray*}
a (b\circ c)=(a b)\circ \overline{a}\circ (a c) & \Longleftrightarrow &
ab \lambda_b(c)=ab \lambda_{ab} (\overline{a}\lambda_{\overline{a}} (ac))\\
& \Longleftrightarrow &
\lambda_b(c)=\lambda_{ab} (\overline{a}\lambda_{\overline{a}} (a)\lambda_{\overline{a}} (c)).
\end{eqnarray*}
Since $\lambda_{\overline{a}} (a)={\overline{a}}^{-1}(\overline{a} \circ a)={\overline{a}}^{-1}$, we have
\begin{eqnarray*}
\lambda_b(c)=\lambda_{ab} (\lambda_{\overline{a}} (c)) &\Longleftrightarrow&
\lambda_b(c)=\lambda_{(ab)\circ \overline{a}} (c)\\
&\Longleftrightarrow&
\lambda_{\overline{b} \circ (ab)\circ \overline{a}} (c)=1.
\end{eqnarray*}
Since $a, b, c$ are the arbitrary elements of $G$, the assertion holds.
\end{proof}

\medskip

\begin{cor}\label{symcor1}
The equality $\lambda_{\overline{b} \circ (ab)\circ \overline{a}} =id$
is equivalent to any of the equalities
$$
\lambda_{\overline{a}\circ\overline{b} \circ (ab) }=id,\quad
\lambda_{b\circ a  }=\lambda_{ab } ,\quad
\lambda_{ (\overline{ab}) \circ (b \circ a)}=id.
$$
\end{cor}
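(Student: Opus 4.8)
The plan is to reduce the corollary to elementary bookkeeping inside the groups $(G, \circ)$ and $\Aut(G, \cdot)$, using only that $\lambda : (G, \circ) \to \Aut(G, \cdot)$ is a group homomorphism, so that $\lambda_{x \circ y} = \lambda_x \lambda_y$ and $\lambda_{\bar x} = \lambda_x^{-1}$ for all $x, y \in G$, and that $\Ker \lambda$ is therefore a normal subgroup of $(G, \circ)$. Each of the four displayed conditions is, after moving everything to one side, the assertion that a specific element of $(G, \circ)$ lies in $\Ker \lambda$, or a direct rewriting of such an assertion via the homomorphism property.

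First I would record the trivial lemma that for a normal subgroup $K \trianglelefteq (G, \circ)$ and arbitrary $u, v, w \in G$ one has $u \circ v \circ w \in K$ if and only if $w \circ u \circ v \in K$, since $w \circ u \circ v = w \circ (u \circ v \circ w) \circ \bar w$ is a $\circ$-conjugate of $u \circ v \circ w$. Taking $K = \Ker \lambda$, $u = \bar b$, $v = ab$, $w = \bar a$ then turns $\lambda_{\bar b \circ (ab) \circ \bar a} = id$ into $\lambda_{\bar a \circ \bar b \circ (ab)} = id$, which is the first of the three alternative equalities.

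Next, applying the homomorphism property gives $\lambda_{\bar a \circ \bar b \circ (ab)} = \lambda_a^{-1} \lambda_b^{-1} \lambda_{ab}$, so the condition $\lambda_{\bar a \circ \bar b \circ (ab)} = id$ is equivalent to $\lambda_{ab} = \lambda_b \lambda_a$; and since $\lambda_b \lambda_a = \lambda_{b \circ a}$, this is precisely the second alternative equality $\lambda_{b \circ a} = \lambda_{ab}$. Finally, $\lambda_{(\overline{ab}) \circ (b \circ a)} = \lambda_{ab}^{-1} \lambda_{b \circ a}$, so the last alternative equality $\lambda_{(\overline{ab}) \circ (b \circ a)} = id$ is again nothing but $\lambda_{ab} = \lambda_{b \circ a}$. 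Concatenating these equivalences yields the claim, and in fact shows that all four conditions coincide.

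I do not anticipate a genuine obstacle; the only points requiring care are the consistent treatment of inverses in $(G, \circ)$ as opposed to $(G, \cdot)$, and the order in which $\lambda$ converts $\circ$-products into compositions of automorphisms — one should verify $\lambda_{b \circ a} = \lambda_b \lambda_a$ against the convention fixed in Theorem \ref{gv2017} and used in Proposition \ref{symprop1}, adjusting the order of factors accordingly throughout.
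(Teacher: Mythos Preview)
Your argument is correct and is exactly the routine verification the paper has in mind: the corollary is stated without proof immediately after Proposition~\ref{symprop1}, and your use of the homomorphism identity $\lambda_{x\circ y}=\lambda_x\lambda_y$ together with normality of $\Ker\lambda$ in $(G,\circ)$ supplies precisely the missing details. The caveat you raise about the order convention is harmless here, since the paper fixes $\lambda_{a\circ b}=\lambda_a\lambda_b$, consistent with your computation $\lambda_{\bar a\circ\bar b\circ(ab)}=\lambda_a^{-1}\lambda_b^{-1}\lambda_{ab}$.
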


\medskip
We first present some simple examples.
\begin{example}
Consider a   skew brace $( \mathbb{Z}, +, \circ)$,
where  `$+$'  is the addition of integers and  `$\circ$' is defined by the formula
$$
m \circ n=m+(-1)^{m}n, \quad   m,n \in \mathbb{Z}.
$$
In this case $\mathrm{Ker}\, \lambda=2\mathbb{Z}$.

Note that the following equality holds
$$
m\circ n\equiv m + n \, (\mathrm{mod}\, 2),~~~
\overline{m}= (-1)^{m+1}m \equiv m  \, (\mathrm{mod}\, 2), \quad   m,n \in \mathbb{Z}.
$$
Thus,
$$
\overline{m} \circ (m+n)\circ \overline{n} \equiv 2m+2n \equiv 0\, (\mathrm{mod}\, 2).
$$
Hence, by Proposition \ref{symprop1},  the algebraic system $(\mathbb{Z}, \circ, + )$ is a  skew brace.
\end{example}

\medskip

\begin{example}
Consider the skew brace $(\mathbb{Z}^n, +, \circ)$,
which is constructed on the  free abelian group  $\mathbb{Z}^n=\left\langle x_1,\ldots,x_n\right\rangle$ of rank $n$ using the automorphism   $\varphi\in \Aut \, \mathbb{Z}^n$ such that
$$
\varphi(x_i) \equiv x_i\, (\mathrm{mod}\, \mathbb{Z}^n_0), \quad i=1,\ldots,n
$$
and
$$
\mathbb{Z}^n_0=
\left\{\, \sum\limits_{i=1}^{n} a_ix_i \in  \mathbb{Z}^n \, | \, \sum a_i=0 \,\right\}=\mathrm{Ker}\, \lambda.
$$

Since
$$
a \circ a=a+\varphi^{\sum a_i}(b)\equiv a+b \, (\mathrm{mod}\, \mathbb{Z}^n_0)
$$
and
$$
\overline{a}=-\varphi^{-\sum a_i}(a)\equiv -a \, (\mathrm{mod}\, \mathbb{Z}^n_0),
$$
then
$$
\overline{b} \circ (a+b)\circ \overline{a} \equiv -b+a+b-a \equiv 0\, (\mathrm{mod}\, \mathbb{Z}^n_0).
$$
Hence, by Proposition \ref{symprop1}, the algebraic system $(\mathbb{Z}^n, \circ, +)$ is a  skew brace.
\end{example}

More examples can be obtained by
\begin{thm} \label{t6.6}
Every $\lambda$-cyclic skew brace is symmetric.
\end{thm}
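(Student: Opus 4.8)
The plan is to reduce the assertion, through Proposition \ref{symprop1} and Corollary \ref{symcor1}, to the single identity $\lambda_{b \circ a} = \lambda_{ab}$ holding for all $a, b \in G$, and then to observe that this identity is automatic as soon as $\im \lambda$ is abelian --- which it certainly is when it is cyclic.

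First I would recall that in a $\lambda$-homomorphic skew brace $(G, \cdot, \circ)$ the map $\lambda$ is a group homomorphism with respect to \emph{both} operations: on $(G, \circ)$ by the general theory of \cite{GV2017} (Theorem \ref{gv2017}), so that $\lambda_{b \circ a} = \lambda_b \lambda_a$; and on $(G, \cdot)$ by the very definition of $\lambda$-homomorphic, so that $\lambda_{ab} = \lambda_a \lambda_b$. Consequently $\lambda_{b \circ a} = \lambda_{ab}$ for all $a, b \in G$ precisely when $\lambda_a$ and $\lambda_b$ commute in $\Aut(G, \cdot)$ for all $a, b$, i.e. when the subgroup $\im \lambda$ of $\Aut(G, \cdot)$ is abelian.

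Now suppose $(G, \cdot, \circ)$ is $\lambda$-cyclic, so that $\im \lambda = \langle \varphi \rangle$ for some $\varphi \in \Aut(G, \cdot)$. Then $\im \lambda$ is abelian, hence $\lambda_{b \circ a} = \lambda_{ab}$ for all $a, b \in G$. By Corollary \ref{symcor1} this equality is equivalent to $\lambda_{\overline{b} \circ (ab) \circ \overline{a}} = id$, that is, $\overline{b} \circ (ab) \circ \overline{a} \in \Ker \lambda$ for all $a, b \in G$. Proposition \ref{symprop1} then shows that $(G, \circ, \cdot)$ is again a skew brace, i.e. $(G, \cdot, \circ)$ is symmetric.

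I do not expect a genuine obstacle here; the argument is essentially formal. The only point requiring care is the bookkeeping of the two homomorphism properties of $\lambda$ (one for each operation) together with keeping the composition order consistent with the form of the identity recorded in Corollary \ref{symcor1}.
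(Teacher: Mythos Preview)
Your proof is correct and follows essentially the same route as the paper: both reduce, via Proposition \ref{symprop1} and Corollary \ref{symcor1}, to the identity $\lambda_{b\circ a}=\lambda_{ab}$ and then verify it using that $\lambda$ is a homomorphism for each of the two operations together with the commutativity of $\im\lambda$. The paper packages the commutativity step through Theorem \ref{t1} and the inclusion $(G,\cdot)'\subseteq\Ker\lambda$, while you argue directly from $\lambda_b\lambda_a=\lambda_a\lambda_b$; this is only a cosmetic difference, though your formulation makes transparent that the hypothesis ``cyclic'' may be weakened to ``$\im\lambda$ abelian'' without change.
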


\begin{proof}
Let   $(G, \cdot, \circ)$ be a $\lambda$-cyclic skew brace. Then $H_\lambda$ is a regular subgroup of $\Hol (G, \cdot)$ and $\lambda : (G, \cdot) \to \Aut (G, \cdot)$ is a homomorphism. Thus, by Theorem \ref{t1},  we have
$$
\lambda(b^{-1} \lambda_a(b))=id,
$$
which is equivalent to
$$
\lambda((ab)^{-1} (a\circ b))=id.
$$

Since the image of $\lambda$ is a cyclic subgroup of  $\Aut (G, \cdot)$, it follows that  $\Ker \lambda$ contains the commutator subgroup  of $(G, \cdot)$. Also, $(G, \cdot, \circ)$ being $\lambda$-homomorphic,  $\lambda$ is a homomorphism on $(G, \cdot)$ as well as on $(G, \circ)$.
Hence,
$$
\lambda((ab)^{-1} (a\circ b))=id \Longleftrightarrow \lambda((ba)^{-1} (a\circ b))=id  \Longleftrightarrow \lambda(ba) = \lambda(a\circ b). 
$$
Thus, by Proposition \ref{symprop1} and Corollary \ref{symcor1}, it follows that  $(G, \circ, \cdot)$ is a skew brace. The proof is complete.
\end{proof}

\medskip

Construction of skew braces on groups admitting exact factorization was carried out in \cite[Theorem 2.3]{SV}.
We here construct skew braces  by exact factorization on free product of groups. In particular, we do this on free groups.

Let $C$ and  $B$ be two groups, $G = C \ast B$ their free product and $D$ the kernel of the homomorphism  $C \ast B \to C \times B$.
Set $A=\gen{C, D}$. Then
$$
G=AB,\quad A\cap B=1,
$$
that is,  $G$ admits an exact factorization.

Now we can define the operation `$\circ$' on  $G$ by the rule: If $a_1b_1, a_2b_2 \in G$, where $a_1, a_2 \in A$, $b_1, b_2 \in B$,
then
\begin{equation}\label{efeqn1}
(a_1b_1)\circ (a_2b_2)=a_1a_2b_2b_1.
\end{equation}
It is not very difficult  to see that  $(G, \cdot, \circ)$  is a skew brace such that $(G, \circ)$ is isomorphic to the direct product  $A \times B_{op}$, where  $B_{op}$ is the opposite group of $B$.  If $B$ is abelian, then obviously $(G, \circ) \cong A \times B$. Notice that  `$\circ$' and   `$\cdot$' coincide on $A$.

 Let us  find automorphisms  $\lambda_z \in \mathrm{Aut} \, G$ corresponding to the elements
 $z \in (G, \circ)$. By the definition of $\lambda$, we have
$$
\lambda_z (y)=z^{-1}(z\circ y), \quad y \in G.
$$
If $z=a_1b_1$, $y=a_2b_2$, where  $a_1,a_2 \in A$, $b_1,b_2 \in B$,
then
$$
\lambda_z (y)=(a_1b_1)^{-1}a_1a_2b_2b_1=b_1^{-1} yb_1.
$$
So, the automorphism  $\lambda_z$ is  the inner automorphism of $G$ induced by the element  $b_1$, where $z = a_1b_1$.

We  now consider the map $\lambda: G \to \Aut(G)$ given by $z \mapsto \lambda_z$, where $\lambda_z$ is as defined in the preceding para. A straightforward computation shows that $\lambda$ is an anti-homomorphism. But, if $B$ is abelian, then $\lambda$ is a homomorphism. Hence we have shown

\begin{proposition}\label{efprop1}
The skew brace $(G, \cdot, \circ)$, constructed above, is   $\lambda$-homomorphic  if $B$ is abelian.
\end{proposition}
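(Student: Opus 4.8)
The plan is to make explicit the factorisation $\lambda = \iota \circ \pi$ that is already implicit in the computation preceding the statement, where $\pi$ collapses the $A$-part of an element and $\iota$ sends an element of $B$ to the inner automorphism of $G$ it induces. First I would record the structural fact that $A = \gen{C, D}$ is a \emph{normal} subgroup of $G = C \ast B$ with $G/A \cong B$. This is quick: $D$ is normal in $G$, being a kernel; $G/D \cong C \times B$; and under this isomorphism the image $A/D$ of $A$ is the image of $C$, namely $C \times 1$, which is normal in $C \times B$. Hence $A \normal G$ and $G/A \cong (C\times B)/(C\times 1) \cong B$. Consequently the map
$$ \pi : (G, \cdot) \to B, \qquad \pi(a_1 b_1) = b_1 \quad (a_1 \in A,\ b_1 \in B), $$
is well defined (by the exact factorisation $G = AB$, $A \cap B = 1$) and is a group homomorphism, being nothing but the projection $G \onto G/A \cong B$.

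Next I would reinterpret the identity $\lambda_z(y) = b_1^{-1} y b_1$ (for $z = a_1 b_1$) established above as $\lambda_z = \iota_{\pi(z)}$, where $\iota : B \to \Inn G \le \Aut G$ is given by $\iota_b(y) = b^{-1} y b$. A one-line calculation gives $\iota_b \iota_{b'} = \iota_{b'b}$ for all $b, b' \in B$, so that $\iota$ is an anti-homomorphism. Therefore, for all $z_1, z_2 \in G$,
$$ \lambda(z_1 z_2) = \iota_{\pi(z_1 z_2)} = \iota_{\pi(z_1)\pi(z_2)}, \qquad \lambda(z_1)\lambda(z_2) = \iota_{\pi(z_1)}\iota_{\pi(z_2)} = \iota_{\pi(z_2)\pi(z_1)}, $$
using that $\pi$ is a homomorphism and $\iota$ an anti-homomorphism. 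In general these two automorphisms differ, which recovers the assertion made before the statement that $\lambda$ is an anti-homomorphism. But when $B$ is abelian we have $\pi(z_1)\pi(z_2) = \pi(z_2)\pi(z_1)$, whence the two displayed expressions coincide; thus $\lambda : (G, \cdot) \to \Aut G$ is a homomorphism, i.e. $(G, \cdot, \circ)$ is $\lambda$-homomorphic.

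I expect the only point requiring genuine care to be the normality of $A$ in $G$ (equivalently, the fact that $\pi$ respects the operation $\cdot$); everything after that is formal bookkeeping with the two operations and the definitions of $\pi$ and $\iota$. One could alternatively invoke the standard description $C \ast B = A \rtimes B$ with $A = \ast_{b \in B} b^{-1}Cb$, but the short argument via $D \normal G$ and $G/D \cong C\times B$ is self-contained and seems preferable here.
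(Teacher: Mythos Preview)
Your proof is correct and follows essentially the same approach as the paper: the paper's argument is just the computation preceding the proposition, which shows $\lambda_z = \iota_{b_1}$ for $z = a_1 b_1$ and then asserts that a ``straightforward computation'' gives the anti-homomorphism property, hence a homomorphism when $B$ is abelian. You have simply unpacked that computation via the factorisation $\lambda = \iota \circ \pi$, and in doing so you make explicit the one point the paper leaves implicit --- namely that $A$ is normal in $G$, which is exactly what is needed to know that the $B$-component of $z_1 \cdot z_2$ is $\pi(z_1)\pi(z_2)$.
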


We next take up a specific free product.  Let  $F_n$ be the  free group  with basis $x_1,\ldots,x_n$. Let  $C = F_{n-1} = \gen{x_1, \ldots, x_{n-1}}$ and $B = \gen{x_n}$. Notice that $F_n = C \ast B$. Let
$A = \gen{C, [x_n, F_n]}$. Then
$$
F_n=AB,\quad A \cap B=1,
$$
that is, $F_n$ admits an exact factorization.

It then follows that $(F_n, \cdot, \circ)$ is a skew brace, where  `$\circ$' is defined by \eqref{efeqn1}.
Notice that  the group $(F_n, \circ)$  is isomorphic to the direct product  $A \times B$.
Also the operation `$\circ$' coincides with `$\cdot$' on $A$ as well as on  $B$.  Hence  $A$ is isomorphic to the free group  $F_{\infty}$ of infinite rank and  $B$  to the infinite cyclic group, that is,
$$
(F_n, \circ) \cong F_{\infty} \times \mathbb{Z}.
$$

We have thus proved
\begin{proposition}
There exists a skew brace $(G, \cdot, \circ)$ such that $(G, \cdot)$ is finitely generated, but $(G, \circ)$ is not.
\end{proposition}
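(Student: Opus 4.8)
The plan is to read the statement off the exact-factorization construction carried out in the paragraph above, and to pin down the single point that genuinely requires an argument. Fix $n \geq 2$, put $G = F_n$, and equip it with the skew brace structure $(F_n, \cdot, \circ)$ coming from the exact factorization $F_n = AB$, $A \cap B = 1$, where $C = F_{n-1} = \gen{x_1, \ldots, x_{n-1}}$, $B = \gen{x_n}$, $A = \gen{C, [x_n, F_n]}$, and `$\circ$' is given by \eqref{efeqn1}. Here $(G, \cdot) = F_n$ is visibly finitely generated, so the whole content is the assertion that $(G, \circ)$ is not.

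As recorded in the construction above, and since $B \cong \mathbb{Z}$ is abelian, we have $(F_n, \circ) \cong A \times B \cong A \times \mathbb{Z}$, with `$\circ$' and `$\cdot$' agreeing on $A$. Therefore $(G, \circ)$ is finitely generated if and only if the subgroup $A$ of $F_n$ is finitely generated, and I would reduce the proposition to showing that $A$ is \emph{not} finitely generated.

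To do this I would identify $A$ intrinsically. Let $\nu : F_n \to \mathbb{Z}$ be the homomorphism with $\nu(x_n) = 1$ and $\nu(x_i) = 0$ for $i < n$, i.e.\ the total $x_n$-exponent of a word. Each generator of $A$ (the elements of $C$ and the commutators $[x_n, g]$) lies in $\Ker \nu$, so $A \subseteq \Ker \nu$; conversely, if $g \in \Ker \nu$ and we write $g = ab$ with $a \in A$, $b \in B$, then $\nu(b) = \nu(g) - \nu(a) = 0$, whence $b \in B \cap \Ker \nu = 1$ and $g = a \in A$. Thus $A = \Ker \nu$ is a nontrivial (it contains $C \neq 1$) normal subgroup of infinite index ($F_n/A \cong \mathbb{Z}$) in the free group $F_n$ of rank $n \geq 2$. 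By Nielsen--Schreier $A$ is free, and a Reidemeister--Schreier computation with the Schreier transversal $\{x_n^k : k \in \mathbb{Z}\}$ produces the free basis $\{x_n^k x_i x_n^{-k} : k \in \mathbb{Z},\ 1 \le i \le n-1\}$ of $A$, which is infinite; hence $A \cong F_\infty$. (Alternatively one may argue abstractly: a nontrivial finitely generated normal subgroup of a free group always has finite index, so $A$, being nontrivial, normal and of infinite index, cannot be finitely generated.)

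Consequently $(G, \circ) \cong F_\infty \times \mathbb{Z}$ is not finitely generated while $(G, \cdot) = F_n$ is, which is exactly the claim. The only non-routine step is the last paragraph — establishing that the complement $A$ in the exact factorization is infinitely generated — and that is where I expect the work to lie; once $A$ is recognised as $\Ker \nu$, the rest is the standard structure theory of subgroups of free groups.
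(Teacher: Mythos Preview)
Your proposal is correct and follows exactly the paper's approach: the same exact factorization $F_n = AB$ is used, the same identification $(F_n,\circ)\cong A\times\mathbb{Z}$ is invoked, and the claim reduces to $A\cong F_\infty$. The paper simply asserts this last isomorphism without argument, whereas you supply one (identifying $A=\Ker\nu$ and then applying Reidemeister--Schreier or the infinite-index normal-subgroup fact); so your write-up is, if anything, more complete than the original on the one point that needs justification.
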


Since $\mathbb{Z}$ is cyclic, by Proposition \ref{efprop1} it is easy to prove
\begin{proposition}
The skew brace $(F_n, \cdot, \circ)$ constructed by the exact factorization is $\lambda$-cyclic.
\end{proposition}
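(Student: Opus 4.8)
The plan is to deduce this from Proposition~\ref{efprop1} together with the explicit description of the maps $\lambda_z$ obtained just before it. First, in the present situation $B = \gen{x_n}$ is infinite cyclic, in particular abelian, so Proposition~\ref{efprop1} immediately yields that $(F_n, \cdot, \circ)$ is $\lambda$-homomorphic. Hence it only remains to verify that $\im \lambda$ is a cyclic subgroup of $\Aut (F_n, \cdot)$.

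For this, recall the computation carried out just before Proposition~\ref{efprop1}: for $z = a_1 b_1 \in F_n$ with $a_1 \in A$ and $b_1 \in B$, the automorphism $\lambda_z$ is the inner automorphism of $F_n$ induced by $b_1$, that is, $\lambda_z(y) = b_1^{-1} y b_1$ for all $y \in F_n$. Since $b_1 \in B = \gen{x_n}$, we have $b_1 = x_n^{m}$ for some $m \in \mathbb{Z}$, and therefore $\lambda_z = \iota_{x_n}^{\,m}$, where $\iota_{x_n}$ denotes conjugation by $x_n$. Consequently $\im \lambda \subseteq \gen{\iota_{x_n}}$; and taking $z = x_n$ (so $a_1 = 1$ and $b_1 = x_n$) shows $\iota_{x_n} \in \im \lambda$, whence $\im \lambda = \gen{\iota_{x_n}}$. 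This is a cyclic subgroup of $\Aut(F_n, \cdot)$ (infinite cyclic when $n \ge 2$, since $F_n$ has trivial centre), so $(F_n, \cdot, \circ)$ is $\lambda$-cyclic.

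There is essentially no hard step here: all the substantive work was already done in identifying $\lambda_z$ with conjugation by the $B$-component of $z$, and in our case that component ranges over the cyclic group $\gen{x_n}$. The only minor point to watch is the left/right convention for inner automorphisms, which must be fixed consistently so that $\iota_{x_n^m} = \iota_{x_n}^{\,m}$; with the convention $\iota_a(b) = a^{-1} b a$ used earlier in the paper this identity holds verbatim, since $x_n$ commutes with itself, and the sign choice in any case does not affect cyclicity of the image.
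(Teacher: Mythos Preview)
Your proof is correct and follows precisely the route the paper intends: the paper's own ``proof'' is just the one-line hint ``Since $\mathbb{Z}$ is cyclic, by Proposition~\ref{efprop1} it is easy to prove,'' and you have unpacked exactly that---using Proposition~\ref{efprop1} for $\lambda$-homomorphicity and the description of $\lambda_z$ as conjugation by the $B$-component to identify $\im\lambda$ with $\gen{\iota_{x_n}}$. Your remark on the inner-automorphism convention is a nice extra sanity check but not strictly needed, since cyclicity of the image is all that matters.
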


As an application of exact factorization we construct an example  of  skew brace whose additive group is  non-nilpotent solvable and  multiplicative group is abelian.

\begin{proposition}
There is a skew brace $(G, \cdot, \circ)$ such that $(G, \cdot) = \mathbb{Z} \wr \mathbb{Z}$ is a non-nilpotent metabelian group and $(G,  \circ)$ is a free abelian group of infinite rank.
\end{proposition}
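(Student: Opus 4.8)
The plan is to obtain the skew brace from the exact-factorization construction — formula \eqref{efeqn1} together with the surrounding discussion, in the generality of \cite[Theorem 2.3]{SV} — applied to a factorization of $\mathbb{Z}\wr\mathbb{Z}$ into two abelian subgroups; the direct-product description $(G,\circ)\cong A\times B_{op}$ will then immediately force a free abelian multiplicative group of infinite rank.

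First I would write $(G,\cdot)=\mathbb{Z}\wr\mathbb{Z}$ as the semidirect product $N\rtimes T$, where $N=\bigoplus_{i\in\mathbb{Z}}\mathbb{Z}$ is the base group of the (restricted) wreath product and $T=\gen{t}\cong\mathbb{Z}$ acts on $N$ by the shift $t\cdot e_i=e_{i+1}$. Put $A=N$ and $B=T$. Then every element of $G$ is uniquely of the form $ab$ with $a\in A$, $b\in B$, so $G=AB$ and $A\cap B=1$; this is an exact factorization, and, crucially, both factors are abelian, $A\cong\mathbb{Z}^{(\mathbb{Z})}$ and $B\cong\mathbb{Z}$. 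Next I would define $\circ$ on $G$ by $(a_1b_1)\circ(a_2b_2)=a_1a_2b_2b_1$ as in \eqref{efeqn1}. By the exact-factorization construction, $(G,\cdot,\circ)$ is a skew brace with $(G,\circ)\cong A\times B_{op}$; since $B$ is abelian, $B_{op}=B$, so $(G,\circ)\cong A\times B\cong\mathbb{Z}^{(\mathbb{Z})}\times\mathbb{Z}$, which is a free abelian group of countably infinite rank.

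It then remains to record the properties of the additive group. The group $\mathbb{Z}\wr\mathbb{Z}$ is metabelian because $G/N\cong\mathbb{Z}$ and $N$ are abelian, whence $[G,G]\le N$ is abelian; and it is non-nilpotent because it is finitely generated (by $e_0$ and $t$) while its subgroup $N$ is not finitely generated, which is impossible in a finitely generated nilpotent (indeed polycyclic) group; equivalently, the iterated commutators $[e_0,\underbrace{t,\dots,t}_{k}]$ have supports of size tending to infinity and are therefore nontrivial for every $k$, so $\gamma_{k+1}(G)\ne 1$ for all $k$. I do not expect a genuine obstacle here: once the abelian-by-abelian factorization $G=N\cdot T$ is in place, the skew brace structure and the identification of $(G,\circ)$ are handed to us by \cite[Theorem 2.3]{SV}, and the only assertions that merit a line of justification are the (standard) metabelian and non-nilpotent properties of $\mathbb{Z}\wr\mathbb{Z}$.
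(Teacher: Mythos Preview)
Your proposal is correct and follows essentially the same route as the paper: both apply the exact-factorization construction to the semidirect decomposition $\mathbb{Z}\wr\mathbb{Z}=N\rtimes T$ with $N=\bigoplus_{i\in\mathbb{Z}}\mathbb{Z}$ and $T\cong\mathbb{Z}$, obtaining $(G,\circ)\cong N\times T\cong\mathbb{Z}^{(\mathbb{Z})}$. The paper refers to the factor $N$ as $G'$, which is a slight imprecision (in fact $G'$ is the sum-zero subgroup of $N$, and $G/G'\cong\mathbb{Z}^2$), so your identification of $A$ with the full base group is the cleaner formulation of the same argument.
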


\begin{proof}
Let a group $G$ have a presentation
$$
G=\left\langle\,  x, y_i, i\in \mathbb{Z}  \mid
   y_i^x=y_{i+1},\,\, [y_i,y_j]=1, \, i,j\in \mathbb{Z}\,\right\rangle.
$$
Notice that  $G$ is the direct wreath product  $\mathbb{Z} \wr \mathbb{Z}$ of two infinite cyclic groups.

Since $G/G'\cong \mathbb{Z}$, it follows that $G$ has exact factorization $G=G' \mathbb{Z}$. Hence, it is possible to define an operation `$\circ$'
such that the algebraic system $(G, \cdot, \circ)$ is a  skew brace and
$$
(G, \circ) \cong (G', \cdot) \times \mathbb{Z}.
$$
The commutator subgroup  $G'$ of $G$ is generated by elements  $y_i$, $i\in \mathbb{Z}$ and is isomorphic to the infinite direct product $\mathbb{Z}^{\infty}$.
Hence, $(G, \circ) \cong \mathbb{Z}^{\infty}$. Further, $\gamma_3\, G=\gamma_2\, G$ and $G''=1$. Thus the additive group
$(G, \cdot)$ is solvable but non-nilpotent.
\end{proof}

We next prove that skew braces constructed by exact factorization are symmetric.

\begin{thm}
Let $(G, \cdot, \circ)$ be a skew brace obtained by an exact factorization explained above.
Then  $(G, \cdot, \circ)$  is symmetric.
\end{thm}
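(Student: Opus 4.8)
The plan is to reduce the statement, via Corollary \ref{symcor1} and Proposition \ref{symprop1}, to the single identity $\lambda_{b\circ a}=\lambda_{ab}$ for all $a,b\in G$, and then to verify this identity by a direct rewriting using the $A\cdot B$ factorisation. Recall from the discussion preceding Proposition \ref{efprop1} that every $z\in G$ has a unique expression $z=a'b'$ with $a'\in A$, $b'\in B$, and that $\lambda_z$ is then the inner automorphism $y\mapsto (b')^{-1}yb'$ of $G$ induced by the $B$-component $b'$. Writing $\pi\colon G\to B$ for the (set-theoretic) map $z=a'b'\mapsto b'$, this says that $\lambda_z$ depends only on $\pi(z)$; in particular, $\pi(b\circ a)=\pi(ab)$ immediately forces $\lambda_{b\circ a}=\lambda_{ab}$. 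So the whole problem reduces to checking that $\pi(b\circ a)=\pi(ab)$ for all $a,b$.

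To make these $\pi$-values computable, I would first record that $A$ is normal in $G$: by construction $A=\langle C,D\rangle=CD$ (since $D=\Ker(C\ast B\to C\times B)$ is normal), and $A/D$ corresponds to $C\times 1\trianglelefteq C\times B\cong G/D$, whence $A\trianglelefteq G$. Now fix $a=a_1b_1$ and $b=a_2b_2$ with $a_i\in A$, $b_i\in B$. On one hand, by the definition \eqref{efeqn1} of `$\circ$',
\[
b\circ a=(a_2b_2)\circ(a_1b_1)=a_2a_1\,b_1b_2,
\]
which is already in the normal form $A\cdot B$, so $\pi(b\circ a)=b_1b_2$. On the other hand, using normality of $A$ to write $b_1a_2b_1^{-1}\in A$,
\[
ab=a_1b_1a_2b_2=\bigl(a_1(b_1a_2b_1^{-1})\bigr)(b_1b_2),
\]
which is again in normal form, so $\pi(ab)=b_1b_2$ as well. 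Hence $\pi(b\circ a)=\pi(ab)$, and therefore $\lambda_{b\circ a}=\lambda_{ab}$ for all $a,b\in G$.

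Finally, by Corollary \ref{symcor1} the identity $\lambda_{b\circ a}=\lambda_{ab}$ is equivalent to $\lambda_{\overline b\circ(ab)\circ\overline a}=\mathrm{id}$, that is, to $\overline b\circ(a\cdot b)\circ\overline a\in\Ker\lambda$ for all $a,b\in G$; Proposition \ref{symprop1} then gives that $(G,\circ,\cdot)$ is a skew brace, i.e.\ $(G,\cdot,\circ)$ is symmetric. The only point that needs any care is the normality of $A$, which is exactly what lets one rewrite $ab$ in $A\cdot B$-normal form; everything else is the one-line computation above.
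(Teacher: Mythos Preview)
Your proof is correct and follows essentially the same route as the paper's: both arguments invoke Proposition~\ref{symprop1} together with Corollary~\ref{symcor1}, both rely on the normality of $A$ in $G$, and both boil down to observing that the $B$-component of the relevant element is trivial (equivalently, that the element lies in $A\subseteq\Ker\lambda$). The only cosmetic difference is that you verify the equivalent condition $\lambda_{b\circ a}=\lambda_{ab}$ directly by comparing $B$-components, whereas the paper computes $\bar x_1\circ\bar x_2\circ(x_1x_2)=a_1^{-1}a_2^{-1}a_1b_1a_2b_1^{-1}\in A$; your added justification for the normality of $A$ is a welcome detail that the paper merely asserts.
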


\begin{proof}
Let $G = AB$ be an exact factorization. If  $x=ab \in G$, then $\bar x = a^{-1}b^{-1}$.
Hence, for $x_1=a_1b_1$, $x_2=a_2b_2 \in G$, we have
$$
\bar x_1 \circ \bar x_2 \circ (x_1x_2)=a_1^{-1}a_2^{-1}a_1b_1a_2b_1^{-1}.
$$
The automorphism  $\lambda_x$,  $x=ab \in G$, is equal to the inner automorphism induced by  $b$, that is,
$$
\lambda_x (y)=b^{-1}yb,\quad y \in G.
$$
So, if $x \in A$, then $\lambda_x = id$.
Since  $A$ is a normal subgroup of  $G$,  for all $a_1, a_2 \in A$ and $ b_1 \in B $, we have
$$
a_1^{-1}a_2^{-1}a_1b_1a_2b_1^{-1} \in A.
$$
Hence $\bar x_1 \circ \bar x_2 \circ (x_1x_2) \in \Ker \lambda$, and therefore, by  Proposition \ref{symprop1} and Corollary \ref{symcor1},   $(G, \circ, \cdot)$ is a skew brace, which completes the proof.
\end{proof}

\medskip

We conclude this section with another application of exact factorization. Let $K$ be a tame knot  in 3-sphere $\mathbb{S}^3$ and $G(K) = \pi_1(\mathbb{S}^3 \setminus K)$ be the knot group. Then we prove

\begin{proposition}
There is a non-trivial skew brace on $G(K)$.
\end{proposition}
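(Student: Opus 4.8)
The plan is to obtain such a skew brace from an exact factorization of $G(K)$ coming from its abelianization, and then to invoke the exact-factorization construction described above (formula \eqref{efeqn1}). First, it is classical that for a knot $K$ one has $H_1(\mathbb{S}^3\setminus K)\cong\mathbb{Z}$, so $G(K)/G(K)'\cong\mathbb{Z}$. Fix a meridian $m$ of $K$; its image in $G(K)/G(K)'$ is a generator, hence $\langle m\rangle$ is infinite cyclic, $\langle m\rangle\cap G(K)'=1$, and $G(K)=G(K)'\langle m\rangle$. Thus, with $A:=G(K)'$ and $B:=\langle m\rangle$, the equality $G(K)=AB$ is an exact factorization.

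Applying the construction above to $G(K)=AB$, we get a skew brace $(G(K),\cdot,\circ)$ with $(G(K),\circ)\cong A\times B\cong G(K)'\times\mathbb{Z}$ (as $B$ is abelian, $B_{op}=B$), in which $\lambda_z$ is the inner automorphism of $G(K)$ induced by the $B$-component of $z$; in particular $\im\lambda=\langle\iota_m\rangle$ is cyclic, and by Proposition \ref{efprop1} this skew brace is in fact $\lambda$-cyclic. It remains to verify non-triviality, i.e.\ that `$\circ$' and `$\cdot$' differ. Since $\lambda_z$ is conjugation by the $B$-component of $z$, the skew brace is trivial if and only if $B=\langle m\rangle$ is contained in the centre of $G(K)$.

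I would rule this out using the standard geometric fact that filling the knot exterior along the meridian recovers $\mathbb{S}^3$; equivalently, by van Kampen, the normal closure $\langle\!\langle m\rangle\!\rangle$ of a meridian equals $G(K)$. If $m$ were central, then $\langle\!\langle m\rangle\!\rangle=\langle m\rangle\cong\mathbb{Z}$, which would force $G(K)\cong\mathbb{Z}$, i.e.\ $K$ the trivial knot. Hence for every non-trivial tame knot the skew brace constructed above is non-trivial. For the degenerate case of the unknot, $G(K)\cong\mathbb{Z}$ and the non-trivial skew brace $m\circ n=m+(-1)^{m}n$ exhibited earlier in this section already settles the statement.

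The only genuine obstacle is the last point — showing a meridian is not central — and this is exactly where knot-theoretic input enters: non-triviality of $K$ is detected by $G(K)\not\cong\mathbb{Z}$ (a classical consequence of the loop theorem/Dehn's lemma), and combined with the fact that a meridian normally generates $G(K)$ this immediately yields non-centrality. Everything else is a direct specialization of the exact-factorization machinery and the formula for $\lambda$ already established above.
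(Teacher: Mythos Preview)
Your proof is correct and follows the same route as the paper: exhibit the exact factorization $G(K)=G(K)'\cdot\langle m\rangle$ coming from $G(K)/G(K)'\cong\mathbb{Z}$ and apply the construction \eqref{efeqn1}. The paper's proof stops there, simply asserting that the resulting skew brace is non-trivial. You go further and actually justify this, observing that triviality would force the meridian to be central and hence $G(K)\cong\mathbb{Z}$, and then covering the unknot separately via the brace $m\circ n=m+(-1)^m n$ on $\mathbb{Z}$. This extra care is warranted: for the unknot the exact-factorization construction does yield the trivial brace (since $A=G(K)'=1$), so the paper's argument as written has a small gap there which your case split fills.
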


\begin{proof}
Set $G = G(K)$. Since $G / G' \cong \mathbb{Z}$,  there exists  a factorization $G = A B$ with $A=G'$ and  $B = \langle x \rangle \cong \mathbb{Z}$, where $x$ is a meridian of the knot $K$. Then
$G = A B$ with $A \cap B = 1$. Hence, as explained above, we can define an operation $\circ$ on $G$ such that $(G, \cdot, \circ)$ is a non-trivial skew brace.
\end{proof}

\bigskip

\section{Some other constructions of skew braces} \label{const}

In this section we construct skew braces on arbitrary groups under various conditions. We start with a specific construction of two-sides braces.

It is well known that if $(A, +, \circ)$ is a finite brace, then the multiplicative group $(A, \circ)$ is solvable. Nasybullov \cite{Nas} proved that this is not true in general.  Moreover, a two sided brace was constructed in \cite{CSV19} whose multiplicative group  contains a non-abelian free group. Here we give a similar example.
Let us first recall construction of adjoint operation, which was used by Mal'cev \cite{Ma} and later by Sysak,  Rump and others (see \cite{A, R2007}).

Let $R$ be an associative ring, not necessarily with a unit element. The set of all elements of $R$ forms a semigroup $R^{ad}$, with neutral element $0 \in R$, under the operation
$$
a \circ b = a + b + ab~ \mbox{for all}~ a, b \in R,
 $$
which is called the {\it adjoint semigroup} of $R$. The group of all invertible elements of $R^{ad}$ is called the {\it adjoint group} of $R$ and is denoted by $R^0$. If $R$ has a unity 1, then $1 + R^{ad}$ coincides with the multiplicative semigroup $R^{mult}$ of $R$ and the map $r \mapsto 1 + r$ with $r \in R$ is an isomorphism from $R^{ad}$ onto $R^{mult}$.

A ring $R$ is called {\it radical} if $R = R^0$, which means that $R$ coincides with its Jacobson radical. Obviously such a ring has no unit element.

\begin{proposition} \label{p7.1}
There is a two-sided brace $(A, +, \circ)$ such that its multiplicative group $(A,  \circ)$ contains a non-abelian free subgroup.
\end{proposition}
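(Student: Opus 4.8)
The plan is to produce $(A,+,\circ)$ as the two-sided brace associated to a suitable \emph{radical ring}. Recall from the discussion above that if $R$ is a ring with $R = R^0$, then $(R,+,\circ)$ with $a\circ b = a + b + ab$ is a two-sided brace whose multiplicative group $(R,\circ)$ is exactly the adjoint group $R^0$; this is Rump's correspondence between radical rings and two-sided braces, see \cite{R2007}. Hence it is enough to exhibit one radical ring whose adjoint group contains a non-abelian free subgroup.

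First I would let $\mathcal{R} = \mathbb{Q}\langle\langle x_1,x_2\rangle\rangle$ be the ring of formal power series in two non-commuting indeterminates over $\mathbb{Q}$, let $\mathfrak{m}\normal\mathcal{R}$ be the two-sided ideal of series with zero constant term, and set $R := \mathfrak{m}$, viewed as a ring without unit. I would then verify that $R$ is radical: for $a\in\mathfrak{m}$ we have $a^n\in\mathfrak{m}^n$, so the series $\sum_{n\ge 0}(-1)^n a^n$ converges $\mathfrak{m}$-adically to $(1+a)^{-1}$ in $\mathcal{R}$; consequently $b := \sum_{n\ge 1}(-1)^n a^n$ again lies in $\mathfrak{m}$ and satisfies $a\circ b = b\circ a = 0$, so $a\in R^0$. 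Thus $R = R^0$ and $(R,+,\circ)$ is a two-sided brace. By construction the map $a\mapsto 1+a$ is an isomorphism from $(R,\circ)$ onto the group $1+\mathfrak{m}$ of units of $\mathcal{R}$ with constant term $1$.

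Second I would invoke the classical theorem of Magnus that $1+x_1$ and $1+x_2$ generate a free group of rank $2$ inside the group of units of $\mathcal{R}$; equivalently, the homomorphism $F_2 = \gen{g_1,g_2}\to 1+\mathfrak{m}$ sending $g_i\mapsto 1+x_i$ (so $g_i^{-1}\mapsto \sum_{n\ge 0}(-1)^n x_i^n$) is injective. Composing this embedding with the isomorphism $(R,\circ)\cong 1+\mathfrak{m}$ of the previous step exhibits a non-abelian free subgroup of $(R,\circ)$, which completes the proof.

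The only step carrying real content is the injectivity of the Magnus map, which is precisely Magnus's theorem (and is where I would either cite the literature or, for a self-contained treatment, observe that pulling the filtration $\{\mathfrak{m}^k\}_{k\ge 1}$ of $\mathcal{R}$ back along $g_i\mapsto 1+x_i$ recovers the lower central series of $F_2$, together with $\bigcap_k \gamma_k(F_2)=1$, so that no non-trivial reduced word in $g_1,g_2$ can map to $1$). Everything else — that $\mathcal{R}$ is local with maximal ideal $\mathfrak{m}$, that $\circ$ is well defined on $\mathfrak{m}$, and the brace axioms — is routine and handled by the cited correspondence.
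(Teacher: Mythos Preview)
Your proof is correct and follows essentially the same route as the paper: take the augmentation ideal of a non-commutative formal power series ring, equip it with the adjoint operation $a\circ b = a+b+ab$ to obtain a two-sided brace, and invoke the Magnus embedding to exhibit a free subgroup of the adjoint group. The only cosmetic differences are that the paper works over $\mathbb{Z}$ with $n$ variables rather than over $\mathbb{Q}$ with two, and verifies the brace axioms directly instead of citing Rump's radical-ring correspondence; neither affects the substance.
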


\begin{proof}
Let $\mathbb{Z}\langle\langle X_1, X_2, \ldots, X_n \rangle\rangle$ be the ring of formal power series with non-commutative variables $X_1, X_2, \ldots, X_n$ over $\mathbb{Z}$. Let $B$ be the two-sided ideal of $\mathbb{Z}\langle\langle X_1, X_2, \ldots, X_n \rangle\rangle$, which is generated by $X_1, X_2, \ldots, X_n$. Define the following binary operation on $B$ the operation on $B$:
$$
a \circ b = ab + a + b,~~a, b \in B.
$$
We claim that $(B, +, \circ)$  is a brace. Indeed, it is evident that $(B, +)$ is an abelian group. Let us prove that $(B, \circ)$ is a group. Notice that the operation `$\circ$' is associative. Since
$$
0 \circ a = a \circ 0 = a,~~a \in B,
$$
$0$ is the identity element of $(B, \circ)$. For $a \in B$, we  are now going to find an element $y \in B$ such that $a \circ y = y \circ a = 0$. By the definition of `$\circ$', we have
$$
a \circ y = a y + a + y.
$$
If $a \circ y = 0$, then
$$
y = - a (1 + a)^{-1}  = - a + a^2 - a^3 + \cdots.
$$
Notice  that the element in the right side of the preceding equation  is well defined and lies in $B$. Hence $y$ is the right inverse of $a$. Also, it is easy to check that this element is the left inverse to $a$. Hence, $(B, \circ)$ is a group. A routine check now shows that $(B, +, \circ)$ is a two sided brace.

As we know there exists the Magnus embedding
$$
\mu : F_n \to \mathbb{Z}\langle\langle X_1, X_2, \ldots, X_n \rangle\rangle,
$$
which is defined on the generators and their inverses by the rule
$$
\mu(x_i) = 1 + X_i,~~~\mu(x_i^{-1}) = 1 - X_i + X_i^2 - \cdots.
$$
Then the group $\mu(F_n)$ is isomorphic to subgroup of $(B, \circ)$, which is generated by $X_1, X_2, \ldots, X_n$. Hence, $(B, \circ)$ contains a free non-abelian subgroup.
\end{proof}

A natural question is
\begin{qns}
Is it true that if the additive group of a brace is finitely generated, then the multiplicative group is solvable?
\end{qns}

\bigskip

We now construct braces on abelian groups. We prove

\begin{proposition}
Let $(A, + )$ be an abelian group,  $B$  its subgroup of index  2 and
$$
A=B \sqcup (a_0 + B)
$$
for some  $a_0 \in A$.  Then $(A, + , \circ)$ is a brace, where the operation `$\circ$' is dined by
$$
a \circ b =
\left\{
\begin{array}{ll}
a + b, \quad a \in B, \;\; b \in A,\\
a - b, \quad   a \not\in B,\;\; b \in A. &
\end{array}
\right.
$$
Moreover,  $(A, \circ)$ is isomorphic to the semi-direct product
$B \rtimes \mathbb{Z}_2$.
\end{proposition}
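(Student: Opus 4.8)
The plan is to exhibit $(A,+,\circ)$ as the skew brace produced by Theorem \ref{t1} from a suitable homomorphism $\lambda : (A,+) \to \Aut(A,+)$, and then to read off the structure of $(A,\circ)$ directly from the coset decomposition $A = B \sqcup (a_0+B)$. Concretely, I would set $\lambda_a = \mathrm{id}$ for $a \in B$ and $\lambda_a = -\mathrm{id}$ for $a \notin B$, where $-\mathrm{id} : b \mapsto -b$ is an automorphism of $(A,+)$ since the latter is abelian. Because $[A:B]=2$, the assignment $\lambda$ is constant on cosets of $B$ and factors through $A/B \cong \mathbb{Z}_2$, the nontrivial coset mapping to $-\mathrm{id}$; checking the cases ``$a,b$ in the same coset'' and ``$a,b$ in different cosets'' shows that $\lambda$ is a group homomorphism $(A,+) \to \Aut(A,+)$ with $\Ker\lambda = B$.

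Next I would verify the criterion \eqref{inc} of Theorem \ref{t1}, namely $[A,\lambda(A)] = \{-b+\lambda_a(b) \mid a,b \in A\} \subseteq \Ker\lambda = B$. If $a \in B$ then $-b+\lambda_a(b) = 0 \in B$; if $a \notin B$ then $-b+\lambda_a(b) = -2b$, which lies in $B$ because $2A \subseteq B$ (the quotient $A/B$ has order $2$, hence is killed by $2$). By Theorem \ref{t1}, $H_\lambda = \{(\lambda_a,a) \mid a \in A\}$ is therefore a regular subgroup of $\Hol(A,+)$, and $(A,+,\circ)$ is a skew brace with $a \circ b = a + \lambda_a(b)$. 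Substituting the two values of $\lambda_a$ gives exactly $a\circ b = a+b$ for $a \in B$ and $a \circ b = a-b$ for $a \notin B$, which is the stated operation; and since $(A,+)$ is abelian, $(A,+,\circ)$ is a brace.

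It remains to identify $(A,\circ)$. On $B$ the operations $\circ$ and $+$ agree, so $(B,\circ) = (B,+)$ is a subgroup of $(A,\circ)$, and it is normal because $B = \Ker\lambda$ for the homomorphism $\lambda : (A,\circ) \to \Aut(A,+)$. Fixing $a_0 \notin B$, one computes $a_0 \circ a_0 = a_0 - a_0 = 0$, so $\{0,a_0\}$ is a subgroup of $(A,\circ)$ of order $2$ with $\{0,a_0\} \cap B = \{0\}$ and $\bar a_0 = a_0$; moreover $a_0 \circ b = a_0 - b$ ranges over $a_0+B$ as $b$ ranges over $B$, so $(A,\circ) = B \sqcup (a_0 \circ B)$ and $B$ has index $2$. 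Hence $(A,\circ) = B \rtimes \langle a_0 \rangle \cong B \rtimes \mathbb{Z}_2$, and the conjugation action of the generator is $a_0 \circ b \circ \bar a_0 = (a_0 - b) \circ a_0 = (a_0-b) - a_0 = -b$ for $b \in B$ (the last step using $a_0 - b \notin B$), i.e.\ $\mathbb{Z}_2$ acts on $B$ by inversion. The only steps requiring genuine care are the homomorphism bookkeeping for $\lambda$ and this last conjugation computation; the whole argument hinges on the single observation $2A \subseteq B$, which is precisely what the index-$2$ hypothesis supplies.
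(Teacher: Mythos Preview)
Your proof is correct and follows essentially the same approach as the paper: define $\lambda$ via the inversion automorphism on the nontrivial coset, invoke Theorem~\ref{t1}, and then read off the semidirect product structure from $a_0 \circ a_0 = 0$ together with the conjugation computation. Your write-up is in fact more careful than the paper's, which simply asserts that ``a straightforward calculation shows $b^{-1}\lambda_a(b) \in \Ker\lambda$'' without isolating the key point $2A \subseteq B$ that you make explicit.
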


\begin{proof}
Let $( A, +)$ be an abelian group and  $\theta$
 an the inversion automorphism, that is,  $\theta (a)=-a$ for all $a \in A$.
Let us define a map $\lambda : (A, +) \to \Aut (A, +)$ by $a \mapsto \lambda_a$ such that
$$
\lambda_a =
\left\{
\begin{array}{ll}
id, \quad a \in B, \\
\theta, \quad  \  a \not\in B. &
\end{array}
\right.
$$
Notice that $\lambda$ is a homomorphism. A straightforward calculation shows that $b^{-1}\lambda_a(b) \in \Ker \lambda$ for all $a, b \in A$. Hence, by Theorem \ref{t1}, it follows that $H_{\lambda}$ is a normal subgroup of $\Hol (A, +)$. Hence $(A, + , \circ)$ is a brace, where `$\circ$' is as given in the statement.

Notice that $(A, \circ)$ is, in general, not an abelian group and $a_0 \circ a_0 = 0$, the identity element of $(A, +)$. Thus $\bar a_0 = a_0$, and therefore $\bar a_0 \circ b \circ a_0 = -b$ for all $b \in B$. Since `$\circ$' and `+' coincide on $B$, it follows that $(A, \circ) = B \rtimes \mathbb{Z}_2$.
\end{proof}

\medskip

A natural question which arises here is
\begin{qns}
Let $B_1$ and $B_2$ be subgroups of index  2 in $(A, +)$.
Under what conditions the braces constructed in the preceding result are isomorphic?
\end{qns}

It is not difficult to see that if $B_1$ and $B_2$ are not isomorphic, then the braces are not isomorphic. In particular, let $A = \mathbb{Z}_2 \times \mathbb{Z}_4$, where $\mathbb{Z}_2$ is generated by some element $a$ and  $\mathbb{Z}_4$ is generated by some element $b$. Put $B_1 = \langle b \rangle = \mathbb{Z}_4$ and $B_2 = \langle a \rangle \times \langle b^2 \rangle = \mathbb{Z}_2 \times \mathbb{Z}_2$. It is evident that these groups define non-isomorphic braces.

One may ask
\begin{qns} Is it possible to construct a skew brace whose additive group is an arbitrary group admitting  a subgroup of index 2?
\end{qns}

\noindent{\it Acknowledgements.}
The first and second named authors acknowledge the support from the Russian Science Foundation (project No. 19-41-02005). The third named author  acknowledges the support of DST-RSF Grant INT/RUS/RSF/P-2.

\end{document}